\numberwithin{equation}{section}
\numberwithin{table}{section} 
\newcommand{\hsp}[1]{{\hbox{\hspace{#1}}}}
\newcounter{letcnt1} % letter section
\newcounter{letcnt2} % letter subsection
\def\b{\beta}
\def\d{\delta}
\def\e{\varepsilon}
\def\z{\zeta}
\def\s{\sigma}
\def\tAd{\mathrm{Ad}} 
\def\tAut{\mathrm{Aut}}
\def\sfa{\mathsf{a}}
\def\bC{\mathbb{C}} 
\def\cC{\mathcal{C}}
\def\tdet{\mathrm{det}}
\def\tdim{\mathrm{dim}}
\def\tEnd{\mathrm{End}}
\def\cF{\mathcal{F}} 
\def\sF{\mathscr{F}}
\def\bfG{\mathbf{G}}
\def\tGr{\mathrm{Gr}}
\def\fg{\mathfrak{g}}
\def\bi{\mathbf{i}}
\def\sfk{\mathsf{k}}
\def\fm{\mathfrak{m}}
\def\sfm{\mathsf{m}}
\def\sfn{\mathsf{n}}
\def\cO{\mathcal{O}}
\def\bQ{\mathbb{Q}} 
\def\bR{\mathbb{R}}
\def\tRe{\mathrm{Re}}
\def\sS{\mathscr{S}}
\def\tSL{\mathrm{SL}}
\def\tStab{\mathrm{Stab}}
\def\fsl{\mathfrak{sl}}
\def\tspan{\mathrm{span}}
\def\cU{\mathcal{U}}
\def\sfw{\mathsf{w}}
\def\sX{\mathscr{X}}
\def\bZ{\mathbb{Z}}
\def\half{\tfrac{1}{2}}
\def\tand{\quad\hbox{and}\quad}
\def\bs{\backslash}
\def\smallb{{\hbox{\small{$\bullet$}}}}
\def\inj{\hookrightarrow}
\def\sur{\twoheadrightarrow}
\def\op{\oplus}
\def\ot{\otimes}
\def\tw{\hbox{\small $\bigwedge$}}
\newtheorem{corollary}[equation]{Corollary}
\newtheorem{lemma}[equation]{Lemma}
\newtheorem{theorem}[equation]{Theorem}
\newtheorem{conjecture}[equation]{Conjecture}
\newtheorem*{mainthm*}{Main Theorem}
\newtheorem*{theorem*}{Theorem}
\newtheorem*{corollary*}{Corollary}
\newtheorem*{lemma*}{Lemma}
\theoremstyle{definition}
\newtheorem*{boldQ*}{Question}
\newtheorem*{boldP*}{Problem}
\theoremstyle{definition}
\theoremstyle{remark}
\newtheorem*{definition*}{Definition}
\newtheorem{example}[equation]{Example}
\newtheorem*{example*}{Example}
\newtheorem{remark}[equation]{Remark}
\newtheorem*{emphQ*}{Question}
\newenvironment{a.list}
  {\begin{enumerate}[label=\alph*.,itemsep=3pt,leftmargin=25pt,listparindent=20pt]}
  {\end{enumerate}}
\newenvironment{num.list}
  {
  \begin{enumerate}[itemsep=3pt,leftmargin=25pt,listparindent=20pt,label={\arabic*.}]
  }
  {\end{enumerate}}
\newenvironment{i_list}
  {\begin{enumerate}[label=(\roman*),itemsep=3pt,leftmargin=25pt,listparindent=20pt]}
  {\end{enumerate}}
\newenvironment{I_list}
  {\begin{enumerate}[label=(\Roman*),itemsep=3pt,leftmargin=25pt,listparindent=20pt]}
  {\end{enumerate}}
\newenvironment{i_list_emph}
  {\begin{enumerate}[label=\emph{(\roman*)},itemsep=3pt,leftmargin=25pt,listparindent=20pt]}
  {\end{enumerate}}
\def\bs{\backslash}
\def\ddb{\partial\overline{\partial}}
\def\sfd{\mathsf{d}}
\def\bfh{\mathbf{h}}
\def\sfh{\mathsf{h}}
\def\sfq{\mathsf{q}}
\def\fs{\mathfrak{s}}
\def\fz{\mathfrak{z}}
\def\olB{\overline B}
\def\GsD{\Gamma\bs D}
\def\tPhi{\tilde\Phi}
\def\PhiS{\Phi_\mathrm{SBB}}
\def\hPhiS{\Phi_\mathrm{SBB}'}
\def\hatP{\overline{\wp}{}'}
\def\olP{\overline{\wp}}
\begin{document}
%----------------------------------------------------------
\title{Extension of Hodge norms at infinity}

%\author[Green]{Mark Green}
%\email{mlg@math.ucla.edu}
%\address{UCLA Mathematics Department, Box 951555, Los Angeles, CA 90095-1555}

%\author[Griffiths]{Phillip Griffiths}
%\email{pg@math.ias.edu}
%\address{Institute for Advanced Study, 1 Einstein Drive, Princeton, NJ 08540}
%\address{University of Miami, Department of Mathematics, 1365 Memorial Drive, Ungar 515, Coral Gables, FL  33146}

\author[Robles]{Colleen Robles}
\email{robles@math.duke.edu}
\address{Mathematics Department, Duke University, Box 90320, Durham, NC  27708-0320} 
\thanks{The research leading to these results received funding from the National Science Foundation (DMS 1906352), and from the European Union's Horizon 2020 research and innovation programme under the Marie Sk\l{}odowska-Curie grant agreement No 754340.}

\date{\today}

\begin{abstract}
It is a long-standing problem in Hodge theory to generalize the Satake--Baily--Borel compactification of a locally Hermitian symmetric space to arbitrary period maps.  A proper \emph{topological} Satake--Baily-Borel type completion has been constructed, and the problem of showing that the construction is \emph{algebraic} has been reduced to showing that the compact fibres $A$ of the completion admit neighborhoods $X$ satisfying certain properties.  All but one of those properties has been established; the outstanding problem is to show that holomorphic functions on certain divisors $Y \subset X$ ``at infinity'' extend to $X$.  Extension theorems of this type require that the complex manifold $X$ be pseudoconvex; that is, admit a plurisubharmonic exhaustion function.  The neighborhood $X$ is stratified, and the strata admit Hodge norms which are may be used to produce plurisubharmonic functions on the strata.  One would like to extend these norms to $X$ so that they may be used to construct  the desired plurisubharmonic exhaustion of $X$.  The purpose of this paper is show that there exists a function that \emph{simultaneously} extends all the Hodge norms along the strata that intersect the fibre $A$ nontrivially.
\end{abstract}
\keywords{period map, variation of (mixed) Hodge structure}
\subjclass[2010]
{
 14D07, 32G20, % Variation of Hodge structures.
 %32S35, % Mixed Hodge theory of singular varieties
 58A14. % Hodge theory (under global analysis).
}
\maketitle
%----------------------------------------------------------

%----------------------------------------------------------
\section{Introduction}
%----------------------------------------------------------

Suppose that $D$ is a Mumford--Tate domain parameterizing pure, effective, weight $\sfn$, $Q$--polarized Hodge structures on a finite dimensional rational vector space $V$.  Fix a period map $\Phi : B \to \GsD$ defined on a smooth quasi-projective $B$ with smooth projective completion $\olB \supset B$ and simple normal crossing divisor $Z = \olB \bs B$ at infinity.  Let $\wp = \Phi(B) \subset \Gamma \bs D$ denote the image.  A proper \emph{topological} Satake--Baily--Borel (SBB) type completion $\PhiS : \olB \ \to \ \olP$ of $\Phi$ is constructed in \cite{GGR-part1}.  Let $Z_1,\ldots,Z_\nu$ denote the smooth irreducible components of $Z$, and $Z_I = \cap_{i\in I}\, Z_i$ the closed strata.  By the nilpotent orbit theorem \cite{MR0382272}, the period map $\Phi$ asymptotically induces a  period map $\Phi_I : Z_I^* \to \Gamma_I\bs D_I$ along the open strata $Z_I^* = Z_I \bs \cup_{j\not\in I}Z_j$.  Set $Z_\emptyset^* = B$, so that $\Phi_\emptyset = \Phi$.  The topological compactification $\olP$ of $\wp$ is the disjoint union of the images $\wp_I = \Phi_I(Z_I^*)$ modulo a certain equivalence relation that accounts for the fact that the period map $\Phi_I$ may extend to some $Z_J^* \subset Z_I$.  (When the extension exists, it coincides with $\Phi_J$.  See \cite[\S3]{GGR-part1} for details.)  With these identifications, $\wp_I \inj \olP$, the image $\olP$ is a finite union of quasi-projective varieties, and $\PhiS$ is continuous and proper.  The fibres of $\PhiS$ are algebraic subvarieties of $\olB$.  

One would like to assert that $\olP$ is itself projective algebraic.\footnote{After this work was completed, Bakker--Filipazzi--Mauri--Tsimerman exhibited the projective structure and extension $\Phi_\mathrm{SBB} : \olB \to \olP$, \cite{BFMT}.}  This is known to be the case when $D$ is hermitian and $\Gamma$ is arithmetic: $\olP$ is the closure of $\wp$ in the Satake--Baily--Borel compactification of $\Gamma\bs D$.  In general it is an open problem to show that $\olP$ is a complex analytic space.  The latter would imply Conjecture \ref{conj:ggr} below.

The completion $\PhiS$ admits a ``Stein factorization''
\[
\begin{tikzcd}
  \olB \arrow[r,"\hPhiS"]
  & \hatP \arrow[r]
  & \olP \,.
\end{tikzcd}
\]
The fibres of $\hatP \to \olP$ are finite, and the fibres of $\hPhiS$ are connected, compact algebraic subvarieties of $\olB$.  

\begin{conjecture}[{\cite{GGR-part1}}] \label{conj:ggr}
The topological space $\hatP$ is projective algebraic, and the map $\hPhiS : \olB \to \hatP$ is a morphism.  
\end{conjecture}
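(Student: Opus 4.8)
\emph{The strategy.}
The plan is to reduce Conjecture \ref{conj:ggr} to a local statement about neighborhoods of the compact fibres of $\hPhiS$, and then to solve that local statement by manufacturing a plurisubharmonic exhaustion out of the Hodge norms along the strata. For the reduction, recall from \cite{GGRinfty} that it is enough to show that each fibre $A = (\hPhiS)^{-1}(a) \subset \olB$ admits an open neighborhood $X$ carrying a proper holomorphic map onto a complex space that contracts precisely $A$: Grauert's contraction theorem then (by \cite{GGRinfty}) endows $\hatP$ with an analytic structure for which $\hPhiS$ is a morphism, and Moishezon-ness follows because $\hatP$ is compact and dominated by the projective variety $\olB$ via a map whose generic fibre is connected and projective, so its algebraic dimension equals its complex dimension. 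By \cite{GGRinfty} every hypothesis needed to apply Grauert's criterion to $(X,A)$ has been checked save one, namely that holomorphic functions on the ``infinity divisors'' $Y = X \cap Z_I$ (for the strata $Z_I$ that meet $A$) extend holomorphically to $X$. For \emph{that} extension it suffices to exhibit a plurisubharmonic exhaustion $\varphi$ of $X$, since pseudoconvexity, together with an $L^2$ extension theorem of Ohsawa--Takegoshi type (or Andreotti--Grauert vanishing for the ideal sheaf of $Y$), then supplies the extension. So the whole conjecture comes down to constructing such a $\varphi$.

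\emph{Building blocks.}
Along each open stratum $Z_I^*$ the nilpotent orbit theorem \cite{MR0382272} yields the induced period map $\Phi_I : Z_I^* \to \Gamma_I \bs D_I$, and a flat multivalued section of the underlying local system has a Hodge norm $h_I$ on $Z_I^*$ whose logarithm is plurisubharmonic and which grows like a product of powers of $-\log|t_i|$ in local coordinates $t_i$ cutting out the $Z_i$ --- in particular tending to $+\infty$ as one approaches the deeper strata at infinity. These $h_I$ are the natural pieces of $\varphi$ relative to the stratification $X = \bigsqcup_I (X \cap Z_I^*)$. The obstruction is that they live on different strata and, by the several-variable asymptotics of a period map, blow up (or decay) at a priori incompatible rates, so one can neither sum them nor take a pointwise maximum to obtain a single function on $X$. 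The main theorem of this paper removes exactly this obstruction: on a neighborhood of $A$ in $X$ there is a \emph{single} function $\Psi$ that, up to bounded correction terms, simultaneously restricts to each $h_I$ along every stratum $Z_I^*$ meeting $A$ nontrivially.

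\emph{Assembly and the main obstacle.}
Granting $\Psi$, one takes $\varphi = \chi(\Psi) + \e\,\psi$, where $\chi$ is a rapidly increasing convex function and $\psi$ is a local potential for a \Kahler\ form on $\olB$, chosen so that $\e\,\psi$ absorbs the bounded error terms and makes the complex Hessian strictly positive transverse to the strata; properness of $\varphi$ comes from $h_I \to +\infty$ at infinity. The real work is the construction of $\Psi$. To carry it out one must run the several-variable $\tSL_2$-orbit theory simultaneously along a maximal chain of nested strata $Z_{I_1}^* \supset \cdots \supset Z_{I_k}^*$ meeting $A$, compare the relative monodromy weight filtrations (and limiting mixed Hodge structures) of adjacent strata, and verify that the exponents appearing in the norm estimates for the various $h_{I_j}$ are mutually consistent, so that a fixed product of powers of the local defining equations of the $Z_i$ times one controlled bounded factor interpolates all of them at once. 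Pinning down the combinatorics of these exponents is the technical heart of the matter; with that in hand, writing $\Psi$ and checking that $\varphi$ is a plurisubharmonic exhaustion should be comparatively routine.
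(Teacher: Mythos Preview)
This statement is a \emph{conjecture} in the paper, not a theorem; the paper does not prove it in general. What the paper does is (i) recall the reduction from \cite{GGRinfty} to the two local properties \ref{i:proper} and \ref{i:extn}, (ii) formulate Conjecture \ref{conj:pseudocnvx} (pseudoconvexity of $X$) as a route to \ref{i:extn}, and (iii) prove Theorem \ref{T:h}, the simultaneous extension of the Hodge norms $h_I$ to a single continuous $h$ on $X$. Your strategy tracks this outline faithfully through the reduction and the identification of the building blocks.

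The genuine gap is in your ``Assembly'' paragraph. You assert that the hard part is constructing the simultaneous extension $\Psi$, and that once it exists, ``writing $\Psi$ and checking that $\varphi$ is a plurisubharmonic exhaustion should be comparatively routine.'' The paper establishes exactly the opposite: the extension $h$ (your $\Psi$) \emph{is} constructed in full generality (Theorem \ref{T:h}), but the paper explicitly warns that $-\log h$ need \emph{not} be plurisubharmonic on $X$ when $D$ is not hermitian (see the Remark following Theorem \ref{T:h} and \cite{Robles-pseudocnvx-eg}). Plurisubharmonicity is only verified in the hermitian case (Theorem \ref{T:herm}) and in one nontrivial non-classical example (Theorem \ref{T:pseudocnvx}); in general it remains open, and this is precisely why Conjecture \ref{conj:pseudocnvx}, and hence Conjecture \ref{conj:ggr}, is still a conjecture. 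Your correction term $\e\,\psi$ with $\psi$ a local \Kahler\ potential cannot repair this: $\psi$ is bounded in both directions on a relatively compact chart, so adding it neither forces properness nor controls the sign of $\ddb$ near $Z$, where the failure of psh for $-\log h$ occurs and where the Levi form can genuinely acquire a negative eigenvalue in directions transverse to the strata. In short, you have located the right objects but inverted the difficulty: the extension exists; what is missing, and what the paper leaves as work in progress, is turning it into a psh exhaustion.
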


\noindent The conjecture holds in the case that $D$ is hermitian symmetric, and in the case that $\tdim\,\wp \le 2$, \cite{GGLR}.  The proof of Conjecture \ref{conj:ggr} has been reduced to showing that every (compact, connected) fibre $A$ of $\hPhiS$ admits a neighborhood $X \subset \olB$ with the following properties \cite[\S8]{GGR-part1}:
\begin{I_list}
\item \label{i:proper}
The restriction of $\hPhiS$ to $X$ is proper.
\item \label{i:extn}
Holomorphic functions on $Z_I \cap X$ extend to $X$.
\end{I_list}

\noindent Neighborhoods satisfying the first property \ref{i:proper} exist by Theorem \ref{T:ggr}.  Let $\cF^{\sfn} \subset \cF^{\sfn-1} \subset \cdots \subset \cF^0$ denote the Hodge vector bundles over $B$.  Assume that the local monodromy at infinity is unipotent.  Then the $\cF^p$ extend to $\olB$, \cite{MR1416353}.  Set $\sfk = \lceil (\sfn+1)/2 \rceil$.  Define
\begin{equation}\label{E:Lambda}
  \Lambda \ = \ \left\{
    \begin{array}{ll}
    \tdet(\cF^\sfn) \,=\, \tdet(\cF^1) \,,\quad & \sfn=1 \,,\\
    \tdet(\cF^\sfn) \ot \tdet(\cF^{\sfn-1}) \ot \cdots \ot 
    	\tdet(\cF^\sfk) \,,\quad & 
    \sfn  \hbox{ even,} \\
    \left[\tdet(\cF^\sfn) \ot \tdet(\cF^{\sfn-1}) 
    \ot \cdots \ot \tdet(\cF^{\sfk+1}) \right]^{\ot2} \, 
    	\ot \tdet(\cF^\sfk)\, \,, \  &
    \sfn \ge 3 \hbox{ odd.}
    \end{array}
  \right.
\end{equation}

\begin{theorem}[{\cite{GGR-part1}}] \label{T:ggr}
There exists a positive integer $\sfa \ge 1$ so that every fibre $A$ of $\hPhiS$ admits a neighborhood $X \subset \olB$ with the following properties: 
\begin{i_list_emph}
\item 
The restriction of $\hPhiS$ to $X$ is proper.
\item 
The line bundle $\Lambda^{\ot \sfa}$ is trivial over $X$.
\end{i_list_emph}
\end{theorem}

The second property \ref{i:extn} is an Ohsawa--Takegoshi type extension problem (although without the need for bounds on the $L^2$ norms) \cite{MR1782659, MR3525916}.  Such theorems usually impose the hypothesis that $X$ is pseudoconvex.

%----------------------------------------------------------
\subsection{Pseudoconvexity}
%----------------------------------------------------------

Recall that the neighborhood $X$ is \emph{pseudoconvex} if it admits a plurisubharmonic exhaustion function $\rho : X \to [-\infty,\infty)$.  A continuous function $\rho: X \to \bR$ is an \emph{exhaustion} if $\rho^{-1}[-\infty,r)$ is relatively compact for all $r \in \bR$.  The function is \emph{plurisubharmonic} (psh) if for every holomorphic map $\psi : \Delta \to X$, the composition $\rho \circ \psi$ is subharmonic.  If $\rho$ is $\cC^2$, then it is psh if and only if $\bi\ddb \rho \ge 0$.  For example, if $f \in \cO(X)$, then $\rho = |f|^2$ is psh.  Likewise, a line bundle with metric $h$ is positive if $-\log  h$ is psh.  Oka's Theorem asserts that a complex manifold is Stein if and only if it admits a smooth strictly psh exhaustion function.  

\begin{conjecture} \label{conj:pseudocnvx}
The neighborhood $X$ in Theorem \ref{T:ggr} may be chosen to be pseudoconvex.  There is a continuous exhaustion function $\rho : X \to [0,\infty)$ with the property that $\ddb \rho( v , \overline v) \ge 0$, and equality holds if and only if $v$ is tangent to a fibre of $\PhiS$.\footnote{This statement must be interpreted with some care as, in general, $\rho$ will be $\cC^1$, but not $\cC^2$.  Then the inequality $\ddb(v,\overline v) \rho \ge0$ of Conjecture \ref{conj:pseudocnvx} should be understood to allow $\ddb\rho(v,\overline v) =+\infty$.  The latter may arise when $v$ is normal $Z$, cf.~\cite{Robles-pseudocnvx-eg}.  In this case $\bi\ddb\rho$ is a positive current.  We expect that the restrictions $\bi\ddb\rho\big|_{Z_I^*}$  are both well-defined and smooth.}
\end{conjecture}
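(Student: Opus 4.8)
\medskip

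\noindent\textbf{Strategy for Conjecture~\ref{conj:pseudocnvx}.}
The plan is to assemble $\rho$ out of the Hodge norms carried by the strata $Z_I^*$ that meet $A$, and to reduce the positivity of $\bi\ddb\rho$ to the curvature of the Hodge bundles together with the asymptotic norm estimates of Cattani--Kaplan--Schmid. The neighborhood $X$ will be the one provided by Theorem~\ref{T:ggr}, possibly shrunk.

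First, pin down the Hodge norm on a stratum. The Hodge line bundle $\lambda$ --- a suitable positive tensor power of $\bigotimes_p\tdet(\cF^p)$ --- carries, by Griffiths' curvature formula, a Hodge metric whose curvature is semipositive and is positive in the directions detected by the period map; and by Theorem~\ref{T:ggr}\ref{i:triv}, after passing to a further power, $\lambda$ is trivial over $X$, so we fix a nowhere-vanishing holomorphic section $s$ of it there. On each open stratum the nilpotent orbit theorem~\cite{MR0382272} yields $\Phi_I : Z_I^* \to \Gamma_I\backslash D_I$; its variation equips $\lambda|_{Z_I^*\cap X}$ with a Hodge metric $h_I$, and we set $N_I := -\log\|s\|^2_{h_I}$. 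Then $\bi\ddb N_I = \Phi_I^*\Omega_I$ for a semipositive $(1,1)$-form $\Omega_I$ whose pulled-back kernel is exactly the set of vectors tangent to the fibres of $\Phi_I$ --- equivalently, by \cite{GGRinfty}, to the fibres of $\PhiS$ through points of $Z_I^*$; and near the deeper strata each $N_I$ develops the logarithmic-type singularity envisaged in the footnote, with $\bi\ddb N_I$ blowing up in the corresponding normal directions.

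The main step is the simultaneous extension: produce a single continuous $F : X \to \bR$ with $F|_{Z_I^*\cap X} = N_I$ for every $I$ with $Z_I^*\cap A \neq\emptyset$. I would attempt this by induction on the depth $|I|$, starting from $N_\emptyset$ on $B\cap X$ and, at each step, subtracting off the $\log\log$-type singularity along the newly added boundary component that is prescribed by the several-variable $\tSL_2$-orbit theorem; the delicate point is that the extension produced across a single boundary component must be compatible, simultaneously, with the Hodge norms on every deeper stratum. Granting $F$, one obtains $\rho$ by a standard convexification together with the adjunction of an auxiliary plurisubharmonic term that makes the sublevel sets relatively compact --- built from local equations of the boundary divisors and the fact that $\hPhiS|_X$ is already proper (Theorem~\ref{T:ggr}, first conclusion) --- these operations being chosen to preserve both plurisubharmonicity and the constancy of $\rho$ along the fibres of $\PhiS$.

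The hard part, and the reason the statement is still a conjecture, is to verify $\ddb\rho(v,\overline v)\ge 0$ with equality \emph{exactly} for $v$ tangent to a fibre of $\PhiS$, \emph{at points of the deeper strata}. On an open stratum this is the identity $\bi\ddb N_I = \Phi_I^*\Omega_I$ recorded above. Near a deeper stratum, however, $F$ is a difference of two comparable quantities: the pullback $\Phi^*$ of a positive $(1,1)$-form that degenerates in the monodromy (normal) directions, minus the $\log\log$ corrections introduced to keep $F$ continuous; by the norm estimates both are of size $\asymp |t_i|^{-2}(\log|t_i|)^{-2}$ in the normal directions, so $\bi\ddb F$ is not manifestly signed. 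Showing that the positive contribution wins --- so that $\bi\ddb\rho$ is a positive current, permitted to equal $+\infty$ in directions normal to $Z$ with well-defined smooth restrictions to the $Z_I^*$ as in the footnote, and with degeneracy locus precisely the fibres of $\PhiS$ --- requires the sharp form of the several-variable $\tSL_2$-orbit theorem with its error terms, and is the principal obstacle; the simultaneous extension established here is a prerequisite for that analysis.
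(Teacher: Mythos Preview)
The statement is a conjecture, so neither you nor the paper offers a complete proof; both present a strategy.  Your broad outline---extend the stratum Hodge norms to a single function on $X$, then analyze positivity via curvature and asymptotic estimates---matches the paper's approach in \S\ref{S:prf-pcnvx}.  But the two diverge in substance at two places.

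First, the simultaneous extension.  You propose to build $F$ by induction on $|I|$, subtracting $\log\log$ corrections dictated by the several-variable $\mathrm{SL}_2$--orbit theorem.  The paper instead writes down an explicit closed-form extension: working in the induced representation $H$ (chosen so that $h^{\sfn,0}=1$), it sets $h = \mathrm{Re}\,Q(\eta_0,\overline{\lambda\eta_\infty})$, where $\eta_0$ frames $F^\sfn$ and $\eta_\infty$ is a distinguished second frame vector determined by the weight filtration along $A$ (Theorem~\ref{T:h}).  Continuity across strata follows from an elementary isotropy argument for the relative weight filtrations (Lemma~\ref{L:Q0}), not from CKS asymptotics.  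This is both more concrete and avoids the compatibility problem you flag (``the extension produced across a single boundary component must be compatible, simultaneously, with the Hodge norms on every deeper stratum'').

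Second, and more seriously: you tacitly assume that once $F$ is in hand, $\bi\ddb F$ will be a positive current after sufficiently sharp estimates.  The paper records that this is \emph{false} in general: $-\log h$ need not be plurisubharmonic on $X$ (Remark after Theorem~\ref{T:h}, with an explicit counterexample in \cite{Robles-pseudocnvx-eg}).  The restriction of $-\log h$ to each stratum $Z_I^*$ meeting $A$ \emph{is} psh (Corollary~\ref{C:hI}), but positivity in directions transverse to the stratification can genuinely fail for the extension alone.  This is why the paper's scheme is a two-term sum $\rho_0 + \rho_1$: a first function $\rho_0$ built from the Hodge metric on $B\cap X$ (vanishing along $Z$) supplies the transverse positivity, and $\rho_1$ (built from the extension $h$) supplies positivity along the strata.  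Your single-$F$ formulation, even with an auxiliary term added only for exhaustion, misses this decomposition and would run into the non-psh examples.
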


\noindent In \S\ref{S:triv} we will show that the conjecture holds in three cases: when $D$ is hermitian symmetric; when $A \subset B$; and when $A$ is a connected component of $Z$.   The purpose of this note is to discuss how the conjecture might be approached in general (\S\ref{S:prf-pcnvx}), and to establish a key result in that direction (extension of Hodge norms, Theorem \ref{T:h}).  This result is used elsewhere to prove the conjecture in the following simple, but nontrivial, case.

\begin{theorem}[{\cite{Robles-pseudocnvx-eg}}]\label{T:pseudocnvx}
Suppose that the Mumford--Tate domain $D$ parameterizes weight $\sfn=2$, effective, polarized Hodge structures with $p_g = h^{2,0} = 2$.  Assume that the fibre $A$ is contained in a codimension 1 strata $Z_i^*$.  Then Conjecture \ref{conj:pseudocnvx} holds.
\end{theorem}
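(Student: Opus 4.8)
The plan is to use Theorem~\ref{T:h} to renormalise the Hodge metric so that it extends across the boundary divisor, correct it in the normal direction, and then turn the resulting plurisubharmonic function into an exhaustion by passing to a sublevel set. First I would apply Theorem~\ref{T:ggr} to fix a neighbourhood $X'$ of $A$ on which $\hPhiS$ is proper and a power $\tdet(\cF^2)^{\ot m}$ of the Hodge line bundle is trivial; after shrinking we may assume $X' \cap Z = X' \cap Z_i = Y$ is a smooth divisor cut out by a local coordinate $t$, so that $X' \bs Y = X' \cap B$ is the open stratum and $Z_i^*$ is the only stratum meeting $A$. Since $\sfw = 2$ we have $N^3 = 0$ for $N = N_i = \log T_i$, and because $h^{2,0} = 2$ — the case $h^{2,0}=1$ is Hermitian symmetric and already covered in \S\ref{S:triv} — the Hodge numbers of the limiting mixed Hodge structure leave only a short list of model degenerations, to which the nilpotent orbit theorem reduces the analysis near $Y$; if $N = 0$ the period map extends holomorphically over $Z_i$ and one is in the interior situation of \S\ref{S:triv}, so assume $N \ne 0$.

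Let $\mathbf{s}$ be a trivialising frame of $\tdet(\cF^2)^{\ot m}$ over $X'$ and $h$ its Hodge metric. On $X' \cap B$ the function $-\log\|\mathbf{s}\|_h^2$ is plurisubharmonic, $\bi\ddb(-\log\|\mathbf{s}\|_h^2) = \bi\Theta(h) \ge 0$, and by Griffiths $\ddb(-\log\|\mathbf{s}\|_h^2)(v,\olv) = 0$ exactly when $v$ is tangent to a fibre of $\Phi$; Schmid's norm estimates give $\|\mathbf{s}\|_h^2 \sim (-\log|t|)^{e}$ near $Y$ for an explicit integer $e \ge 0$ read off from the limiting mixed Hodge structure. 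Theorem~\ref{T:h} then furnishes a continuous plurisubharmonic function $\Psi$ on $X'$ equal to $-\log\|\mathbf{s}\|_h^2 + e\log(-\log|t|^2)$ on the open stratum, restricting along $Z_i^* \cap X'$ to the Hodge norm of $\Phi_i$, and with $\bi\ddb\Psi \ge 0$ as a current — $\ddb\Psi(v,\olv)$ being allowed to be $+\infty$ for $v$ normal to $Y$, since the logarithmic correction exactly absorbs the Poincaré-type blow-up of $\bi\Theta(h)$ along $Y$. I then set $\rho_1 := \Psi + |t|^2$.

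The key point to verify is that $\ddb\rho_1(v,\olv) = 0$ if and only if $v$ is tangent to a fibre of $\PhiS$. If $v$ is not tangent to $Y$ then $\ddb|t|^2(v,\olv) > 0$, so $\ddb\rho_1(v,\olv) > 0$; and since $A \subset Y$ such a $v$ is not tangent to the fibre $A$, which is consistent. If $v$ is tangent to $Y$, then $\bi\ddb\rho_1|_Y$ is the curvature of the Hodge norm of $\Phi_i$, hence $\ge 0$ with equality precisely along the fibre directions of $\Phi_i$ — which are the $\PhiS$-fibre directions inside $Z_i^*$. The remaining, most delicate case is $v$ tangent to the open stratum and near $Y$: one must check that the correction $e\log(-\log|t|^2)$ does not overshoot, i.e.\ that the normal part of $\bi\Theta(h)$ dominates $\tfrac{e}{|t|^2(\log|t|^2)^2}\,\bi\,dt\wedge d\olt$ near $Y$ and that the only residual degeneracy of $\bi\ddb\rho_1$ there is along $\Phi$-fibres. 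This estimate is what the $\tSL_2$-orbit theorem (Cattani--Kaplan--Schmid) yields in the weight $2$, $h^{2,0}=2$ models, and I expect it to be the main obstacle — the restricted list of models being exactly what makes the computation feasible.

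Granting this, $\bi\ddb\rho_1 \ge 0$ and its restriction to each (compact, connected) fibre of $\hPhiS$ vanishes, so $\rho_1$ is constant along those fibres; subtracting a constant we may assume $\rho_1 \ge 0$ on a neighbourhood of $A$ in $X'$. Using the properness of $\hPhiS$ and the simplicity of the boundary here — only $X' \cap B$ and $Z_i^* \cap X'$ are involved — one chooses $\epsilon > 0$ so that $X := \{\rho_1 < \epsilon\}$ is a relatively compact neighbourhood of $A$, shrinking further if necessary to keep $\hPhiS|_X$ proper. Then $\rho := (\epsilon - \rho_1)^{-1} - \epsilon^{-1}$ is a continuous exhaustion $X \to [0,\infty)$ with $\bi\ddb\rho = (\epsilon-\rho_1)^{-2}\,\bi\ddb\rho_1 + 2(\epsilon-\rho_1)^{-3}\,\bi\,\del\rho_1 \wedge \dbar\rho_1 \ge 0$, and $\ddb\rho(v,\olv) = 0$ iff $\ddb\rho_1(v,\olv) = 0$ and $\del\rho_1(v) = 0$; as $\rho_1$ is constant along fibres, this holds exactly when $v$ is tangent to a fibre of $\PhiS$, which is Conjecture~\ref{conj:pseudocnvx} in this case. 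Besides the curvature estimate above, the only remaining point is that $\{\rho_1 < \epsilon\}$ is genuinely relatively compact — that $\rho_1$ separates $A$ from the neighbouring fibres — which once more rests on the properness of $\hPhiS$ and the mildness of the boundary.
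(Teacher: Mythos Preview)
This paper does not actually prove Theorem~\ref{T:pseudocnvx}; the theorem is stated with a citation to the companion paper \cite{Robles-pseudocnvx-eg}, and only the scaffolding of \S\ref{S:prf-pcnvx} is offered here. So there is no proof in this paper to compare against directly; what follows evaluates your proposal on its own terms and against that outline.

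There is a genuine misreading of Theorem~\ref{T:h}. The function $h$ constructed there is, on $B\cap X$, given by $\tRe\,Q(\eta_0,\overline{\lambda\eta_\infty})$ (see \eqref{E:tildeh}), \emph{not} by the Hodge norm $\|\mathbf{s}\|^2$ of a trivialization of $\Lambda^{\otimes m}$; these are different functions---for one thing, $h$ stays bounded as $t\to0$ while the Hodge norm grows like a power of $-\log|t|$. More importantly, Theorem~\ref{T:h} asserts that $-\log h$ is plurisubharmonic \emph{only} on the strata $Z_I^*$ meeting $A$, and the Remark immediately following the theorem says explicitly that $-\log h$ need not be psh on $X$ in general, with a pointer to \cite{Robles-pseudocnvx-eg} for the failure. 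So your sentence ``Theorem~\ref{T:h} then furnishes a continuous plurisubharmonic function $\Psi$ on $X'$ equal to $-\log\|\mathbf{s}\|_h^2 + e\log(-\log|t|^2)$\ldots'' attributes to the theorem both a formula and a conclusion it does not provide.

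Your candidate $\Psi=-\log h_0 + e\log(-\log|t|^2)$ is therefore a \emph{different} function from the $-\log h$ of this paper, and establishing its plurisubharmonicity is exactly the borderline estimate you flag: since $\bi\,\ddb\log(-\log|t|^2)<0$, you need the normal curvature of $h_0$ to dominate $e\,|t|^{-2}(\log|t|^2)^{-2}\,\bi\,\td t\wedge \td\olt$, and the leading Schmid asymptotic gives only coefficient $\sim e$, so the inequality lives in the subleading terms. This is the heart of the matter and nothing here supplies it. Two smaller points: your correction term $|t|^2$ treats a \emph{local} coordinate as a global defining section of $Y$, which you need for the fibrewise-constancy and relative-compactness steps; and the outline of \S\ref{S:prf-pcnvx} is structurally different---there one writes $\rho_0+\rho_1$ with $\rho_0$ a separately constructed psh function on $X$ vanishing along $Z$ (two candidates are described in \cite{Robles-pseudocnvx-eg}) and $\rho_1$ built from the $h$ of Theorem~\ref{T:h}, with neither summand individually psh and the content of the companion paper being that their sum is.
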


\begin{remark}[Strict psh] 
The conjectural exhaustion function $\rho : X \to \bR$ will be the $\hPhiS$--pullback of a continuous function $\varrho$ on $\sX = \hPhiS(X) \subset \hatP$.  The assertion that $\ddb \rho( v , \overline v) \ge 0$, with equality precisely when $v$ is tangent to a fibre of $\PhiS$, should be interpreted as saying that $\varrho$ is a \emph{strictly} psh function on $\sX$.  This is ``interpretative'' because the topological space $\hatP$ is not yet shown to be complex analytic.  However, the space $\hatP$ is a finite union $\cup\,\hatP_\pi$ of complex analytic spaces, and the restriction $\varrho\big|_{\hatP_\pi}$ is strictly psh.
\end{remark}

\begin{remark}[Pseudoconvexity in Hodge theory]
Griffiths and Schmid showed that $D$ admits a smooth exhaustion function whose Levi form, restricted to the horizontal subbundle of the holomorphic tangent bundle, is positive definite at every point \cite[(8.1)]{MR0259958}.  In particular, the image of the lift $\tilde\Phi : \tilde B \to D$ to the universal cover of $B$ admits a strict psh exhaustion function.  
\end{remark}

%----------------------------------------------------------
\subsection{Discussion of Conjecture \ref{conj:pseudocnvx}} \label{S:triv}
%----------------------------------------------------------

Let $\Lambda \to \olB$ be the line bundled defined in \eqref{E:Lambda}.  Theorem \ref{T:ggr} implies $\Lambda^{\ot \sfa}$ is trivial over $X$ for some positive integer $\sfa\ge1$.

%----------------------------------------------------------
\subsubsection{Proof of Conjecture \ref{conj:pseudocnvx} when $D$ is hermitian}
%----------------------------------------------------------

It follows from \cite{MR0216035} and the triviality of $\Lambda^{\ot \sfa}\big|_X$ that there exist holomorphic functions $g_1 , \ldots, g_\mu : X \to \bC$ that separate the fibres of $\PhiS\big|_{X}$ and with the property that $V(g_1,\ldots,g_\mu) = A$.  Set $f = \sum |g_j|^2$.  Given a sufficiently small $\e>0$, and shrinking $X$ if necessary, we may assume that $X = \{ x \in X \ | \ f(x) < \e \}$.  Then $\rho = (\e - f)^{-1}$ is the desired psh function. 
\hfill\qed

%----------------------------------------------------------
\subsubsection{Proof of Conjecture \ref{conj:pseudocnvx} when $A \subset B$}
%----------------------------------------------------------

We may assume without loss of generality that $X \subset B$.  Then it follows from \cite[Theorem 6.14]{MR4557401} and the triviality of $\Lambda^{\ot \sfa}\big|_X$ that there exist holomorphic functions $g_1 , \ldots, g_\mu : X \to \bC$ that separate the fibres of $\PhiS\big|_{X}$.  Without loss of generality these functions have the property that $V(g_1,\ldots,g_\mu) = A$.  Now the argument above goes through verbatim.
\hfill\qed

%----------------------------------------------------------
\subsubsection{Proof of Conjecture \ref{conj:pseudocnvx} when $A$ is a connected component of $Z$} \label{S:pg}
%----------------------------------------------------------

Again it follows from \cite[Theorem 6.14]{MR4557401} and the triviality of $\Lambda^{\ot \sfa}\big|_X$ that there exist holomorphic functions $g_1 , \ldots, g_\mu : X \to \bC$ that separate the fibres of $\Phi\big|_{B\cap X}$ and with the property that $V(g_1,\ldots,g_\mu) = Z$.  \hfill\qed

\begin{proof}[A second proof in this case]
Let $h_0^\sfa$ be the Hodge norm-squared of a trivialization of $\Lambda^{\ot \sfa}\big|_{B\cap X}$.  Then $\rho = 1/h_0$ is a psh function, and the restriction $\rho \big|_{B \cap X}$ satisfies Conjecture \ref{conj:pseudocnvx}: we have $\ddb \rho( v , \overline v) \ge 0$, with equality if and only if $v$ is tangent to a fibre of $\Phi$, \cite{MR0259958}.  

The restriction $\rho\big|_{Z \cap X}$ vanishes identically.  So $\rho : X \to [0,\infty)$ is an exhaustion function if and only if $X \cap Z$ is compact.  Finally, $\rho\big|_{Z \cap X}$ will satisfy Conjecture \ref{conj:pseudocnvx} if and only if $A = X \cap Z$.
\end{proof}

\begin{proof}[A third proof in this case]
P.~Griffiths has pointed out that, if we allow $\rho$ to take value in $[-\infty,\infty)$, then $-\log h_0$ also yields a psh exhaustion with the  desired properties.
\end{proof}

%----------------------------------------------------------
\subsubsection{An approach to Conjecture \ref{conj:pseudocnvx} in the general case} \label{S:prf-pcnvx}
%----------------------------------------------------------

There are (at least) two possibilities for a continuous psh function $\rho_0 : X \to [0,\infty)$ with the property that the restriction $\rho_0 \big|_{B \cap X}$ satisfies Conjecture \ref{conj:pseudocnvx}: we have $\ddb \rho_0( v , \overline v) \ge 0$, with equality if and only if $v \in T(B\cap X)$ is tangent to a fibre of $\Phi$, cf.~\cite{Robles-pseudocnvx-eg}.  In both cases $\rho_0$ vanishes along $Z \cap X$, and so in general will not be an exhaustion function.  We need a second function $\rho_1 : X \to \bR$ with the following properties:
\begin{i_list}
\item
The restriction $\rho_1\big|_{Z \cap X}$ is psh.
\item \label{i:psh}
The sum $\rho_0 + \rho_1 : X \to \bR$ is psh.  In fact, $\ddb (\rho_0+\rho_1)( v , \overline v) \ge 0$, with equality precisely when $v$ is tangent to a fibre of $\PhiS$.
\item \label{i:A}
We have $\rho_0 + \rho_1 \ge 0$, and the fibre is characterized by 
\[
	A \ = \ \{ \rho_0 + \rho_1 \ = \ 0 \} \,.
\]
\end{i_list}
Then for sufficiently small $\e>0$ we may take $X = \{ \rho_0+\rho_1 < \e\}$ and $\rho = 1/(\e-\rho_0-\rho_1)$.

A natural source of psh functions on $Z_I^* \cap X$ are the $-\log h_I$ with $h_I^\sfa$ the Hodge norm-squared of trivialization  of $\Lambda^{\ot \sfa}\big|_X$.  The main result of this paper (Theorem \ref{T:h}) is the \emph{simultaneous} extension to $X$ of all the  $h_I$ with $Z_I^* \cap A$ non-empty.  And this extension does indeed yield a psh exhaustion of $X$, as outlined above, at least in two cases:  (i)  If $D$ is hermitian, then the extension is psh on $X$ (Theorem \ref{T:herm}).  (ii) The non-classical (non-hermitian) example of Theorem \ref{T:pseudocnvx}.  It is work in  progress to fully generalize these two examples.

%----------------------------------------------------------
\subsection{Extension of Hodge norms} \label{S:extnh-intro}
%----------------------------------------------------------

There is a Hodge metric associated to each $\Lambda^{\ot \sfa}\big|_{Z_I^*}$ that is canonically defined up to a positive multiple (\S\ref{S:hI}).  Fix a trivialization of $\Lambda^{\ot \sfa}\big|_X$ and let $h_I^\sfa : Z_I^* \cap X \to \bR_{>0}$ be the Hodge norm-squared of the trivialization.  Then $-\log h_I$ is a smooth psh function on $Z_I^* \cap X$.

\begin{theorem} \label{T:h}
The neighborhood $X$ of Theorem \ref{T:ggr} may be chosen so that it admits a continuous function $h : X \to \bR$ that is smooth on strata $Z_I^* \cap X$ \emph{(}including $B \cap X$\emph{)}, constant on $\hPhiS$--fibres, and has the following property: if $Z_I^* \cap A$ is nonempty, then the restriction of $h$ to $Z_I^*$ is a multiple of the $h_I$.  In particular the restriction of $-\log h$ to $Z_I^*$ is plurisubharmonic.
\end{theorem}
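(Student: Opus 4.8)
The plan is to obtain $h$ from the Hodge norm $h_\emptyset$ of the chosen trivialization on $B\cap X$ by dividing out, in a globally coherent way, the logarithmic degeneration that $h_\emptyset$ acquires along the boundary strata. First I would reduce to the case that $\Lambda$ itself, and not merely a power, is trivial over $X$: by Theorem~\ref{T:ggr}\emph{\ref{i:triv}} some $\Lambda^{\otimes m}$ is trivial, and replacing $\Lambda$ by $\Lambda^{\otimes m}$ and each $h_I$ by $h_I^{\otimes m}$ affects none of the asserted conclusions, since taking an $m$-th root at the end preserves continuity, smoothness along strata, constancy on $\hPhiS$-fibres, and plurisubharmonicity of the logarithm. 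So fix a holomorphic trivialization $\sigma$ of $\Lambda|_X$ and set $h_\emptyset=\|\sigma\|^2$ on $B\cap X$ and $h_I=\|\sigma|_{Z_I^*}\|^2$ on $Z_I^*\cap X$.

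Next I would bring in the several-variable norm estimates of Schmid \cite{MR0382272} and of Cattani--Kaplan--Schmid. In local coordinates $(t_1,\dots,t_n)$ adapted to a point of $Z_I^*\cap X$, with $Z_i=\{t_i=0\}$ for $i\in I$, these give an expansion $h_\emptyset(t)=\bigl(\prod_{i\in I}\ell_i^{\,\mu_i^{(I)}}\bigr)\,\kappa_I(t)$, where $\ell_i:=-\log|t_i|^2$, the $\mu_i^{(I)}\in\bZ_{\ge 0}$ are the weights of the line $\Lambda$ with respect to the relevant relative monodromy weight filtrations, and $\kappa_I$ extends continuously to $Z_I^*$, smoothly along each stratum, with $\kappa_I|_{Z_I^*}$ a positive constant multiple of $h_I$. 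Each locally defined $\ell_i$ differs by a smooth bounded term from the global function $-\log|s_i|^2_{\bfh_i}$ attached to a defining section $s_i$ of $\cO_{\olB}(Z_i)$ and a fixed smooth metric $\bfh_i$, and I would use these global substitutes, rescaled to exceed $1$ on $X$, throughout.

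The crux, and the point at which the hypothesis that $A$ is a fibre of $\hPhiS$ is used, is the following Key Lemma: if $I\subseteq J$ and both $Z_I^*\cap A$ and $Z_J^*\cap A$ are nonempty, then $\mu_j^{(I)}=0$ for every $j\in J\setminus I$. The idea is that $\hPhiS$ is constant on $A$, so the limiting mixed Hodge structure — in particular the position of the Hodge line $\Lambda$ in the successive monodromy weight filtrations — does not vary as one moves within $A$ from the deeper stratum $Z_J^*$ out to the shallower stratum $Z_I^*$; hence un-degenerating the directions $j\in J\setminus I$ along $A$ contributes no new logarithmic growth to the Hodge norm of $\Lambda$. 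I expect extracting this vanishing of relative weights from the structure theory of the fibres in \cite{GGRinfty}, combined with the Cattani--Kaplan--Schmid description of the nested weight filtrations, to be the main obstacle; the rest is bookkeeping around it. Granting the lemma, after shrinking $X$ around $A$ and dividing $h_\emptyset$ by a suitable product $\prod_i\ell_i^{-\mu_i}$ of global logarithmic substitutes — indexed by the boundary components meeting $A$ along a shallowest stratum, with $\mu_i$ the corresponding weight — one gets a function that has no further logarithmic growth along any stratum meeting $A$, hence extends continuously, and smoothly on each stratum, to $\bigcup_{I:\,Z_I^*\cap A\neq\emptyset}(Z_I^*\cap X)$, restricting there to a positive constant multiple of each $h_I$.

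Finally I would assemble $h$ and verify the remaining requirements. For constancy on $\hPhiS$-fibres: on a fibre $F\subseteq B\cap X$ the period map is constant, so the curvature of the Hodge metric on $\Lambda|_F$ vanishes and $-\log h_\emptyset|_F$ is pluriharmonic on the compact $F$, hence constant; likewise $h_I$ is constant on fibres inside $Z_I^*$. Thus the function built above is fibre-constant on the strata where it has been prescribed, and I would realize the global $h$ as the $\hPhiS$-pullback of a continuous function on $\sX=\hPhiS(X)$ that extends the already-defined function on the image of the strata meeting $A$ — this extension exists by a Tietze-type argument, $\sX$ being a finite union of quasi-projective varieties, after shrinking $X$ within Theorem~\ref{T:ggr} so that the strata meeting $X$ are controlled. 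Smoothness along $B\cap X$ and along each $Z_I^*\cap X$ is then immediate: on a stratum meeting $A$ it is $c_I h_I$ with $c_I>0$, and elsewhere it is built from $h_\emptyset$ and the smooth functions $\ell_i$. Since $h|_{Z_I^*}=c_I h_I$ whenever $Z_I^*\cap A\neq\emptyset$, we get $-\log h|_{Z_I^*}=-\log c_I-\log h_I$, which is plurisubharmonic because $-\log h_I$ is, as recalled in \S\ref{S:extnh-intro}. This completes the proof modulo the Key Lemma.
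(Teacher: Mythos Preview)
Your approach is genuinely different from the paper's, and it carries a real gap: you cannot get $\hPhiS$--fibre-constancy and smoothness on $B\cap X$ at the same time.  The candidate $h_\emptyset\cdot\prod_i\ell_i^{-\mu_i}$ is smooth on $B\cap X$, but the global substitutes $\ell_i=-\log|s_i|^2_{\bfh_i}$ depend on an arbitrary hermitian metric on $\cO_{\olB}(Z_i)$ and have no reason whatsoever to be constant along $\Phi$--fibres; since $h_\emptyset$ \emph{is} fibre-constant, the quotient is not.  Your fallback---pull back a Tietze extension from $\sX=\hPhiS(X)$---does restore fibre-constancy, but a Tietze extension of a continuous function is only continuous, so you lose the required smoothness on $B\cap X$ (and on every stratum $Z_I^*$ with $Z_I^*\cap A=\emptyset$).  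The final paragraph of your proposal invokes both constructions as if they produced the same function; they do not, and neither one alone satisfies the theorem.  The ``division by logarithms'' has to be carried out by a mechanism intrinsic to the period map, not by auxiliary smooth data on $\olB$.

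The paper avoids this tension by not starting from $h_\emptyset$ at all.  Using the period-matrix framing over $X$ it singles out, besides the frame $\eta_0$ of $\cF^\sfn$, a second section $\eta_\infty=\eta\cdot e_\infty$, where $e_\infty$ spans the one-dimensional piece $H^{2\sfn-\sfm,0}_{W,F}$ of the Deligne splitting of the limiting mixed Hodge structure along $A$; then one sets $h=\tRe\,Q(\eta_0,\overline{\lambda\,\eta_\infty})$ on $B\cap X$.  Because this is built entirely from period data it is automatically smooth \emph{and} constant on $\hPhiS$--fibres.  The role your Key Lemma would play---killing the logarithmic divergences---is taken instead by a weight-filtration isotropy argument (Lemma~\ref{L:Q0}, with Lemma~\ref{L:m} supplying the position of $e_0,e_\infty$ in the weight filtrations of the faces of the monodromy cone): since $e_\infty$ already sits at the bottom of every relevant weight filtration, pairing against $\overline{\eta_\infty}$ annihilates the $\ell(t_j)$--polynomial terms in the nilpotent-orbit expansion of $\eta_0$, and $Q(\eta_0,\overline{\eta_\infty})$ extends continuously across $Z$ and restricts to a positive multiple of $h_I$ on each stratum meeting $A$.
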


\noindent  The theorem is proved in \S\ref{S:prfTh}.

\begin{remark}
If the Mumford--Tate domain $D$ is hermitian, then $h$ is smooth and $-\log h$ is psh (Theorem \ref{T:herm}).  In general, $-\log h$ need not be psh \cite{Robles-pseudocnvx-eg}.  And smoothness is expected to fail when the hypotheses of Theorem \ref{T:sm} do not hold.
\end{remark}

%----------------------------------------------------------
\section{Preliminaries and review} \label{S:prelim}
%----------------------------------------------------------

The construction of $h$ will utilize the period matrix representation of an induced variation of Hodge structure; the latter is introduced in \S\ref{S:induced}, and the former is reviewed in \S\ref{S:pmr}.  The fibre $A$ is characterized in \S\ref{S:A}.

Set
\[
  G_\bR \,=\, \tAut(D) \,\subset\, \tAut(V_\bR,Q)
  \tand 
  G_\bC \,=\, \tAut(\check D) \,\subset\, \tAut(V_\bC,Q) \,.
\]

%----------------------------------------------------------
\subsection{Induced Hodge structure} \label{S:induced}
%----------------------------------------------------------

Let $\sfd_p = \tdim_\bC\,F^pV_\bC$ denote the dimensions of the Hodge filtration $F \in \check D$.  Any (pure, effective, polarized) Hodge structure $F \in D$ on $V$ naturally induces one on
\[
  H \ = \ 
  \left\{ \begin{array}{ll}
  \tw^{\sfd_\sfn}V \,=\, \tw^{\sfd_1}V \,,\quad &
  \sfn = 1 \,,\\
  (\tw^{\sfd_\sfn}V) \,\ot\,
  (\tw^{\sfd_{\sfn-1}} V) \,\ot \cdots \ot\,
  (\tw^{\sfd_\sfk} V) \,,\quad & \sfn \hbox{ even,}\\
  \left[(\tw^{\sfd_\sfn}V) \,\ot\,
  (\tw^{\sfd_{\sfn-1}} V) \,\ot \cdots \ot\,
  (\tw^{\sfd_{\sfk+1}} V)\right]^{\ot 2} 
  \ot (\tw^{\sfd_\sfk}V) \,,\quad & \sfn \ge 3 \hbox{ odd.}
  \end{array}\right.
\]
(Cf.~\cite[\S5]{GGR-part1}.)  We continue to denote the polarization by $Q$.  Let $\sfw$ denote the weight of the induced Hodge structure.  While the Hodge structure on $H$ is effective, if $\sfn\ge3$, then it will be the case that $h^{\sfw,0} = 0$.  Let $\sfq \ge0$ be the smallest integer such that $h^{\sfw-\sfq,\sfq}$ is nonzero.  Replacing $H$ with $H\ot\bQ(\sfq)$, and $\sfw$ with $\sfw-2\sfq$, we may assume that the pure, effective, polarized Hodge structure on $H$ satisfies $h^{\sfw,0}=1$.

From this point forward we will view the period map $\Phi$ as parameterizing pure, effective, weight $\sfw$, $Q$--polarized Hodge structures on $H$.

\begin{remark}[Relationship to $\Lambda$]
There is a tautological filtration  
\begin{equation}\label{E:F(H)}
  0 \,\not=\, \cF^\sfw(H_\bC) \,\subset\, \cF^{\sfw-1}(H_\bC) 
  \,\subset \cdots \subset\, 
  \cF^0(H_\bC) \ = \ \check D \times H_\bC
\end{equation}
of the trivial bundle.  The bundle $\cF^\sfw(H_\bC)$ has rank one, and the fibre over $F \in \check D$ is 
\[
  L \ = \ 
  \left\{ \begin{array}{ll}
  \tdet(F^\sfn V_\bC) \,=\, \tdet(F^1 V_\bC) \,,\quad & 
  \sfn = 1\,,\\
  \tdet(F^\sfn V_\bC) \,\ot\, \tdet(F^{\sfn-1} V_\bC) 
  \,\ot\cdots \ot\, \tdet(F^\sfk V_\bC) \,,\quad &
  \sfn \hbox{ even,} \\
  \left[\tdet(F^\sfn V_\bC) 
  \,\ot\cdots \ot\, \tdet(F^{\sfk+1} V_\bC)\right]^{\ot 2}
  \ot \tdet(F^\sfk V_\bC) \,,\quad &
  \sfn \ge 3 \hbox{ odd.}
  \end{array}\right.
\]
The bundles \eqref{E:F(H)} are $G_\bC$--homogeneous, and so all descend to $\Gamma \bs D$, and 
\[
  \Lambda\big|_B \ = \ \Phi^* \cF^\sfw(H_\bC) \,.
\]
\end{remark}

%----------------------------------------------------------
\subsection{Period matrix representation} \label{S:pmr}
%----------------------------------------------------------

The period map admits a period matrix representation over the neighborhood $X$ of Theorem \ref{T:ggr}.  This means we have the following structure.  (See \cite[\S4]{GGR-part1} for details.)   

%----------------------------------------------------------
\subsubsection{Monodromy about the fibre $A$}
%----------------------------------------------------------

Let $\pi_1(B \cap X) \sur \Gamma_X \subset \Gamma$ be the monodromy of the variation of Hodge structure over $B \cap X$.  Let $(W,F,\s)$ be any one of the limiting mixed Hodge structures arising along the fibre $A$.  The weight filtration $W$ is independent of this choice.  In general, both the Hodge filtration $F \in \check D$ and the nilpotent cone $\s$ of local monodromy logarithms depend on our choice, and are defined only up to the action of $\Gamma_X$.  (See Remark \ref{R:anotherchoice} for what we can say about other choices of $(W,F',\s')$ along $A$.)  Nonetheless, the limit
\[
  F_\infty \ = \ \lim_{y\to\infty} \exp(\bi y N) \cdot F
  \ \in \ \partial D
\]
is independent of our choice of both $(F,W,\s)$ and $N \in \s$.  (By convention the nilpotent cones here are nonzero and \emph{open}, so that $N \in \s$ is necessarily nonzero.)  Both $W$ and $F_\infty$ are invariant under the monodromy:
\begin{equation}\label{E:GammaX}
  \Gamma_X \ \subset \ \tStab_{G_\bR}(W) \,\cap\, 
  \tStab_{G_\bC}(F_\infty) \,, 
\end{equation}
where $G_\bR = \tAut(D)$ and $G_\bC = \tAut(\check D)$.

%----------------------------------------------------------
\subsubsection{Framing of the Hodge bundles} \label{S:framing}
%----------------------------------------------------------

Fix once and for all a limiting mixed Hodge structure $(W,F,\s)$ arising along the fibre $A$.  Choose a basis $\{e_0,e_1,\ldots,e_\sfd\}$ of $H_\bC$ so that $e_0$ spans $F^\sfw(H_\bC)$, $\{e_0,\ldots,e_{\sfd_{\sfw-1}}\}$ span $F^{\sfw-1}(H_\bC)$, and so on.  Since $F \in \check D$ is $Q$--isotropic (that is, satisfies the first Hodge--Riemann bilinear relation)
\[
  Q(F^p , F^q) \ = \ 0 \,,\qquad \forall \quad p+q < \sfw \,,
\]
we may assume that the basis also satisfies
\begin{equation}\label{E:Q}
	Q(e_i,e_j) \ = \ \d^\sfd_{i+j} \,,
	\qquad \forall \quad i \le \sfd/2 \,.
\end{equation}

Let $\cU \to B \cap X$ be the universal cover, and $\tPhi : \cU \to D$ the lift of the restricted period map $\Phi : B \cap X \to \Gamma_X \bs D$.  Then there exist holomorphic functions $z_j,z_{a,j},\ldots : \cU \to \bC$, so that 
\[
  \eta_0 \ = \ e_0 \ + \ \sum_{j=1}^\sfd z_j\,e_j
\]
frames $F^\sfw(\tPhi)$;  $\eta_0$ and 
\[
  \eta_a \ = \ e_a \ + \ \sum_{j=\sfd_{\sfw-1}+1}^\sfd z_{a,j}\,e_j 
  \,,\quad
  1 \le a \le \sfd_{\sfw-1}
\]
frame $F^{\sfw-1}(\tPhi)$; $\eta_0,\ldots\eta_{\sfd_{\sfw-1}}$ and 
\[
  \eta_a \ = \ e_a \ + \ \sum_{j=\sfd_{\sfw-2}+1}^\sfd z_{a,j}\,e_j 
  \,,\quad
  \sfd_{\sfw-1} + 1 \le a \le \sfd_{\sfw-2}
\]
frame $F^{\sfw-2}(\tPhi)$; and so on.  The framing $\{ \eta_0,\ldots , \eta_\sfd\}$ is the \emph{period matrix representation} of $\Phi$.  

We will sometimes treat the $z_j,z_{a,j},\ldots$ as holomorphic functions on $B \cap X \to \bC$ that are defined up to an action of the monodromy $\Gamma_X$.

%----------------------------------------------------------
\subsubsection{Schubert cell} \label{S:gpq}
%----------------------------------------------------------

Let $\sS \subset \check D$ be the open Schubert cell of filtrations $\tilde F\in \check D$ having generic intersection with $\overline{F_\infty}$.  The existence of a period matrix representation over $X$ is equivalent to the properties that (i) the action of $\Gamma_X$ on $\check D$ preserves $\sS$, and (ii) the lift $\tPhi$ of the period map takes value in $\sS \cap D$. Then the $z_j, z_{a,j} , \ldots$ are realized as the pullback of coordinates on $\sS$.

It will be convenient to have a description of this Schubert cell in terms of Deligne splittings, which we now review.  (See \cite{MR3701983} for details.)  The mixed Hodge structure $(W,F)$ determines a Deligne splitting
\[
  H_\bC \ = \ \bigoplus_{p,q\ge0} H^{p,q}_{W,F}
\]
that satisfies 
\begin{equation}\label{E:dsfilts}
  F^k \ = \ \bigoplus_{p\ge k}  H^{p,q}_{W,F} \,,\quad
  W_\ell \ = \ \bigoplus_{p+q\ge\ell}  H^{p,q}_{W,F} \tand
  F^k_\infty \ = \ \bigoplus_{q\le \sfw-k}  H^{p,q}_{W,F} \,,
\end{equation}
and
\[
  \overline{H^{p,q}_{W,F}} \ = \ H^{q,p}_{W,F}
  \quad\hbox{modulo}\quad 
  \bigoplus_{\substack{r<q \\ s<p}} H^{r,s}_{W,F} \,.
\]

Let $\fg_\bR$ and $\fg_\bC$ be the Lie algebras of $G_\bR$ and $G_\bC$ respectively.  The mixed Hodge structure $(W,F)$ on $H$ induces one on $\fg$.  Let 
\begin{equation}\label{E:gpq}
  \fg_\bC \ = \ \bigoplus_{p,q} \fg^{p,q}_{W,F}
\end{equation}
be the Deligne splitting.  The splitting satisfies
\begin{equation}\label{E:g(H)}
  \fg^{r,s}_{W,F}(H^{p,q}_{W,F}) \ \subset \ H^{p+r,q+s}_{W,F} \,,
\end{equation}
and respects the Lie bracket in the sense that 
\begin{equation}\label{E:adg}
  [ \fg^{p,q}_{W,F} \,,\, \fg^{r,s}_{W,F} ] 
  \ \subset \ \fg^{p+r,q+s}_{W,F} \,.
\end{equation}
The Lie algebra of $\tStab_{G_\bC}(F)$ is
\[
  \fs_F \ = \ \bigoplus_{p\ge0} \fg^{p,q}_{W,F} \,;
\]
the Lie algebra of $\tStab_{G_\bC}(W)$ is
\[
  \fs_W \ = \ \bigoplus_{p+q\le0}\fg^{p,q}_{W,F} \,;
\]
and the Lie algebra of $\tStab_{G_\bC}(F_\infty)$ is
\[
  \fs_\infty \ = \ \bigoplus_{q\le0} \fg^{p,q}_{W,F} \,.
\]
It follows from \eqref{E:GammaX} that the monodromy is contained in the Lie group
\begin{subequations}\label{SE:MX}
\begin{equation}\label{E:MX}
  \Gamma_X \ \subset \ M_X \ = \ \tStab_{G_\bC}(W) \,\cap\,
  \tStab_{G_\bC}(F_\infty) \,\cap\,\tStab_{G_\bC}(\overline{F_\infty})
\end{equation}
with Lie algebra 
\begin{equation}\label{E:mX}
   \fm_X \ = \ \bigoplus_{p,q\le0} \fg^{p,q}_{W,F} \,.
\end{equation}
\end{subequations}
Note that $M_X$ is defined over $\bR$.

The nilpotent algebra 
\begin{equation}\label{E:sFperp}
  \fs^\perp_F \ = \ \bigoplus_{p<0} \fg^{p,q}_{W,F}
\end{equation}
satisfies
\[
  \fg_\bC \ = \ \fs_F \ \op \ \fs_F^\perp \,.
\]
The exponential map $\fs^\perp_F \to \exp(\fs^\perp_F)$ is a biholomorphism.  The Schubert cell is the Zariski open
\[
    \sS \ = \ \exp(\fs^\perp_F) \cdot F \ \subset \ \check D \,;
\]
it is precisely the set of filtrations $\tilde F\in \check D$ having generic intersection with $\overline{F_\infty}$.  The maps $\fs^\perp_F \to \exp(\fs^\perp_F) \to \sS$ are biholomorphisms.  There is a holomorphic map 
\begin{equation}\label{E:eta}
	\eta : \cU \to \exp(\fs^\perp_F)
\end{equation}
such that 
\[
  \eta_j \ = \ \eta\cdot e_j \,;
\]
equivalently,
\[
  \tPhi \ = \ \eta \cdot F \,.
\]
Both $\eta$ and $\log \eta$ are equivalent to the framing $\{ \eta_j\}_{j=0}^\sfd$, and we will sometimes refer to both as the \emph{period matrix representation} of $\Phi$.

\begin{remark} \label{R:anotherchoice}
We have $\s \subset \fg^{-1,-1}_{W,F}$.  If $(W,F',\s')$ is any other limiting mixed Hodge structure arising along $A$, then 
\[
  F' \ \in \  \exp( \fm_X \cap \fs_F^\perp)
\]
and 
\[
  \s' \ \subset \ \bigoplus_{p,q\le-1} \fg^{p,q}_{W,F} \,.
\]
\end{remark}

%----------------------------------------------------------
\subsection{Definition of $e_\infty$ and $\sfm$} \label{S:einfty}
%----------------------------------------------------------

Since $\tdim\,F^\sfw(H_\bC) = 1$, there exists a unique $\sfw\le\sfm\le2\sfw$ so that
\[
  F^\sfw(H_\bC) \subset W_\sfm(H_\bC) 
  \tand 
  F^\sfw(H_\bC) \cap W_{\sfm-1}(H_\bC) = 0 \,.
\]
Symmetries in the limit mixed Hodge structure imply 
\begin{eqnarray*}
  \tdim\,W_{2\sfw-\sfm}(H_\bC) \cap F^{2\sfw-\sfm}(H_\bC) & = & 1\,,\\
  W_{2\sfw-\sfm-1}(H_\bC) \cap F^{2\sfw-\sfm}(H_\bC) & = & 0 \,,\\
  W_{2\sfw-\sfm}(H_\bC) \cap F^{2\sfw-\sfm+1}(H_\bC) & = & 0\,,
\end{eqnarray*}
and also that 
\[
  H_\bC \ = \ F^1(H_\bC) \ \op \ 
  \overline{W_{2\sfw-\sfm}(H_\bC) \cap F^{2\sfw-\sfm}(H_\bC)} \,.
\]
So we may assume that
\[
  e_\sfd \ \in \ 
  \overline{W_{2\sfw-\sfm}(H_\bC) \cap F^{2\sfw-\sfm}(H_\bC)} \,,
\] 
and $W_{2\sfw-\sfm}(H_\bC) \cap F^{2\sfw-\sfm}(H_\bC)$ is spanned by some $e_\infty \in \{e_0,e_1,\ldots,e_\sfd\}$.  (The reason for the subscript $\infty$ is discussed in Remark \ref{R:whyinfty}.)  To summarize, we have
\begin{eqnarray}
  \nonumber
  \tspan_\bC\,\{e_0\} & = & H^{\sfw,\sfm-\sfw}_{W,F} 
  \ = \ F^\sfw(H_\bC) \,,\\
  \label{E:H}
  \tspan_\bC\,\{e_\infty\} & = & H^{2\sfw-\sfm,0}_{W,F} 
  \ = \ W_{2\sfw-\sfm}(H_\bC) \cap F^{2\sfw-\sfm}(H_\bC)  \\
  \nonumber
  \tspan_\bC\,\{e_\sfd\} & = & H^{0,2\sfw-\sfm}_{W,F}(H_\bC)\,.
\end{eqnarray}

In order to get a feel for the vectors $e_0$ and $e_\infty$, and  the integer $\sfm$ it may be helpful to visualize them in the Hodge diamond for the mixed Hodge structure $(W,F)$ on $H$; see  \S\ref{S:hd} for two interesting examples.

%----------------------------------------------------------
\subsection{The fiber $A$} \label{S:A}
%----------------------------------------------------------

Without loss of generality, we may assume that there exist subsets $B_\ell \subset \{e_0 , e_1 , \ldots , e_\sfd\}$ so that $W_\ell(H_\bC)$ is framed by $\{ e_j \ |  \ j \in B_\ell \}$.  Let $\ell(j) = \min\{ \ell \ | \ e_j \in W_\ell(H_\bC)\}$.  Then the fibre $A \subset X$ is cut out by the equations
\begin{subequations}\label{SE:A}
\begin{equation}
  A \ = \ \{ \eta_j \ \equiv \ e_j
  \quad\hbox{mod}\quad W_{\ell(j)}(H_\bC) \ | \ 
  0 \le j \le \sfd\}\,.
\end{equation}
Equivalently, 
\begin{equation}
  A \ = \ \{ z_j  = 0 \ | \ \ell(j) \ge \sfm \}
  \,\cup\, \{ z_{a,j} = 0 \ | \ \ell(j) \ge \ell(a) \} \,.
\end{equation}
\end{subequations}
Although the $z_j$ and $z_{a,j}$ are defined only up to the action of monodromy as functions on $B \cap Z$, the fact that the monodromy preserves the weight filtration \eqref{E:GammaX} implies that the vanishing \eqref{SE:A} \emph{is} well-defined on $B \cap X$.

%----------------------------------------------------------
\section{Extension of Hodge norms} \label{S:prfTh}
%----------------------------------------------------------

The purpose of this section is to prove Theorem \ref{T:h}.  Take the neighborhood $X$ given by Theorem \ref{T:ggr}.  The problem is to construct a function $h : X \to \bR$ satisfying the properties of Theorem \ref{T:h}.  The function $h$ is defined over $B \cap X$ in \S\ref{S:dfnh}.  It is shown that this smooth function extends to a continuous function on $X$ in \S\ref{S:extnh}.  We will see that $h$ is constant on $\hPhiS$--fibres in \S\ref{S:cnst}.  Finally we show that the restriction of $h$ to $Z_I^*\cap X$ coincides with a positive multiple of $h_I$ whenever $Z_I^* \cap A$ is nonempty in \S\ref{S:hI}.

%----------------------------------------------------------
\subsection{Construction of $h$ over $B \cap X$} \label{S:dfnh}
%----------------------------------------------------------

Let $\eta_\infty = \eta \cdot e_\infty : \cU \to F^{2\sfw-\sfm}(\tPhi)$ denote the corresponding section (\S\S\ref{S:framing}--\ref{S:gpq}).  Define $0\not=\lambda \in \bC$ by 
\begin{equation}\label{E:lambda}
  \overline{\lambda\,e_\infty} \ = \ e_\sfd \,,
\end{equation}
and set
\begin{equation}\label{E:tildeh}
  \tilde h \ = \ \half\,Q(\eta_0 \,,\,\overline{\lambda\,\eta_\infty})
  \,+\, \half\,Q(\overline{\eta_0} \,,\,\lambda\,\eta_\infty) 
  \ = \ \tRe\,Q(\eta_0 \,,\,\overline{\lambda\,\eta_\infty}) \,.
\end{equation}
(It may be helpful to visualize the placement of $e_0,e_\infty,e_\sfd$ in the Hodge diamond of $(W,F)$.  See \S\ref{S:hd} for some examples.)

\begin{theorem} \label{T:descent}
The function $\tilde h : \cU \ \to \ \bR$ descends to $h:B \cap X \to \bR$.
\end{theorem}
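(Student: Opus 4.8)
The plan is to show that $\tilde h$ is invariant under the action of the monodromy $\Gamma_X$ on the universal cover $\cU$, so that it descends to a well-defined function $h$ on $B \cap X$. Concretely, for $\gamma \in \Gamma_X$ we have $\tPhi(\gamma \cdot u) = \gamma \cdot \tPhi(u)$, and the framing vectors transform by $\eta_j(\gamma\cdot u) = \gamma \cdot \eta_j(u)$ up to the ambiguity in the choice of basis; so the task is to check that $\tRe\, Q(\gamma\eta_0, \overline{\lambda\,\gamma\eta_\infty}) = \tRe\, Q(\eta_0, \overline{\lambda\,\eta_\infty})$. Since $\gamma \in \Gamma_X \subset M_X \subset G_\bR$ and $G_\bR \subset \tAut(V_\bR,Q)$, we have $Q(\gamma x, \gamma y) = Q(x,y)$; the subtlety is that $\gamma$ is conjugate-linear-compatible only because $M_X$ is defined over $\bR$ (noted just after \eqref{E:mX}), so $\overline{\gamma z} = \gamma \overline{z}$. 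Thus $Q(\gamma\eta_0, \overline{\lambda\,\gamma\eta_\infty}) = Q(\gamma\eta_0, \lambda\,\gamma\overline{\eta_\infty}) = Q(\gamma\eta_0, \gamma(\lambda\overline{\eta_\infty})) = Q(\eta_0, \lambda\overline{\eta_\infty})$, which is exactly what is needed.

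The first step is therefore to pin down precisely how the period matrix representation behaves under deck transformations. I would recall from \S\ref{S:pmr} that the lift $\tPhi = \eta\cdot F$ takes values in the Schubert cell $\sS = \exp(\fs_F^\perp)\cdot F$, that $\Gamma_X$ preserves $\sS$ (this is part of the definition of having a period matrix representation over $X$), and that $\eta_j = \eta\cdot e_j$. The monodromy equivariance $\tPhi\circ\gamma = \gamma\cdot\tPhi$ translates into $\eta(\gamma u)\cdot F = \gamma\cdot\eta(u)\cdot F$, i.e. $\gamma^{-1}\eta(\gamma u)$ and $\eta(u)$ differ by an element of $\tStab_{G_\bC}(F) = \exp(\fs_F)$. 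One must be a little careful here: this says $\eta_j(\gamma u)$ equals $\gamma\cdot\eta_j(u)$ only modulo lower-order terms coming from $\fs_F$. So the cleaner approach may be to argue directly at the level of the flag, not the explicit framing: $F^\sfn(\tPhi)$ and $F^{2\sfn-\sfm}(\tPhi)\cap W_{2\sfn-\sfm}$ are intrinsically defined sub-line-bundles (the second by the dimension count in \S\ref{S:einfty}), hence canonically monodromy-equivariant, and $\eta_0$, $\eta_\infty$ are \emph{specific} holomorphic sections of them determined by the normalization against the fixed basis. The pairing $Q(\eta_0, \overline{\eta_\infty})$ between sections of these two line bundles is then what we must track; because the line bundles are $\Gamma_X$-equivariant and $Q$ is $\Gamma_X$-invariant and real, the pairing is $\Gamma_X$-invariant up to the scalars by which $\gamma$ acts on each line bundle. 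The point of the normalizations \eqref{E:Q}, \eqref{E:lambda} and the choice of $e_\infty$ is exactly to arrange that these two scalars are complex-conjugate to one another, so that after taking real parts the function is genuinely invariant.

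The second step is to verify this conjugate-reciprocal relation between the two monodromy scalars. Here I would use that $M_X$ is defined over $\bR$, so a deck transformation $\gamma$ acts on the line $F^\sfn(H_\bC) = \bC e_0$ by some $c(\gamma)\in\bC^\times$ and on $\overline{W_{2\sfn-\sfm}\cap F^{2\sfn-\sfm}} = \bC e_\sfd$ by $\overline{c(\gamma)}$ (complex conjugation interchanges these two lines by the third displayed symmetry in \S\ref{S:einfty} together with \eqref{E:lambda}), while it acts on $\bC e_\infty$ by $c(\gamma)$ again since $e_\infty$ and $e_\sfd$ are conjugate up to scale. Then $Q(\gamma\eta_0, \overline{\lambda\gamma\eta_\infty})$ picks up $c(\gamma)\cdot\overline{c(\gamma)} = |c(\gamma)|^2$... but that is not $1$ in general, so one must instead observe that $\eta_0, \eta_\infty$ are \emph{sections}, not fixed vectors, and the scalar by which a section transforms is the reciprocal of the scalar on the line when one uses the trivialization coming from the fixed basis — i.e. if $\eta_0(u) = e_0 + (\text{lower})$ then $\eta_0(\gamma u) = \gamma^{-1}$-type scaling cancels. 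Reconciling these two viewpoints — the ``vector'' picture where scalars multiply and the ``section'' picture where they cancel — is the one genuinely delicate bookkeeping point, and it is where I expect the main obstacle to lie: one has to keep straight whether $\gamma$ is acting on $\cU$ (changing the point) or on $D$ (changing the flag), and use the equivariance $\tPhi\circ\gamma=\gamma\cdot\tPhi$ to trade one for the other exactly once. Once that is set up correctly, the invariance of $\tilde h$, hence its descent to $h$ on $B\cap X$, follows; the real part in \eqref{E:tildeh} is what absorbs any residual ambiguity in the argument of the scalar, and reality of $M_X$ absorbs the issue with conjugation. I would close by remarking that $h$ is manifestly smooth since $\eta_0,\eta_\infty$ are holomorphic sections and $Q$ is a fixed pairing, so the descended function is smooth on $B\cap X$ as claimed.
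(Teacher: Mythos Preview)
Your overall strategy---use $\Gamma_X \subset G_\bR$, $Q$--invariance, and reality to show $\tilde h(\gamma u) = \tilde h(u)$---is the right one, and you correctly flag the real difficulty: the normalized sections satisfy only $\gamma\cdot\eta_0(u) = \chi(\gamma)\,\eta_0(\gamma u)$ and $\gamma\cdot\eta_\infty(u) = \chi_\infty(\gamma)\,\eta_\infty(\gamma u)$ for \emph{a priori} distinct characters $\chi,\chi_\infty:\Gamma_X\to\bC^\times$, not literal equality.  But you do not close the gap.  Running your computation with these characters gives
\[
  Q\big(\eta_0(\gamma u),\overline{\lambda\,\eta_\infty(\gamma u)}\big)
  \ = \ \chi(\gamma)^{-1}\,\overline{\chi_\infty(\gamma)}^{\,-1}\,
  Q\big(\eta_0(u),\overline{\lambda\,\eta_\infty(u)}\big)\,,
\]
so what you actually need is $\chi(\gamma)\,\overline{\chi_\infty(\gamma)} = 1$.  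Neither reality of $M_X$ nor $Q$--invariance gives this on its own, and taking the real part does not help either: it absorbs a phase only \emph{after} you know the scalar has modulus~$1$, which is precisely the point at issue.  Your attempt in the third paragraph to link the character on $\bC e_\sfd$ to that on $\bC e_0$ via conjugation is also off: $\overline{e_0}$ lies (modulo lower terms) in $H^{\sfm-\sfn,\sfn}_{W,F}$, not in $H^{0,2\sfn-\sfm}_{W,F}=\bC e_\sfd$, so conjugation does not directly compare those two lines.

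The paper fills this gap with a separate lemma (Lemma~\ref{L:char}): one shows first that $\chi$ and $\chi_\infty$ exist as characters (using $h^{\sfn,0}=1$ and the structure of $M_X$ acting on the one--dimensional pieces $H^{\sfn,\sfm-\sfn}_{W,F}$ and $H^{2\sfn-\sfm,0}_{W,F}$), and then---this is the substantive step you are missing---that $\chi=\chi_\infty$ and that this common character takes values in $S^1$ (in fact in roots of unity).  That equality is not a formal consequence of the Deligne splitting or of reality; it uses the specific relationship between the lines $\Lambda$ and $F^\sfn_\infty$ worked out in \cite[\S3.4]{GGRinfty}.  Once $\chi=\chi_\infty\in S^1$ is in hand, the factor above is $|\chi|^{-2}=1$ and even $Q(\eta_0,\overline{\eta_\infty})$ itself (not just its real part) descends.
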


\begin{proof}
To prove the theorem, we need to show that 
\begin{equation}\label{E:descent} 
  Q\left(\eta_0(u) , \overline{\eta_\infty(u)}\right) \ = \ Q\left(\eta_0(\gamma\cdot u) , \overline{\eta_\infty(\gamma \cdot u)}\right)
\end{equation}
for all $u \in \cU$ and $\gamma \in \pi_1(B \cap X)$, cf.~\eqref{E:tildeh}.  Note that $\pi_1(B \cap X)$ acts on both $\cU$ and $H_\bC$.  Given $u \in \cU$ and $\gamma \in \pi_1(B \cap X)$, we have
\[
  \gamma \cdot F^p(\tPhi(u)) \ = \ F^p(\tPhi(\gamma\cdot u)) \,.
\]
If $\eta_j(u) \in F^p(\tPhi(u))$, then it is likewise the case that both
\begin{equation}\label{E:gamma.eta}
  \gamma\cdot \eta_j(u)\,,\, \eta_j(\gamma \cdot u) 
  \ \in \ 
  F^p(\tPhi(\gamma\cdot u)) \,.
\end{equation}
However in general it need not be the case that $\gamma\cdot \eta_j(u) =  \eta_j(\gamma \cdot u)$, or even that they be linearly dependent.  Nonetheless, Lemma \ref{L:char} below does hold, and (keeping in mind that $\gamma \in \Gamma$ preserves the polarization $Q$) implies
\[
  Q\left( \eta_0(u) , \overline{\eta_\infty(u)} \right) 
  \ = \ 
  Q\left( \gamma \cdot \eta_0(u) , \gamma \cdot \overline{\eta_\infty(u)} \right) 
  \ = \ 
  Q\left( \eta_0(\gamma \cdot u) , \overline{\eta_\infty(\gamma \cdot u)} \right) \,,
\]
establishing the desired \eqref{E:descent}.
\end{proof}

\begin{lemma} \label{L:char}
There exists a character $\chi(\gamma) \in S^1 \subset \bC$ so that
\[
  \gamma \cdot \eta_0(u) \ = \ \chi(\gamma)\,\eta_0(\gamma \cdot u)
  \tand 
  \gamma \cdot \eta_\infty(u) \ = \ 
  \chi(\gamma)\,\eta_\infty(\gamma \cdot u) \,.
\] 
\end{lemma}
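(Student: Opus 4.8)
The plan is to exploit the fact that the line bundle $\cF^{\sfn}(H_\bC)$ (equivalently $\Lambda$) is one-dimensional, and that $\eta_0$ frames its $\tPhi$-pullback, while $\eta_\infty$ is the framing of the rank-one line $W_{2\sfn-\sfm}\cap F^{2\sfn-\sfm}$ pulled back along $\tPhi$ --- both of which are canonical up to scale. Concretely: since $F^\sfn(\tPhi(u))$ is a line, and both $\gamma\cdot\eta_0(u)$ and $\eta_0(\gamma\cdot u)$ lie in the line $F^\sfn(\tPhi(\gamma\cdot u))$ by \eqref{E:gamma.eta}, there is a nonzero scalar $c_0(\gamma,u)\in\bC^\times$ with $\gamma\cdot\eta_0(u) = c_0(\gamma,u)\,\eta_0(\gamma\cdot u)$. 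First I would check that $c_0$ is independent of $u$ and is a character in $\gamma$: independence of $u$ follows because $\eta_0$ is a holomorphic frame of a holomorphic line bundle over the connected $\cU$ and $\gamma$ acts holomorphically, so $c_0(\gamma,\cdot)$ is a nowhere-zero holomorphic function on $\cU$; but $\eta_0 = e_0 + \sum_j z_j e_j$ has constant $e_0$-component equal to $1$, and the same normalization holds for $\eta_0(\gamma\cdot u)$, forcing $c_0(\gamma,u)\equiv 1$ provided $\gamma$ preserves this normalization --- which it does precisely because the monodromy $\Gamma_X$ lies in $M_X$ (see \eqref{SE:MX}) and hence fixes the flag structure used to normalize the period matrix representation. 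So in fact I expect $\gamma\cdot\eta_0(u) = \eta_0(\gamma\cdot u)$ on the nose, i.e. $\chi(\gamma)=1$; but to be safe one argues only that $c_0(\gamma,u) = \chi(\gamma)$ is a homomorphism $\pi_1(B\cap X)\to\bC^\times$ by composing the relation for $\gamma_1\gamma_2$.

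Next I would run the identical argument for $\eta_\infty$. The vector $e_\infty$ spans $W_{2\sfn-\sfm}(H_\bC)\cap F^{2\sfn-\sfm}(H_\bC)$, which by \eqref{E:dsfilts} is a sum of Deligne pieces $H^{p,q}_{W,F}$ and is one-dimensional by the symmetry statement in \S\ref{S:einfty}. The pullback $F^{2\sfn-\sfm}(\tPhi)\cap W_{2\sfn-\sfm}(\tPhi)$ (interpreting $W$ via the limit MHS, invariant under $\Gamma_X$ by \eqref{E:GammaX}) is again a holomorphic line over $\cU$ framed by $\eta_\infty$, so the same reasoning gives a scalar $c_\infty(\gamma,u)$; by the normalization of $\eta_\infty$ inherited from $\eta\cdot e_\infty$ with $\eta\in\exp(\fs_F^\perp)$, and the fact that $\Gamma_X\subset M_X$ preserves $\fs_F^\perp$ up to the relevant normalization (Remark \ref{R:anotherchoice}), we get $c_\infty(\gamma,u) = c_\infty(\gamma)$ a character. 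The content of the lemma is then that $c_0 = c_\infty$, and that this common character takes values in $S^1$.

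For the equality $c_0=c_\infty$ I would use the pairing $Q$. Recall $e_0, e_\infty, e_\sfd$ sit in $Q$-dual positions: by \eqref{E:Q} together with \eqref{E:lambda} we have $Q(e_0,\overline{\lambda e_\infty}) = Q(e_0,e_\sfd) = \d^\sfd_{0+\sfd} = 1$. More importantly, $Q$ is $G_\bR$-invariant, hence $\pi_1(B\cap X)$-invariant, so $Q(\gamma\cdot\eta_0(u),\overline{\gamma\cdot\eta_\infty(u)}) = Q(\eta_0(u),\overline{\eta_\infty(u)})$. Substituting the two scaling relations gives $c_0(\gamma)\,\overline{c_\infty(\gamma)}\;Q(\eta_0(\gamma\cdot u),\overline{\eta_\infty(\gamma\cdot u)}) = Q(\eta_0(u),\overline{\eta_\infty(u)})$. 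Here one needs that $Q(\eta_0,\overline{\eta_\infty})$ is not identically zero --- this is exactly where the first Hodge--Riemann bilinear relation enters: $F^\sfn$ pairs nontrivially with $\overline{F^{2\sfn-\sfm}}\cap\overline{W_{2\sfn-\sfm}}$ because $\sfn + (2\sfn-\sfm) \ge \sfn$ iff $\sfm\le 2\sfn$, and the positivity from the second Hodge--Riemann relation on the relevant graded piece of the limit MHS guarantees the pairing of these two specific lines is nonzero on a dense open set, hence everywhere by holomorphicity of $z_j$. Dividing, I obtain $c_0(\gamma)\,\overline{c_\infty(\gamma)} = Q(\eta_0(u),\overline{\eta_\infty(u)})/Q(\eta_0(\gamma\cdot u),\overline{\eta_\infty(\gamma\cdot u)})$; the right side, being a ratio of values of one holomorphic-times-antiholomorphic function that also equals a constant $c_0\overline{c_\infty}$ independent of $u$, forces (taking $u$ and $\gamma\cdot u$, using connectedness) that $Q(\eta_0,\overline{\eta_\infty})$ is $\pi_1$-invariant and $c_0(\gamma)\overline{c_\infty(\gamma)} = 1$, i.e. $|c_0(\gamma)| = |c_\infty(\gamma)|^{-1}$. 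Combined with a parallel computation using the Hodge-norm positivity of $\eta_0$ against $\overline{\eta_0}$ (which shows $|c_0(\gamma)| = 1$, since $Q(\eta_0,\overline{\eta_0})$ times appropriate sign is the positive-definite Hodge length, scaling by $|c_0|^2$ under $\gamma$ while being $Q$-invariant), one concludes $|c_0| = |c_\infty| = 1$ and then $c_0 = c_\infty =: \chi(\gamma) \in S^1$, which is the assertion.

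The step I expect to be the main obstacle is pinning down that the normalizations making $\eta_0$ and $\eta_\infty$ \emph{canonical} frames (up to the \emph{same} scalar) really are preserved by $\Gamma_X$ --- that is, unwinding exactly how the period matrix representation transforms under monodromy. The subtlety flagged in the proof of Theorem \ref{T:descent} is precisely that $\gamma\cdot\eta_j(u)$ need \emph{not} equal $\eta_j(\gamma\cdot u)$ for general $j$, so one must isolate why $j=0$ and $j=\infty$ are special; the answer should be that these are the unique indices for which the corresponding $\tPhi$-pullback subspace is a monodromy-stable \emph{line}, the $j=0$ case because $\cF^\sfn$ has rank one, and the $j=\infty$ case because of the dimension-one statement in \S\ref{S:einfty} together with monodromy-invariance of $W$ and $F_\infty$ via \eqref{E:GammaX}. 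Once that is cleanly established, the rest is the bookkeeping above with $Q$ and the Hodge--Riemann relations.
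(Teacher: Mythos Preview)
Your identification of the two characters $c_0$ and $c_\infty$ is essentially what the paper does: the one-dimensionality of $F^\sfn(\tPhi)$ forces the first relation, and the fact that $e_\infty$ spans the one-dimensional $H^{2\sfn-\sfm,0}_{W,F}$, together with $\Gamma_X \subset M_X$ and the vanishing \eqref{E:h}, forces the second. (Your description of the line for $\eta_\infty$ as $F^{2\sfn-\sfm}(\tPhi)\cap W_{2\sfn-\sfm}$ is not quite right, since $\exp(\fs_F^\perp)$ need not preserve $W$; the paper's argument works directly with the Deligne splitting.)

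The genuine gap is in your final step, where you try to show $c_0=c_\infty\in S^1$ using the $Q$--pairings. The identity
\[
  c_0(\gamma)\,\overline{c_\infty(\gamma)}
  \ = \
  \frac{Q(\eta_0(u),\overline{\eta_\infty(u)})}
       {Q(\eta_0(\gamma u),\overline{\eta_\infty(\gamma u)})}
\]
is correct, and the left side is constant in $u$. But constancy of this ratio does \emph{not} force it to equal $1$: the function $g(u)=Q(\eta_0(u),\overline{\eta_\infty(u)})$ satisfies $g(u)=k\,g(\gamma u)$ for some constant $k$, and there is no a~priori boundedness or periodicity argument available to conclude $k=1$. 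Concluding that $g$ is $\pi_1$--invariant at this stage is circular, since that is precisely the content of Theorem~\ref{T:descent}, which Lemma~\ref{L:char} is meant to prove. The same circularity infects your argument for $|c_0|=1$: the Hodge length $\bi^\sfn Q(\eta_0,\overline{\eta_0})$ is the norm of the \emph{frame} $\eta_0$, not of a $\pi_1$--invariant section, so it transforms by $|c_0|^2$ rather than being invariant, and you recover only the tautology $|c_0|^2=|c_0|^2$.

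The paper does not attempt a self-contained proof of this last step. It explicitly defers the equality $\chi=\chi_\infty$ and the root-of-unity assertion to \cite[\S3.4]{GGRinfty}, where the argument uses structural properties of the monodromy group $\Gamma_X$ (in particular its relation to the unipotent local monodromies and the arithmetic structure) rather than the bilinear form $Q$ alone. So the step you flagged as ``the main obstacle'' is indeed where the obstacle lies, and it requires input beyond what your $Q$--pairing argument provides.
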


\begin{proof}
First we observe that \eqref{E:gamma.eta} and $\tdim\,F^\sfw(\tPhi(\gamma \cdot u)) = 1$ implies the first equality in the lemma: $\gamma \cdot \eta_0(u) = \chi(\gamma)\,\eta_0(\gamma \cdot u)$ for some character $\chi : \Gamma_X \to \bfG_{m,\bC}$.

Next we observe that \eqref{E:g(H)}, \eqref{E:MX} and 
\[
  e_\infty \ \in \ W_{2\sfw-\sfm}(H_\bC) \cap F^{2\sfw-\sfm}(H_\bC)
  \ = \ H^{2\sfw-\sfm,0}_{W,F}
\] 
imply that 
\begin{equation}\label{E:chiinfty}
  \gamma \cdot \eta_\infty(u) \ = \ 
  \chi_\infty(\gamma)\,\eta_\infty(\gamma \cdot u)
\end{equation}
for some character $\chi_\infty : \Gamma_X \to \bfG_{m,\bC}$.  

Let 
\[
  h^{p,q}_{W,F} \ = \ \tdim_\bC\, H^{p,q}_{W,F} \,.
\]
Then \eqref{E:H} yields
\begin{equation}\label{E:h}
\begin{array}{rcl}
  1 & = & h^{\sfw,\sfm-\sfw}_{W,F} \,,\
  h^{2\sfw-\sfm,0}_{W,F} \,,\\
  0 & = & h^{\sfw,q}_{W,F} \,,\ h^{p,0}_{W,F} \,,
  \quad \forall \ q\not=\sfm-\sfw \,,\ p \not=2\sfw-\sfm \,.
\end{array}
\end{equation}
We call the $(h^{p,q}_{W,F})$ the \emph{Hodge diamond of the mixed Hodge structure}.  They are conveniently visualized in the $(p,q)$--plane (\S\ref{S:hd}).  They satisfy the \emph{symmetries}
\begin{equation}\label{E:symmetries}
  h^{p,q}_{W,F} \ = \ h^{q,p}_{W,F}
  \tand
  h^{p,q}_{W,F} \ = \ h^{p-k,q-k}_{W,F} \,,
\end{equation}
where $k = p+q-\sfw$.  (The first equality holds for arbitrary mixed Hodge structures, the second holds for \emph{limiting/polarized} mixed Hodge structures.)  These symmetries imply
\[
  1 \ = \ h^{\sfm-\sfw,\sfw}_{W,F} \,,\ 
  h^{0,2\sfw-\sfm}_{W,F}  \,,
\]
and all other $h^{p,\sfw}_{W,F}$, $h^{0,q}_{W,F}$ are zero. 

The the desired \eqref{E:chiinfty} now follows from \eqref{E:g(H)}, \eqref{SE:MX}, \eqref{E:gamma.eta}, \eqref{E:H} and \eqref{E:h}.

It remains to show that $\chi(\gamma) = \chi_\infty(\gamma)$, and that this character has norm one.  Both are established in \cite[Lemma 5.10 and Theorem 5.21]{GGR-part1}: the $\chi(\gamma)$ and $\chi_\infty(\gamma)$ here are the $\chi(\b^{-1}) = \chi(\b)^{-1}$ and $\chi_\infty(\gamma)$ there, respectively; and the $\Gamma_X$ here is denoted $\Gamma_\infty$ there.
\end{proof}

%----------------------------------------------------------
\subsection{Continuous extension of $h$ to $X$} \label{S:extnh}
%----------------------------------------------------------

Since both $\eta_0$ and $\eta_\infty$ are holomorphic, the function $h$ is smooth on $B \cap X$.  

\begin{theorem} \label{T:extn}
The smooth function $h : B \cap X \to \bR$ extends to a continuous 
\[
  h : X \ \to \ \bR \,.
\]
The restriction to $Z_I^* \cap X$ is smooth.
\end{theorem}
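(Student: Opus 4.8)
The plan is to read off the asymptotics of $\eta_0$ and $\eta_\infty$ from the nilpotent orbit theorem along each stratum $Z_I^*$, and to check that the pairing $Q(\eta_0, \overline{\lambda\,\eta_\infty})$ extends continuously across the boundary divisors because the ``dangerous'' growth directions are killed by the first Hodge--Riemann bilinear relation $Q(F^p,F^q)=0$ for $p+q<\sfn$. Concretely, fix a polydisk $\Delta^k \times \Delta^{*\ell}$ chart on $\olB$ adapted to $Z = \olB\setminus B$, with coordinates $(s,t) = (s_1,\dots,s_k,t_1,\dots,t_\ell)$ so that $B\cap X$ is $\{t_i\neq 0\}$ and the stratum $Z_I^*$ corresponds to letting exactly the $t_i$, $i\in I$, go to zero. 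Let $N_1,\dots,N_\ell$ be the local monodromy logarithms and write the period matrix representation in the form $\tPhi = \exp\!\big(\tfrac{1}{2\pi\bi}\sum (\log t_i) N_i\big)\cdot \exp(\Gamma(s,t))\cdot F$, where $\Gamma$ is holomorphic and $\Gamma(s,0)$ makes sense (nilpotent orbit theorem, \cite{MR0382272}). Since $\eta = \exp(\tfrac{1}{2\pi\bi}\sum(\log t_i)N_i)\cdot\exp(\Gamma)$ takes values in $\exp(\fs_F^\perp)$ after a harmless adjustment, we may write $\eta_0 = \eta\cdot e_0$ and $\eta_\infty = \eta\cdot e_\infty$ with each $e_0, e_\infty$ sitting in a single Deligne bidegree, namely $e_0 \in H^{\sfn,\sfm-\sfn}_{W,F}$ and $e_\infty \in H^{2\sfn-\sfm,0}_{W,F}$ by \eqref{E:H}.

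The key computation is then this: expand
\[
  Q(\eta_0, \overline{\lambda\,\eta_\infty})
  \ = \ Q\!\left( \exp\!\Big(\tsum_i \tfrac{\log t_i}{2\pi\bi} N_i\Big)\exp(\Gamma)\,e_0 \,,\,
  \overline{\lambda\,\exp\!\Big(\tsum_i \tfrac{\log t_i}{2\pi\bi} N_i\Big)\exp(\Gamma)\,e_\infty}\right).
\]
Using $Q$-invariance of $G_\bC$ and the fact that $N_i$ are $Q$-skew, the unbounded factor $\exp(\sum \tfrac{\log t_i}{2\pi\bi}N_i)$ on the left can be moved across $Q$ to partially cancel against its conjugate; what survives are finitely many terms, each a polynomial in $\log|t_i|$ (no $\arg t_i$ survives, by the usual $\tSL_2$ splitting argument), paired against brackets of $N_i$'s applied to $e_0$ and $e_\infty$. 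Because $e_0$ lies in $F^\sfn$ and each $N_i$ lowers the Hodge filtration level by one while preserving $F_\infty$, and because $e_\infty \in F^{2\sfn-\sfm}$, the first Hodge--Riemann relation forces every term carrying a nonzero power of $\log|t_i|$ to vanish — precisely the same mechanism that made $\tilde h$ well-defined in Lemma \ref{L:char}. Hence $Q(\eta_0,\overline{\eta_\infty})$ is bounded near $Z$, is holomorphic $\times$ antiholomorphic in the directions transverse to the $\log$'s, and extends continuously, with the boundary value on $Z_I^*$ given by the same formula evaluated at $t_i = 0$ for $i\in I$ (this is exactly the Hodge pairing of the induced limiting VHS on $Z_I^*$, to be matched with $h_I$ in \S\ref{S:hI}). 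Taking real parts gives the continuous extension of $h = \tRe\, Q(\eta_0,\overline{\lambda\eta_\infty})$, and the restriction to $Z_I^*$ is $\tRe$ of a product of a holomorphic and an antiholomorphic section of bundles that extend to $\olB$ (Theorem \ref{T:ggr}\emph{\ref{i:triv}} guarantees the relevant determinant line bundle extends over $X$), hence smooth on $Z_I^* \cap X$.

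The main obstacle I anticipate is the bookkeeping of the several-variable degeneration: one must run the $\tSL_2$-orbit / several-variable nilpotent orbit machinery (Cattani--Kaplan--Schmid) carefully enough to know that \emph{all} the surviving terms in the above expansion are genuinely $\log$-free and that the limit is uniform as $(s,t)\to(s_0,0)$ with $s_0$ in the stratum, not merely along rays. A secondary subtlety is that $\eta$ as defined lands in $\exp(\fs_F^\perp)$ only after the coordinate-dependent adjustment hidden in $\exp(\Gamma)$; one should check that this adjustment is bounded (indeed extends holomorphically) so it does not reintroduce blow-up. Once the vanishing-of-log-terms step is in hand — and it is really just the quantitative form of the identity already exploited in Lemma \ref{L:char} — continuity and stratum-wise smoothness follow formally.
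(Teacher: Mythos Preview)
Your overall architecture is sound --- work locally with the nilpotent orbit expansion, isolate the terms polynomial in $\log|t_j|$, and show they vanish in the limit --- but the mechanism you invoke to kill those terms is wrong, and this is a genuine gap. You appeal to the first Hodge--Riemann relation $Q(F^p,F^q)=0$ for $p+q>\sfn$, but HR1 controls the pairing of two \emph{holomorphic} pieces of the Hodge filtration. Here you are pairing $\eta_0$ against $\overline{\eta_\infty}$, i.e.\ a holomorphic section against an antiholomorphic one, and HR1 says nothing about $Q(F^p,\overline{F^q})$; that pairing is governed by HR2 and is typically nonzero. After you move $\theta=\exp(\sum\ell(t_j)N_j)$ across $Q$ (a legitimate step, and you are right that only $\log|t_j|$ survives), you are left with terms $Q(\zeta e_0,\,N^a\,\overline{\zeta e_\infty})$, and there is no Hodge-filtration reason for these to vanish. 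Your reference to Lemma~\ref{L:char} as ``the same mechanism'' is also off: that lemma uses the Deligne bigrading and the structure of $M_X$ relative to $(W,F_\infty)$, not HR1.

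What actually kills the log terms in the paper's proof is the $Q$-isotropy of the \emph{weight} filtrations $W^j=W(N_j)[-\sfn]$: one has $Q(W^j_\ell,W^j_m)=0$ for $\ell+m<2\sfn$, and Lemma~\ref{L:m} (proved via the several-variable $\mathrm{SL}_2$-orbit theorem) places $e_0\in W^j_{\sfm_j}$ and $e_\infty\in W^j_{2\sfn-\sfm_j}$. Since $W^j$ is real, $\overline{e_\infty}$ sits in the same level, and any $N^a$ with $a_j>0$ drops $e_0$ into $W^j_{\sfm_j-2a_j}$, forcing the pairing to vanish --- \emph{provided} the intervening operator preserves $W^j$. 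It does so only asymptotically (horizontality forces $\zeta|_{\{t_j=0\}}$ to centralize $N_j$), and the paper extracts from this that each offending term carries an explicit factor of $t_j$ or $\bar t_j$, so that $t_j(\log|t_j|)^{a_j}\to0$ delivers the limit. Your sketch has none of this weight-filtration structure, and without it the argument does not close.
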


%----------------------------------------------------------
\subsubsection{Outline of the proof of Theorem  \ref{T:extn}}
%----------------------------------------------------------

It it suffices to work locally: given a local coordinate chart $U \subset X$, we will show that the restriction $h\big|_{B \cap U}$ extends to a continuous function on $U$, that the extension is constant on $\hPhiS$--fibres, and restricts to a smooth function on the strata $Z_I^* \cap U$.  Sections \ref{S:loc1}--\ref{S:eta0} are occupied with studying the local coordinate expression of $Q(\eta_0,\overline{\eta_\infty})$ that is given by the nilpotent orbit theorem.  Following a comment on the limits to be analyzed in \S\ref{S:limcmt}, the meat of the argument is \S\ref{S:lim0} where properties of weight filtrations are used to establish the desired results.

%----------------------------------------------------------
\subsubsection{Local structure at infinity} \label{S:loc1}
%----------------------------------------------------------

Let $\Delta = \{ \tau \in \bC \ : \ |\tau| < 1 \}$ be the unit disc, and let $\Delta^* = \{ \tau \in \Delta \ : \ \tau\not=0\}$ be the punctured unit disc.  Set
\[
  \ell(\tau) \ = \ \frac{\log \tau}{2\pi\bi} \,.
\]

Fix a point $b\in X$.  There exists a local coordinate chart $t = (t_1,\ldots,t_r) : U \stackrel{\simeq}{\longrightarrow} \Delta^r$, centered at $b$, so that $B \cap U \simeq (\Delta^*)^k \times \Delta^{r-k}$.  Without loss of generality $U \subset X$.  By the nilpotent orbit theorem \cite{MR0382272} there exists a holomorphic function $\sF : U \to \check D$, and nilpotent operators $N_1 , \ldots , N_k \in \fg_\bQ$ so that the local coordinate representation of $\Phi$ is
\[
  \Phi(t) \ = \ 
  \exp \Big[ \sum_{j=1}^k \ell(t_j) N_j \Big] \cdot \sF(t) \,,
\]
modulo the local monodromy group $\Gamma_\mathrm{loc} \subset \Gamma_X$ generated by the $\exp(N_1),\ldots,\exp(N_k)$.  

Suppose that $b \in Z_J^* \cap X$.  Then $|J|=k$, and without loss of generality we may suppose that $J = \{1,\ldots,k\}$.  Given $I \subset  J$, we have
\[
  Z^*_I \cap U \ = \ 
  \{ t_i=0 \,, \ \forall \ i \in I \,;\ 
  	t_j \not=0 \,,\ \forall \ j \in J \bs I \} \,.
\]

Shrinking $X$ if necessary, there exists $K \supset  J$ so that $Z_K^* \cap A$ is nonempty.  This implies that the $N_1,\ldots,N_k$ generate a face of a nilpotent cone $\s'$ arising along $A$.  In particular, 
\begin{equation}\label{E:N}
  N_1,\ldots,N_k \ \in \ \bigoplus_{p,q\le-1}\fg^{p,q}_{W,F} 
  \ \subset \ \fs_F^\perp\,\cap\,W_{-2}(\fg_\bC) \,,
\end{equation}
by Remark \ref{R:anotherchoice}.  Since $\s' \subset \fs_F^\perp$, and $\Phi(t)$ takes value in $\Gamma_\mathrm{loc}\bs(\sS \cap D)$,  it follows that $\sF$ takes value in $\sS$.  So $\sF(t) = \zeta(t) \cdot F$ for  some holomorphic map $\zeta : U \to \exp(\fs_F^\perp)$.  In particular, the local coordinate representation of the function $\eta$ in \eqref{E:eta} is 
\begin{equation}\label{E:eta-loc0}
  \eta(t) \ = \ 
  \exp \Big[ \sum_{j=1}^k \ell(t_j) N_j \Big] \cdot \zeta(t) \,.
\end{equation}

%----------------------------------------------------------
\subsubsection{Local characterization of the fibre}\label{S:Aloc}
%----------------------------------------------------------

Suppose that $Z_J^* \cap A$ is nonempty.  Horizontality implies that the restriction of $\z$ to $Z_J^* \cap U = \{ t_1,\ldots,t_k=0 \}$ centralizes the $N_1,\ldots,N_k$, and therefore stabilizes the weight filtration $W$.  Equivalently, 
\begin{equation}\label{E:locW}
  \log \z \big|_{Z_J^* \cap U} \ \equiv \ 0
  \quad\hbox{modulo}\quad W_0(\fg_\bC) \,.
\end{equation}
It follows from the global characterization \eqref{SE:A} of the  fibre, that it is locally characterized by
\begin{equation}\label{E:locA}
  A \,\cap\, Z_J^* \,\cap\, U \ = \ 
  \left\{
  \log \z \big|_{Z_J^* \cap U} \ \equiv \ 0
  \quad\hbox{modulo}\quad W_{-1}(\fg_\bC) \right\} \,.
\end{equation}
Horizontality then implies that 
\begin{equation} \nonumber
  \log \z \big|_A \ \equiv \ 0
  \quad\hbox{modulo}\quad 
  \bigoplus_{p<-1,q \le 0}\fg^{p,q}_{W,F}
  \ \subset \ W_{-1}(\fg_\bC) \,.
\end{equation}

%----------------------------------------------------------
\subsubsection{A second local coordinate representation}\label{S:loc2}
%----------------------------------------------------------

It will be helpful to re-write \eqref{E:eta-loc0} as \eqref{E:eta-loc}.  Shrinking the coordinate neighborhood $U$ if necessary, we may assume that $Z_I^* \cap U$ is nonempty if and only if $I \subset J$.  Set 
\[
  \hat t_I \ = \ \prod_{j \not\in I} t_j \,.
\]
Then horizontality of the period map implies that $\zeta\big|_{Z_I^*\cap U}$ takes value in the centralizer $\fz_I$ of $\{ N_j \ | \ j \in I \}$.  So the holomorphic map $\log\z : U \to \fs_F^\perp$ may be expressed as 
\[
  \log \z \ = \ \sum_{I \subset J} \hat t_I \,f_I 
\]
with $f_I  : U  \to \fz_I \cap \fs_F^\perp$ holomorphic.  Set $f = f_\emptyset$.

Define 
\[
  \hat\theta_I(t) 
  \ = \ \exp \Big[ \sum_{j\not\in I} \ell(t_j) N_j  \Big] \,.
\]
Set
\begin{equation}\label{E:a}
   \hat\zeta(t) \ = \ 
   \exp \Big[ \sum_{I \subset J}
  	\hat t_I\,\tAd_{\hat\theta_I(t)}(f_I)
  \Big]
\end{equation}
and
\[
  \theta(t) \ = \ \hat\theta_\emptyset(t)
  \ = \ \exp \Big[ \sum_{j=1}^k \ell(t_j) N_j  \Big] \,.
\]
Then the local coordinate expression for $\eta$ in \eqref{E:eta-loc0} may be re-expressed as 
\begin{equation}\label{E:eta-loc}
  \eta(t) 
  \ = \ \theta(t) \cdot \zeta(t)
  \ = \ 
  \hat\z(t) \cdot \theta(t) \,.
\end{equation}

\begin{example} \label{eg:1}
If $J =  \{1,2\}$, then 
\[
  \log \z \ = \ f_{12} \,+\,
  t_1\,f_2 \,+\, t_2\,f_1 \,+\, t_1 t_2  \, f \,,
\]
with $f_{12}$ centralizing $N_1$ and $N_2$, and $f_j$ centralizing $N_j$.  We have 
\begin{eqnarray*}
  \sum_{I \subset J} \hat t_I\,\tAd_{\hat\theta_I(t)}(f_I)
  & = & f_{12} \,+\, t_1 \tAd_{\exp(\ell(t_1)N_1)}(f_2)
  \,+\, t_2 \tAd_{\exp(\ell(t_2)N_2)}(f_1) 
  \\ & & + \,
  t_1 t_2 \, \tAd_{\exp(\ell(t_1)N_1+\ell(t_2)N_2)}(f) \,.
\end{eqnarray*}
\end{example}

%----------------------------------------------------------
\subsubsection{Properties of $\hat\z(t)$}\label{S:a}
%----------------------------------------------------------

Since the $N_j$ are nilpotent elements of $\fg_\bC$, the $\tAd_{\hat\theta_I(t)}(f_I)$ are polynomial in the $\ell(t_j)$, $j \not\in I$.  This implies that the $\hat t_I \tAd_{\hat\theta_I(t)}(f_I)$ are continuous, albeit multivalued, functions on all of $U$ that are holomorphic on the strata $Z_I^* \cap U$.  By extension, $\hat\z(t)$ is continuous on all of $U$ and holomorphic on the strata $Z_I^* \cap U$.  We have limits
\begin{eqnarray*}
  \lim_{t_1,\ldots,t_k\to0} \
  \sum_{I \subset J} \hat t_I\,\tAd_{\hat\theta_I(t)}(f_I)
  & = & f_J \,,\\
  \lim_{\substack{t_i\to0 \\ i\in I' }} \
  \sum_{I \subset J} \hat t_I\,\tAd_{\hat\theta_I(t)}(f_I)
  & = & 
  \sum_{I' \subset I \subset J} \hat t_I\,\tAd_{I(t)}(f_I)
  \,.
\end{eqnarray*}
The first limit implies
\begin{equation}\label{E:lima}
  \lim_{t_1,\ldots,t_k\to0} 
  \hat\z(t) \ = \ \exp(f_J)
\end{equation}

%----------------------------------------------------------
\subsubsection{Continuity of $\eta_\infty$} \label{S:eta-infty}
%----------------------------------------------------------

It follows from Remark \ref{R:anotherchoice}, \eqref{E:g(H)} and \eqref{E:H} that $N_j(e_\infty) = 0$.  So
\[
  \theta(t) \cdot e_\infty \ = \ e_\infty \,,
\]
and
\begin{equation}\label{E:eta-infty}
  \eta_\infty(t) \ = \ \eta(t) \cdot e_\infty
  \ = \ \hat\z(t) \cdot e_\infty 
\end{equation}
is continuous (albeit defined only up to $\Gamma_\mathrm{loc}$).  In particular, the limit 
\begin{equation}\label{E:liminfty}
  \lim_{t_1,\ldots,t_k\to0} \eta_\infty
  \ = \ \exp(f_J) \cdot e_\infty
\end{equation}
exists, and is well-defined (independent of $\Gamma_\mathrm{loc}$).

\begin{remark} \label{R:whyinfty}
In the case that $b \in A$, the limit \eqref{E:liminfty} is a nonzero element of  
\[
  L_\infty \ = \ 
  \left\{ \begin{array}{ll}
  \tdet(F_\infty^\sfn V_\bC) \,=\, \tdet(F_\infty^1 V_\bC) 
  \,,\quad & \sfn = 1\,,\\
  \tdet(F_\infty^\sfn V_\bC) \,\ot\, 
  \tdet(F_\infty^{\sfn-1} V_\bC) \,\ot\cdots \ot\, 
  \tdet(F_\infty^\sfk V_\bC) \,,\quad & \sfn \hbox{ even,} \\
  \left[\tdet(F_\infty^\sfn V_\bC) \,\ot\cdots \ot\, 
  \tdet(F_\infty^{\sfk+1} V_\bC)\right]^{\ot 2}
  \ot \tdet(F_\infty^\sfk V_\bC) \,,\quad &
  \sfn \ge 3 \hbox{ odd.}
  \end{array}\right.
\]
\end{remark}

%----------------------------------------------------------
\subsubsection{Local coordinate representations of $\eta_0$ and  $Q(\eta_0,\overline{\eta_\infty})$} \label{S:eta0}
%----------------------------------------------------------

The nilpotency of the $N_j \in W_{-2}(\fg_\bC)$ implies that $\theta(t) \cdot e_0$ is polynomial in the $\ell(t_j)$.  From \eqref{E:g(H)} and \eqref{E:N} we see that this polynomial has degree at most $\sfm-\sfw$.  Write
\[
  \theta(t) \cdot e_0 \ = \ 
  \sum_{|a| \le\sfm-\sfw} 
  c_a\,\ell(t_1)^{a_1}\cdots\ell(t_k)^{a_k}\,
  N_1^{a_1}\cdots N_k^{a_k}(e_0) \,,
\]
with $a = (a_1,\ldots,a_k)$ is a $k$-tuple of non-negative integers, and $|a| = a_1+\cdots+a_k$.   By \eqref{E:eta-loc} and \eqref{E:eta-infty}, we have
\[
  Q \left( \eta_0 , \overline{\eta_\infty} \right)
  \ = \ \sum_{|a| \le\sfm-\sfw} 
  c_a\,\ell(t_1)^{a_1}\cdots\ell(t_k)^{a_k}\,
  Q \left( \hat\z(t) \cdot
  N_1^{a_1}\cdots N_k^{a_k}(e_0) \,,\,
  \overline{\hat\z(t) \cdot e_\infty}
  \right) \,.
\]

%----------------------------------------------------------
\subsubsection{Comment on computation of limits} \label{S:limcmt}
%----------------------------------------------------------

In order to prove Theorem \ref{T:extn} we need to show that 
\begin{equation}\label{E:limhI}
  \lim_{\substack{t_i\to0 \\ i\in I}} h \quad
  \hbox{\emph{exists and defines a smooth function on $Z^*_I \cap U$;}}
\end{equation}
and that the resulting function $h : U \to \bR$ is continuous.  We will prove \eqref{E:limhI} in the case that $I = J = \{1,\ldots,k\}$.  The general case $I \subset J$ is a straightforward generalization of the argument here, as is the exercise to verify that the resulting function is continuous on all of $U$; details are left to the reader.

Returning to \S\ref{S:eta0}, we are going to show that 
\begin{equation}\label{E:lim0}
  \lim_{t_1,\ldots,t_k\to 0} \ell(t_1)^{a_1}\cdots\ell(t_k)^{a_k}\,
  Q \left( \hat\z(t) \cdot
  N_1^{a_1}\cdots N_k^{a_k}(e_0) \,,\,
  \overline{\hat\z(t) \cdot e_\infty}
  \right) \ = \ 0 \,,
\end{equation}
whenever $|a|>0$.  Assume for the moment that \eqref{E:lim0} holds.  Then
\begin{equation}\label{E:limQ}
  \lim_{t_1,\ldots,t_k\to 0} 
  Q \left( \eta_0 , \overline{\eta_\infty} \right)
  \ = \ Q \left( \hat\z(t) \cdot e_0 \,,\,
  \overline{\hat\z(t) \cdot e_\infty}
  \right) \,.
\end{equation}
Since $\hat\z(t)$ is continuous on $U$, and holomorphic on strata $Z_I^* \cap U$ (\S\ref{S:a}), the desired \eqref{E:limhI} now follows (in the case $I =J$) by the definition \eqref{E:tildeh} of $\tilde h$: we have 
\begin{equation}\label{E:hZJ}
  h|_{Z_J^* \cap U} \ = \ \tRe\, Q \left( \hat\z(t) \cdot e_0 \,,\,
  \overline{\lambda \hat\z(t) \cdot e_\infty}
  \right)
\end{equation}  
This completes the proof of Theorem \ref{T:extn} (modulo the reader's exercise).

%----------------------------------------------------------
\subsubsection{Proof of \eqref{E:lim0}} \label{S:lim0}
%----------------------------------------------------------

The key lemma is the following.  Let $W^j = W(N_j)$ be the weight filtration along $Z_j^* \cap U$.

\begin{lemma} \label{L:Q0}
If $\hat\z(t)$ stabilizes \emph{any} one of the $W^j$ with $a_j > 0$, then 
\begin{equation}\label{E:Q0}
  Q \left( \hat\z(t) \cdot
  N_1^{a_1}\cdots N_k^{a_k}(e_0) \,,\,
  \overline{\hat\z(t) \cdot e_\infty}
  \right) \ = \ 0
\end{equation}
\end{lemma}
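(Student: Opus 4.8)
The plan is to prove \eqref{E:Q0} by a weight estimate with respect to the monodromy weight filtration $W^j=W(N_j)$ (centered at $\sfn$), using the polarization form $Q$. Fix $j$ with $a_j>0$ for which $g:=\hat\z(t)$ stabilizes $W^j$. I will use four facts. First, since $N_j\in\fg_\bQ$, the filtration $W^j$ is defined over $\bQ$, so $\overline{W^j_\ell}=W^j_\ell$; hence $\overline g$ stabilizes $W^j$ as well, and both $g$ and $\overline g$ carry each $W^j_\ell(H_\bC)$ into itself. Second, the nilpotent orbit theorem gives a $Q$--polarized limiting mixed Hodge structure along $Z_j^*\cap U$ with weight filtration $W^j$, whence $Q\bigl(W^j_a(H_\bC),W^j_b(H_\bC)\bigr)=0$ for $a+b<2\sfn$, and the hard Lefschetz ($\fsl_2$) structure of $W^j$ gives $\ker\bigl(N_j|_{H_\bC}\bigr)\subseteq W^j_\sfn(H_\bC)$. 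Third, for $i\ne j$ the operator $N_i$ commutes with $N_j$, so $\exp(sN_i)$ preserves the two properties that characterize $W^j$; by uniqueness of the monodromy weight filtration $\exp(sN_i)$ stabilizes $W^j$, and differentiating, $N_i(W^j_\ell)\subseteq W^j_\ell$ for all $\ell$. Fourth, $N_j(e_\infty)=0$ (\S\ref{S:eta-infty}), so by reality of $N_j$ also $N_j(\overline{e_\infty})=\overline{N_j(e_\infty)}=0$; by \eqref{E:lambda} $\overline{e_\infty}$ is a nonzero multiple of $e_\sfd$, so $e_\infty,e_\sfd\in\ker\bigl(N_j|_{H_\bC}\bigr)\subseteq W^j_\sfn(H_\bC)$.

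With these in hand I estimate $W^j$--weights. Because the $N_i$ commute and $a_j>0$,
\[
  N_1^{a_1}\cdots N_k^{a_k}(e_0)\ =\ N_j^{a_j}\Bigl(\textstyle\prod_{i\ne j}N_i^{a_i}(e_0)\Bigr);
\]
the $N_i$ with $i\ne j$ do not raise $W^j$--weight and $N_j$ lowers it by at least $2$, so this vector lies in $W^j_{c_0-2a_j}(H_\bC)$, where $c_0$ is the $W^j$--weight of $e_0$. Since $e_0\in H^{\sfn,\sfm-\sfn}_{W,F}$ (\eqref{E:H}) and $N_j\in\bigoplus_{p,q\le-1}\fg^{p,q}_{W,F}$ (\eqref{E:N}) strictly lowers both Deligne indices, $N_j^{\sfm-\sfn+1}(e_0)=0$, which by hard Lefschetz for $W^j$ forces $c_0\le\sfm$. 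Applying $g$ (which does not raise $W^j$--weight), $g\cdot N_1^{a_1}\cdots N_k^{a_k}(e_0)\in W^j_{\sfm-2a_j}(H_\bC)$. On the other side $\overline{g\cdot e_\infty}=\overline g\cdot\overline{e_\infty}$, and $\overline{e_\infty}\in W^j_{2\sfn-\sfm}(H_\bC)$ (see the next paragraph), so $\overline{g\cdot e_\infty}\in W^j_{2\sfn-\sfm}(H_\bC)$. By the polarization estimate, \eqref{E:Q0} then vanishes because
\[
  (\sfm-2a_j)+(2\sfn-\sfm)\ =\ 2\sfn-2a_j\ <\ 2\sfn.
\]

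The step that requires care is the claim $\overline{e_\infty}\in W^j_{2\sfn-\sfm}(H_\bC)$; equivalently, since $e_\sfd\in H^{0,2\sfn-\sfm}_{W,F}$ has $W$--weight exactly $2\sfn-\sfm$, one must compare the two monodromy weight filtrations $W$ (of the limiting mixed Hodge structure along $A$) and $W^j$ (along $Z_j^*$). When $\sfm=\sfn$ this is immediate from $e_\sfd\in\ker\bigl(N_j|_{H_\bC}\bigr)\subseteq W^j_\sfn(H_\bC)$; when $\sfm>\sfn$ it is the nestedness $W_\ell(H_\bC)\subseteq W^j_\ell(H_\bC)$ for $\ell<\sfn$ of the weight filtrations attached to the nested pair of strata $A\subseteq Z_j^*$, reflecting that $N_j$ lies deep in $W$ (\eqref{E:N}, Remark \ref{R:anotherchoice}). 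I expect this comparison of weight filtrations to be where the real work lies; everything else is bookkeeping with $Q$ and the weight filtration. (The argument can be repackaged via $Q$--invariance of $g$ and $N_j\in\fg$: writing $N_1^{a_1}\cdots N_k^{a_k}(e_0)=N_j(v)$ and pulling $g$ and then $N_j$ across $Q$, \eqref{E:Q0} reduces to $\hat\z(t)^{-1}\,\overline{\hat\z(t)}\cdot\overline{e_\infty}\in\ker\bigl(N_j|_{H_\bC}\bigr)$ --- that the ``unitary'' discrepancy $\hat\z(t)^{-1}\overline{\hat\z(t)}$, which like $\hat\z(t)$ stabilizes $W^j$, returns $\overline{e_\infty}$ to $\ker N_j$; this is the same obstruction in a different guise.)
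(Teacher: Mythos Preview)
Your overall strategy is exactly the one the paper uses: locate both $\hat\z(t)\cdot N_1^{a_1}\cdots N_k^{a_k}(e_0)$ and $\overline{\hat\z(t)\cdot e_\infty}$ in the $Q$--isotropic filtration $W^j$ and add weights. The facts you assemble (reality of $W^j$, $Q$--isotropy, $N_i$ with $i\ne j$ preserves $W^j$, $e_\infty\in\ker N_j$) are all correct and all used.

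The gap is precisely where you suspect it is. Your bound $c_0\le\sfm$ for the $W^j$--weight of $e_0$ is correct, but you then overcompensate by claiming $\overline{e_\infty}\in W^j_{2\sfn-\sfm}$. That claim is \emph{false} in general, and the ``nestedness'' $W_\ell\subseteq W^j_\ell$ for $\ell<\sfn$ that you invoke does not hold. Already for two commuting Jordan blocks $N_1=J\otimes I$, $N_2=I\otimes J$ on $\bC^2\otimes\bC^2$ one has (centered at $0$) $W_{-2}(N_1+N_2)=\langle e_1\otimes e_1\rangle$ while $W^1_{-2}=0$. In the Hodge--theoretic situation relevant here (e.g.\ \S\ref{S:hd2}), if the full cone along $A$ gives $\sfm=7$ while the face $N_j$ gives a shallower degeneration with $\sfm_j=5$, then by Lemma~\ref{L:m} the vector $e_\infty$ has $W^j$--weight exactly $2\sfn-\sfm_j=3$, strictly larger than the $2\sfn-\sfm=1$ you need. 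Your repackaged version at the end runs into the same wall: stabilizing $W^j$ does not imply preserving $\ker N_j$.

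What is actually needed is that the \emph{same} integer controls both vectors: if $\sfm_j$ denotes the exact $W^j$--weight of $e_0$ (your $c_0$), then $e_\infty\in W^j_{2\sfn-\sfm_j}$. This is Lemma~\ref{L:m}, and its proof is not elementary: it invokes the several--variable $\tSL_2$--orbit theorem of \cite{MR840721} to replace $(F,\s)$ by data for which $e_\infty$ is a multiple of $\hat N{}^{\sfm-\sfn}e_0$ with $\hat N=N_1+\hat N_2$ coming from commuting $\fsl_2$--triples; then standard $\fsl_2$--representation theory (applied to the highest weight vector $e_0$) produces the matching bound. Once Lemma~\ref{L:m} is in hand, your weight count goes through verbatim with $\sfm$ replaced by $\sfm_j$: $(\sfm_j-2a_j)+(2\sfn-\sfm_j)=2\sfn-2a_j<2\sfn$.
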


\begin{proof}
The lemma is a consequence of properties of weight filtrations.  By Lemma \ref{L:m}, there exist $\sfw \le \sfm_j \le \sfm$ so that
\begin{equation}\label{E:e0W}
  e_0 \ \in \ W^j_{\sfm_j}(H_\bC) \,, \quad
  e_0 \ \not\in \ W^j_{\sfm_j-1}(H_\bC) \,,
\end{equation}
and
\begin{equation}\label{E:einfty}
  e_\infty \ \in \ W^j_{2\sfw-\sfm_j}(H_\bC) \,,\quad
  e_\infty \ \not\in \ W^j_{2\sfw-\sfm_j-1}(H_\bC) \,.
\end{equation}
We have
\begin{equation}\label{E:Ne0}
  N_1^{a_1}\cdots N_k^{a_k}(e_0) \ \in \ 
  W^1_{\sfm_1-2a_1}(H_\bC) \,\cap\cdots\cap\, 
  W^k_{\sfm_k-2a_k}(H_\bC)\,.
\end{equation}
The essential property of the weight filtrations that we will utilize is that they are $Q$-isotropic
\begin{equation}\label{E:Wj}
  Q(W^j_\ell \,,\, W^j_m) \ = \ 0
  \qquad \forall \quad \ell+m< 2\,\sfw \,.
\end{equation}
The lemma now follows from \eqref{E:einfty}, \eqref{E:Ne0} and \eqref{E:Wj}.
\end{proof}

\begin{corollary} \label{C:Q0}
If $\hat\z(t)$ centralizes \emph{any} one of the $N_j$ with $a_j > 0$, then \eqref{E:Q0} holds.
\end{corollary}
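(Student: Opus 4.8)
The plan is to deduce this at once from Lemma \ref{L:Q0}, the point being that the hypothesis here --- that $\hat\z(t)$ centralizes $N_j$ --- is strictly stronger than the hypothesis of that lemma, namely that $\hat\z(t)$ stabilizes $W^j$. So it suffices to check that an element of $G_\bC$ centralizing $N_j$ automatically stabilizes the weight filtration $W^j = W(N_j)$.

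To carry this out I would first recall that, $N_j$ being a nilpotent endomorphism of $H_\bC$, its weight filtration $W^j = W(N_j)$ is \emph{uniquely} characterized among increasing filtrations of $H_\bC$ by the two conditions $N_j(W^j_\ell) \subseteq W^j_{\ell-2}$ for all $\ell$, and $N_j^\ell : \tgr^{W^j}_{\sfn+\ell} \to \tgr^{W^j}_{\sfn-\ell}$ an isomorphism for all $\ell \ge 0$ (see \cite{MR3701983}). Now suppose $g = \hat\z(t) \in \exp(\fs_F^\perp)$ centralizes $N_j$, so that $g N_j g^{-1} = N_j$. Then the translated filtration $g \cdot W^j$ satisfies the same two conditions: $N_j(g W^j_\ell) = g\,N_j(W^j_\ell) \subseteq g W^j_{\ell-2}$ precisely because $g N_j g^{-1} = N_j$, and $g$ carries the isomorphisms $N_j^\ell$ on the graded quotients of $W^j$ to the corresponding maps on the graded quotients of $g \cdot W^j$. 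By the uniqueness just recalled, $g \cdot W^j = W^j$; that is, $\hat\z(t)$ stabilizes $W^j$. Since $a_j > 0$ by hypothesis, Lemma \ref{L:Q0} then delivers \eqref{E:Q0}.

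I do not anticipate any real obstacle: the entire content is the classical uniqueness of the monodromy weight filtration of a nilpotent operator, everything else being Lemma \ref{L:Q0}. The one point that deserves a line of care is the $\Gamma_\mathrm{loc}$--ambiguity in $\hat\z(t)$, but it is harmless: $\Gamma_\mathrm{loc}$ is generated by the pairwise commuting $\exp(N_i)$, each of which already commutes with $N_j$, so whether or not a given representative of $\hat\z(t)$ centralizes $N_j$ does not depend on the choice of representative.
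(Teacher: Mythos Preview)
Your proposal is correct and follows exactly the paper's approach: the paper's proof is the single sentence ``The centralizer of $N_j$ preserves the weight filtration $W^j$,'' and you have simply supplied the standard justification for that sentence via uniqueness of the monodromy weight filtration before invoking Lemma~\ref{L:Q0}. Your remark on the $\Gamma_{\mathrm{loc}}$--ambiguity is harmless but unnecessary here, since the corollary is a pointwise implication for whatever representative of $\hat\zeta(t)$ is in hand.
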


\begin{proof}
The centralizer of $N_j$ preserves the weight filtration $W^j$.
\end{proof}

%----------------------------------------------------------
\subsubsection{Completing the proof of Theorem \ref{T:extn}}
%----------------------------------------------------------

By definition $f_I$ centralizes every $N_j$ with $j \in I$.  And since the $N_j$ all commute, the $\hat\theta_I$ centralizes every $N_j$.  So $\tAd_{\hat\theta_I(t)}(f_I)$ also centralizes every $N_j$ with $j \in I$.  Now, $\hat\z(t)$ will centralize $N_j$ if $f_I(t)=0$ for every $I \not\ni j$.  So \eqref{E:Q0} will hold unless, for every $j$ such that $a_j > 0$, there exists $I \not\ni j$ with $f_I(t)\not=0$.  Regard 
\[
  \hat\z(t) \ = \ \sum A_b\,t_1^{b_1} \cdots t_k^{b_k}
\]
as polynomial in the $t_j$ with coefficients taking value in $\tEnd(H)$.  The coefficient $A_b$ will centralize $N_j$ if $b_j=0$.  Rewrite the left-hand side of \eqref{E:Q0} as 
\[
  \sum_{b,c}
  t_1^{b_1} \overline{t}{}^{c_1}_1
  \cdots t_k^{b_k} \overline{t}{}^{c_k}_k \,
  Q \left( A_b \cdot
  N_1^{a_1}\cdots N_k^{a_k}(e_0) \,,\,
  \overline{A_c \cdot e_\infty}
  \right) \,,
\]
and note that 
$Q \left( A_b \cdot N_1^{a_1}\cdots N_k^{a_k}(e_0) \,,\,
  \overline{A_c \cdot e_\infty} \right) = 0$
if we have $a_j>0$ and $b_j+c_j=0$ for some $j$.  Returning to \eqref{E:lim0} we see that if $a_j>0$, then 
\[
  Q \left( \hat\z(t) \cdot N_1^{a_1}\cdots N_k^{a_k}(e_0) \,,\,
  \overline{\hat\z(t) \cdot e_\infty} \right)
\]
is a multiple of either $t_j$ or $\overline{t}{}_j$.  This establishes the desired \eqref{E:lim0}, and completes the proof of Theorem \ref{T:extn}.  \hfill \qed

%----------------------------------------------------------
\subsection{Constancy on fibres} \label{S:cnst}
%----------------------------------------------------------

\begin{theorem} \label{T:cnst}
The function $h : X \to \bR$ is constant on $\hPhiS$--fibres.
\end{theorem}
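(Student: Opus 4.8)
The plan is to show that $h$ is constant on the fibres of $\hPhiS$ by working fibre-by-fibre and reducing to the connectedness of those fibres. Recall from the Stein factorization that the fibres of $\hPhiS$ are connected, compact algebraic subvarieties of $\olB$. So it suffices to prove that $h$ is \emph{locally} constant along each fibre; connectedness then upgrades this to global constancy. Since $h$ is continuous on $X$ (Theorem \ref{T:extn}) and its restriction to each stratum $Z_I^* \cap X$ is smooth, it is enough to check that, for each $I$ with $Z_I^* \cap A \neq \emptyset$, the restriction $h\big|_{Z_I^* \cap X}$ is constant on the intersections of $Z_I^*$ with $\hPhiS$--fibres, and that these stratum-wise constants agree across strata in the closure of a single fibre (the latter being automatic from continuity of $h$ once the former is known, since a fibre is connected and the strata it meets form a closed cover).

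First I would reduce to a single stratum. Fix a fibre $A'$ of $\hPhiS$ and a stratum $Z_I^* \cap X$ meeting $A'$; I want $h\big|_{Z_I^* \cap A'}$ constant. The key is that $h\big|_{Z_I^*}$ is a multiple of the Hodge norm-squared $h_I$ of a trivialization of $\Lambda^{\ot m}\big|_X$ — this is exactly the content of the part of Theorem \ref{T:h} proved in \S\ref{S:hI}, or one can use the local formula \eqref{E:hZJ}, namely $h\big|_{Z_J^* \cap U} = \tRe\,Q(\hat\z(t)\cdot e_0,\,\overline{\lambda\,\hat\z(t)\cdot e_\infty})$. The vector $\hat\z(t)\cdot e_0$ frames $F^\sfn(\Phi_I)$ (the induced limiting period map along the stratum), i.e.\ it is a local holomorphic section of $\Lambda\big|_{Z_I^*} = \Phi_I^*\cF^\sfn(H_\bC)$; likewise $\hat\z(t)\cdot e_\infty$ is a section of the ``$F_\infty$--analogue'' $\tdet(F^\sfw_\infty V_\bC)\ot\cdots$, which is monodromy-flat along the directions tangent to the fibre. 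Concretely: along $Z_I^* \cap A'$ the period map $\Phi_I$ is constant (a fibre of $\hPhiS$ is, after all, a fibre on which the period data does not vary), so $F^\sfn(\Phi_I)$ is a fixed line, $F_\infty$ is fixed, and the pairing $Q(\hat\z\cdot e_0, \overline{\hat\z\cdot e_\infty})$ is — up to the $S^1$--valued character ambiguity already handled in Lemma \ref{L:char} — determined by a fixed Hodge-theoretic quantity. Hence $h$ is constant there.

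More carefully, the clean way to organize this is via the interpretation of $h_I$ as a Hodge norm. On the stratum $Z_I^*$, $\Lambda\big|_{Z_I^*}$ carries its canonical Hodge metric (defined up to positive scalar, \S\ref{S:extnh-intro}), and $h_I$ is the squared norm of a \emph{fixed} holomorphic trivialization of $\Lambda^{\ot m}\big|_X$. Along a fibre of $\hPhiS$ contained in $Z_I^*$, the induced variation of Hodge structure is constant — this is precisely what it means to be a fibre of the Stein factorization of the completed period map — so the Hodge metric on $\Lambda\big|_{Z_I^*}$ is flat along the fibre, and the trivializing section of $\Lambda^{\ot m}$, being holomorphic and (after restricting to the fibre) a section of a trivial flat line bundle with a flat metric, must have constant norm-squared on each connected component of the fibre's intersection with the stratum. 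Combined with continuity of $h$ on $X$ and connectedness of the fibre, this gives the theorem. I would also invoke Theorem \ref{T:ggr}\emph{\ref{i:triv}} to know $\Lambda^{\ot m}\big|_X$ is genuinely trivial, so ``constant norm'' is an unambiguous statement independent of choices.

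The main obstacle I anticipate is the bookkeeping at the boundary between strata: a single $\hPhiS$--fibre $A'$ generally meets several strata $Z_I^*$, and I must ensure the stratum-wise constants glue to one global constant on $A'$. This is where continuity of $h$ on $X$ (Theorem \ref{T:extn}) does the work — a continuous function that is locally constant on each stratum of a connected stratified set, with the closure relations among strata, is globally constant — but one should be careful that $A'$ is connected (true, by Stein factorization) and that the strata meeting $A'$ do form a legitimate stratification of $A'$ with the right frontier condition. A secondary, more technical point is confirming that ``fibre of $\hPhiS$'' really does force the induced period maps $\Phi_I$ to be constant along $A' \cap Z_I^*$; this follows from the construction of $\hPhiS$ in \cite{GGRinfty} (the fibres of $\hPhiS$ are the connected components of fibres of $\PhiS$, and $\PhiS$ restricted to each $Z_I^*$ is $\Phi_I$ up to the equivalence relation), but it is the one place where I would need to cite the structure of the topological completion rather than argue from scratch.
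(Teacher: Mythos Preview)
There is a genuine gap. Your reduction to strata $Z_I^*$ with $Z_I^* \cap A \neq \emptyset$ does not suffice: the theorem asserts constancy of $h$ on \emph{every} $\hPhiS$--fibre $A' \subset X$, and such fibres will typically lie in strata $Z_J^*$ that do not meet $A$. Concretely, if $A \subset Z_I^*$ for some $I \neq \emptyset$, then nearby fibres $A'$ lie in $B = Z_\emptyset^*$ or in intermediate strata $Z_J^*$ with $\emptyset \subsetneq J \subsetneq I$; none of these meet $A$. For such strata Theorem~\ref{T:hI} is unavailable (its hypothesis is precisely $Z_J^* \cap A \neq \emptyset$), so you cannot identify $h\big|_{Z_J^*}$ with a multiple of $h_J$, and your argument that ``$\Phi_J$ constant along the fibre $\Rightarrow$ Hodge norm constant'' has no purchase. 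Indeed, on such a stratum the weight filtration governing the local structure is $W^J = W(\sum_{j\in J} N_j)$, which in general differs from $W$, and there is no a priori reason the pairing defining $h$ should be a Hodge norm for the $\Phi_J$--variation.

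The paper handles exactly this case with a separate argument: using that $\hat\zeta\big|_{Z_J^*}$ preserves $W^J$, together with Lemma~\ref{L:m} (which locates $e_0$ and $e_\infty$ in $W^J$) and the $Q$--isotropy of $W^J$, one shows directly that the map $w \mapsto Q(\exp(w)\cdot e_0,\overline{\exp(w)\cdot e_\infty})$ descends from $W^J_0(\fg_\bC)$ to $W^J_0(\fg_\bC)/W^J_{-1}(\fg_\bC)$. Since the period map $\Phi_J$ is, by construction, exactly the class of $\log\hat\zeta$ modulo $W^J_{-1}$, this gives constancy of $h$ on $\Phi_J$--fibres without ever identifying $h\big|_{Z_J^*}$ as a Hodge norm. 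Your proposal needs an argument of this type for the intermediate strata; the Hodge-metric interpretation alone does not reach them.
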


\begin{proof}
Because $h$ is defined by the period matrix representation, it is immediate that $h$ is locally constant on the fibres of $\Phi|_{B\cap X} = \PhiS|_{B \cap X}$, and therefore constant on the fibres of $\hPhiS|_{B\cap X}$.   

Suppose that $Z_J^* \cap A$ is nonempty.  Since the restriction of $h$ to $Z_J^* \cap X$ is a positive multiple of $h_J$ (Theorem \ref{T:hI}), it follows that $h$ is constant on the fibres of $\hPhiS|_{Z_J^* \cap X}$.

For the general case, recall the discussion of \S\S\ref{S:loc1}--\ref{S:loc2}.  The coordinate chart is centered at a point $b \in Z_J^* \cap X$.  The nilpotent operator $N = N_1+\cdots+N_k$ determines a weight filtration $W^J = W(N)$.  The fact that the restriction of $\z$ to $Z_J^* \cap U$ centralizes the nilpotent operators $N_j$ (\S\ref{S:Aloc}) implies that the restrictions of both $\z$ and $\hat\z$ to $Z_J^* \cap U$ preserve the weight filtration $W^J$.  By Lemma \ref{L:m} there exists $\sfw \le \sfm_J \le \sfm$ so that $e_0 \in W^J_{\sfm_J}(H_\bC)$ and $e_\infty \in W^J_{2\sfw-\sfm_J}(H_\bC)$.  The fact that $W^J$ is $Q$--isotropic, $Q(W^J_\ell , W^J_m) = 0$ for all $\ell+m < 2\,\sfw$, implies that the map $W^J_0(\fg_\bC) \to \bC$ given by $w \mapsto Q(\exp(w) \cdot e_0 , \overline{\exp(w) \cdot e_\infty})$ descends to a well-defined $W^J_0(\fg_\bC)/W^J_{-1}(\fg_\bC) \to \bC$.  It now follows from \eqref{E:hZJ} and the definition of $\Phi_J$ in \cite{GGR-part1} that $h$ is constant on the fibres of $\hPhiS$.
\end{proof}

%\begin{remark}
%In general, one does not expect $h$ to be constant on $\PhiS$--fibres because $\Gamma_X \subset \Gamma$.
%\end{remark}

\begin{corollary} \label{C:cnst}
The function $h : X \to \bR$ descends to a continuous function $h : \sX \to \bR$ on $\sX = \hPhiS(X) \subset \hatP$.
\end{corollary}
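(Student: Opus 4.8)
This corollary is formal once Theorems \ref{T:extn}, \ref{T:cnst} and \ref{T:ggr} are in hand, so the plan is simply to package them as a descent through a topological quotient map. First I would invoke Theorem \ref{T:cnst}: the function $h$ is constant on each fibre of $\hPhiS|_X$, i.e.\ on each set $(\hPhiS)^{-1}(y) \cap X$ with $y \in \sX$. Hence there is a unique \emph{set-theoretic} function $\bar h : \sX \to \bR$ with $\bar h \circ (\hPhiS|_X) = h$; the only content of the corollary is that $\bar h$ is continuous.

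To obtain continuity I would argue that $\hPhiS|_X : X \to \sX$ is a topological quotient map. By Theorem \ref{T:ggr}\emph{(i)} the restriction $\hPhiS|_X$ is proper, and the target $\hatP$ --- being a finite union of quasi-projective varieties (cf.\ the introduction) --- is locally compact and Hausdorff; a proper continuous map to a locally compact Hausdorff space is closed. Therefore $\hPhiS|_X$ carries closed subsets of $X$ to closed subsets of $\hatP$, in particular to closed subsets of $\sX = \hPhiS(X)$; being a continuous closed surjection onto $\sX$, it is a quotient map. Now for any closed $C \subseteq \bR$ one has $(\hPhiS|_X)^{-1}(\bar h^{-1}(C)) = h^{-1}(C)$, which is closed in $X$ by the continuity of $h$ established in Theorem \ref{T:extn}; since $\hPhiS|_X$ is a quotient map this forces $\bar h^{-1}(C)$ to be closed in $\sX$. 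Hence $\bar h$ is continuous, and --- identifying $\bar h$ with $h$ as in the statement --- this is precisely the assertion of the corollary.

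I do not expect any genuine obstacle here: all the analytic work has already been carried out in Theorems \ref{T:extn} and \ref{T:cnst}, and what remains is the soft topological fact that a continuous function constant on the fibres of a proper surjection factors continuously through the image. The one point to treat with a little care is the precise meaning of ``constant on $\hPhiS$-fibres'': since $X$ need not contain every fibre of $\hPhiS$, this must be read as constancy on $(\hPhiS)^{-1}(y)\cap X$, which is exactly the fibre of the corestricted map $\hPhiS|_X : X \to \sX$, so that the descent above is legitimate.
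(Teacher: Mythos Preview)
Your argument is correct and is precisely the standard descent-through-a-quotient-map justification one would give; the paper itself offers no proof of Corollary~\ref{C:cnst}, treating it as an immediate consequence of Theorems~\ref{T:extn} and~\ref{T:cnst} together with the properness in Theorem~\ref{T:ggr}. The only point you might want to source more carefully is that $\hatP$ (hence $\sX$) is Hausdorff and locally compact, which is needed to conclude that the proper map $\hPhiS|_X$ is closed; this is established in \cite{GGRinfty} rather than in the present paper.
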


%----------------------------------------------------------
\subsection{Relationship to Hodge norms} \label{S:hI}
%----------------------------------------------------------

\begin{theorem} \label{T:hI}
Assume $Z_J^* \cap A$ is nonempty.  The restriction of $h$ to $Z_J^* \cap X$ is a positive multiple of $h_J$, where $h_J^\sfa$ is the Hodge norm-squared of $\Lambda^{\ot \sfa} \big|_{Z_J^* \cap A}$.
\end{theorem}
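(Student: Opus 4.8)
The plan is to identify the restriction $h\big|_{Z_J^*\cap X}$, which by \eqref{E:hZJ} is
$\tRe\,Q\bigl(\hat\z(t)\cdot e_0,\,\overline{\lambda\,\hat\z(t)\cdot e_\infty}\bigr)$ over a coordinate chart, with the Hodge norm-squared of a trivialization of $\Lambda\big|_{Z_J^*\cap A}$, and to check that the proportionality constant is positive. First I would recall from \S\ref{S:hI} the definition of $h_J$: the induced variation of Hodge structure along $Z_J^*$ has Hodge bundle $\cF^{\sfn_J}$ of rank one, and $h_J$ is the Hodge norm-squared (with respect to the polarization $Q$ and the limit mixed Hodge structure) of the chosen trivialization of $\Lambda\big|_{Z_J^*}$. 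The key point is that, by \S\ref{S:loc1}--\S\ref{S:eta0}, the period matrix representation $\eta_0$ of the rank-one bundle $\Lambda = \Phi^*\cF^\sfn(H_\bC)$ restricts, after passing to the limit $t_i\to 0$ for $i\in J$, to $\hat\z(t)\cdot e_0$; and similarly $\hat\z(t)\cdot e_\infty$ frames the conjugate line $H^{0,2\sfn-\sfm}_{W,F}$ appearing in the polarization form.

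The main step is therefore to show that, for a weight-$\sfn$ polarized Hodge structure on the rank-one space $F^\sfn(H_\bC) = \bC\,v$ with $v$ a section of $\Lambda$, the Hodge norm-squared of $v$ is a positive multiple of $\tRe\,Q(v,\overline{\lambda\,e_\infty\text{-component of }v})$. Concretely: along $Z_J^*$ the bundle $\cF^\sfn(H_\bC)$ is not yet pure (its Hodge structure sits in the limit MHS), so the relevant norm is the one built from the splitting $H_\bC = \bigoplus H^{p,q}_{W^J,F}$. By Lemma \ref{L:m} we have $e_0 \in W^J_{\sfm_J}$, $e_\infty \in W^J_{2\sfn-\sfm_J}$, with $\tspan\{e_0\} = H^{\sfn,\sfm_J-\sfn}_{W^J,F}$ and $\tspan\{e_\infty\}$ its (complex-conjugate, up to lower-weight terms) partner $H^{2\sfn-\sfm_J,0}_{W^J,F}$; one then identifies $h_J$, up to a positive scalar depending only on the normalization \eqref{E:Q} of the basis and the constant $\lambda$ of \eqref{E:lambda}, with $Q(\hat\z\cdot e_0,\overline{\hat\z\cdot e_\infty})$. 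The fact that $Q(v,\overline v)>0$ on $F^\sfn$ for the polarized MHS (first and second Hodge--Riemann relations for the limit MHS, cf.\ \cite{MR3701983}) gives positivity. Here one must be careful that the expression $Q(\hat\z\cdot e_0,\overline{\hat\z\cdot e_\infty})$ is already real (no $\tRe$ needed) because $\hat\z\cdot e_\infty$ is, up to the monodromy character killed in Lemma \ref{L:char} and the fixed constant $\lambda$, the complex conjugate of $\hat\z\cdot e_0$ modulo terms that are $Q$-orthogonal to $F^\sfn$ by \eqref{E:Wj}.

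The expected main obstacle is bookkeeping the relation between the \emph{a priori} abstract Hodge norm $h_J$ on $\Lambda\big|_{Z_J^*}$ — which is defined intrinsically via the induced period map $\Phi_J$ and its limit MHS — and the concrete expression $Q(\hat\z\cdot e_0,\overline{\hat\z\cdot e_\infty})$ coming from our fixed global framing. The two depend on choices (the basis $\{e_j\}$, the limit MHS $(W,F,\s)$, and the trivialization of $\Lambda\big|_X$), and one must verify that changing these only rescales by a positive constant; this is where the normalization \eqref{E:Q}, the definition \eqref{E:lambda} of $\lambda$, and Remark \ref{R:anotherchoice} (controlling other choices of limit MHS along $A$) are used. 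A secondary point is to check that $\sfm_J$, which can vary over $Z_J^*$ in principle, is in fact constant on $Z_J^*\cap X$ — this follows because $\hat\z\big|_{Z_J^*\cap U}$ preserves $W^J$ and $e_0$ generates $F^\sfn$, so the weight of $e_0$ in $W^J$ is locally constant, hence constant on the connected $Z_J^*\cap X$. Once these identifications are in place, the theorem follows from \eqref{E:hZJ} together with positivity of the Hodge form on $F^\sfn$.
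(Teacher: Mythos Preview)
There is a genuine gap in your argument: you never bring in the nilpotent operator that defines the polarization on the limit mixed Hodge structure, and as a result both your identification of $h_J$ and your positivity claim are unjustified.

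Concretely, your line ``$Q(v,\overline v)>0$ on $F^\sfn$ for the polarized MHS'' is not correct for a genuinely mixed limit.  Here $e_0\in H^{\sfn,\sfm-\sfn}_{W,F}$ sits in $W$--weight $\sfm>\sfn$, and the Hodge--Riemann form on $\tGr^W_\sfm$ is $Q(\,\cdot\,,N^{\sfm-\sfn}\,\overline{\,\cdot\,}\,)$, not $Q(\,\cdot\,,\overline{\,\cdot\,})$.  Likewise, your description of $H^{2\sfn-\sfm,0}_{W,F}$ as ``the complex-conjugate partner'' of $H^{\sfn,\sfm-\sfn}_{W,F}$ is off: conjugation sends $H^{\sfn,\sfm-\sfn}_{W,F}$ to $H^{\sfm-\sfn,\sfn}_{W,F}$ modulo lower weight, not to $H^{2\sfn-\sfm,0}_{W,F}$.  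So there is no direct reason for $Q(\hat\z\cdot e_0,\overline{\hat\z\cdot e_\infty})$ to be the Hodge norm, or even to be real.

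What the paper does instead is first write the Hodge norm in its intrinsic form
\[
  h_J \ = \ \bi^{2\sfn-\sfm}\,Q\!\left(\exp(f_J)\cdot e_0\,,\,N^{\sfm-\sfn}\,\overline{\exp(f_J)\cdot e_0}\right),
\]
with $N=N_1+\cdots+N_k$ (note that since $Z_J^*\cap A\not=\emptyset$ one has $W=W(N)$, so there is no separate $\sfm_J$ to track).  The bridge to your expression is then a separate lemma (Lemma~\ref{L:hI}): $\overline{\lambda\,e_\infty}$ is a \emph{positive} multiple of $\bi^{2\sfn-\sfm}N^{\sfm-\sfn}\overline{e_0}$, because $N^{\sfm-\sfn}e_0$ lands in the one-dimensional $H^{2\sfn-\sfm,0}_{W,F}=\bC\,e_\infty$ and the sign is fixed by $Q(e_0,\overline{\lambda\,e_\infty})=1$ together with the polarization inequality $\bi^{2\sfn-\sfm}Q(e_0,N^{\sfm-\sfn}\overline{e_0})>0$.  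Only after this lemma can one replace $N^{\sfm-\sfn}\overline{e_0}$ by $\overline{\lambda\,e_\infty}$ and conclude that $h\big|_{Z_J^*\cap X}$ is a positive multiple of $h_J$.  Your proposal skips exactly this step.
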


\noindent Since $h_J$ is the pull-back of a metric with negative curvature, cf.~\cite[(4.66)]{MR0229641}, we immediately obtain

\begin{corollary} \label{C:hI}
Assume $Z_J^* \cap A$ is nonempty. The restriction of $-\log h$ to $Z_J^* \cap X$ is psh.  Given $v \in T(Z_J^* \cap X)$, we have $-\ddb\log h(v,\overline v) = 0$ if and only if $v$ is tangent to a $\PhiS$--fibre (equivalently, a $\Phi_J$--fibre).
\end{corollary}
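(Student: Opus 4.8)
The plan is to obtain Corollary \ref{C:hI} as a formal consequence of Theorem \ref{T:hI} (which I take as given) together with the classical Griffiths curvature estimate for the bottom Hodge bundle; essentially all of the analytic content sits in Theorem \ref{T:hI}, and what remains is bookkeeping.

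First I would reduce to a statement about $-\log h_J$ alone. Theorem \ref{T:hI} gives $h = c\,h_J$ on $Z_J^*\cap X$, and it matters that the ``positive multiple'' $c$ is a constant --- recall from \S\ref{S:hI} that the Hodge metric on $\Lambda\big|_{Z_J^*}$, hence $h_J$, is canonically defined only up to a positive scalar. Since $h\big|_{Z_J^*\cap X}$ is smooth by Theorem \ref{T:extn}, we then have $-\log h = -\log h_J - \log c$ on $Z_J^*\cap X$, so $\ddb(-\log h) = \ddb(-\log h_J)$ there as smooth $(1,1)$-forms, and it suffices to prove: $-\log h_J$ is psh on $Z_J^*\cap X$, and $-\ddb\log h_J(v,\overline v) = 0$ for $v\in T(Z_J^*\cap X)$ exactly when $v$ is tangent to a $\Phi_J$--fibre.

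Next I would bring in the curvature of the Hodge metric. As in the description of $\Lambda$ in \S\ref{S:induced} (cf.~\eqref{E:F(H)}), the induced period map $\Phi_J : Z_J^*\cap X \to \Gamma_J\bs D_J$ satisfies $\Lambda\big|_{Z_J^*\cap X} = \Phi_J^*\cF^\sfn(H_\bC)$, so $h_J$ is the $\Phi_J$--pullback of the Hodge norm-squared of a local frame of the rank-one bundle $\cF^\sfn(H_\bC)$ over $\Gamma_J\bs D_J$. By Griffiths' curvature computation \cite[(4.66)]{MR0229641} for this metric --- the same input used in the second and third proofs in \S\ref{S:pg}, via \cite{MR0259958} --- the Levi form of $-\log$ of the Hodge norm-squared of a local frame, taken on $D_J$, restricts to a \emph{positive-definite} Hermitian form on the horizontal tangent subbundle. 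Since $d\Phi_J$ is horizontal by Griffiths transversality, pulling back along $\Phi_J$ yields $-\ddb\log h_J(v,\overline v)\ge0$ for all $v\in T(Z_J^*\cap X)$, with equality precisely when $d\Phi_J(v)=0$. This establishes that $-\log h_J$ (hence $-\log h$) is psh on $Z_J^*\cap X$ and identifies the degeneracy locus of its Levi form with $\ker d\Phi_J$.

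Finally I would reconcile the two descriptions of that locus. At a point of $Z_J^*\cap X$ the kernel $\ker d\Phi_J$ is the tangent space to the fibre of $\Phi_J$ through that point; and by the construction of $\PhiS$ in \cite{GGRinfty} the fibres of $\Phi_J$ along $Z_J^*$ are exactly the traces on $Z_J^*$ of the $\PhiS$--fibres. Hence for $v\in T(Z_J^*\cap X)$, ``$v$ tangent to a $\Phi_J$--fibre'' and ``$v$ tangent to a $\PhiS$--fibre'' are the same condition, which finishes the corollary. There is no serious obstacle; the two points to keep an eye on are (i) that the multiple in Theorem \ref{T:hI} is a genuine scalar constant, so that the additive ambiguity of $-\log h$ is killed by $\ddb$, and (ii) that one uses \cite[(4.66)]{MR0229641} in its sharp form --- positive-definiteness, not merely semi-positivity, on horizontal vectors --- so that the kernel of the Levi form downstairs is \emph{exactly} $\ker d\Phi_J$ rather than possibly larger.
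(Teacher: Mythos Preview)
Your proposal is correct and follows the same route as the paper: the paper's justification is the single sentence preceding the corollary, namely that $h_J$ is the pull-back of a metric with negative curvature \cite[(4.66)]{MR0229641}, from which the corollary is asserted to follow immediately via Theorem~\ref{T:hI}. Your write-up simply unpacks this one-liner with more care (the constancy of the multiple, the strict positivity on horizontal vectors, and the identification of $\Phi_J$--fibres with $\PhiS$--fibres along $Z_J^*$).
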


\begin{proof}[Proof of Theorem \ref{T:hI}]
Recall the discussion of \S\S\ref{S:loc1}--\ref{S:loc2}.  The coordinate chart is centered at a point $b \in Z_J^* \cap X$.  Assume that $Z_J^* \cap A$ is nonempty.  Set $N = N_1+\cdots+N_k$.  Then $W = W(N)$.  We claim that the Hodge norm-squared of $\Lambda^{\ot \sfa}\big|_{Z_J^* \cap U}$ is given by $h_J^\sfa$, where
\begin{equation}\label{E:hJ}
  h_J \ = \ 
  \bi^{2\sfw-\sfm} Q \left( \exp(f_J) \cdot e_0 \,,\, 
  N^{\sfm-\sfw} \,\overline{\exp(f_J) \cdot e_0} \right) \,.
\end{equation}
To see this, first recall that $\fz_J \subset W_0(\fg_\bC)$.  By construction $e_0 \in W_{\sfm}(H_\bC)$ and $e_\infty \in W_{2\sfw-\sfm}(H_\bC)$.  Then the fact that $W$ is $Q$--isotropic
\begin{equation}\label{E:QW}
  Q \left( W_\ell(H) \,,\, W_m(H) \right) \ = \ 0
  \qquad \forall \quad \ell+m < 2\,\sfw
\end{equation}
implies that the map $\fz_J \to \bC$ given by 
\[
  w \ \mapsto \ 
  Q \left( \exp(w) \cdot e_0 \,,\, 
  N^{\sfm-\sfw}\,\overline{\exp(w) \cdot e_0} \right)
\] 
descends to a well-defined $\fz_J/W_{-1}(\fz_J) \to \bC$.  Now to establish \eqref{E:hJ} it suffices to point out that $\{ f_J \hbox{ mod } W_{-1}(\fz_J) \}$ is the local period matrix representation of the period map $\Phi_J : Z_J^* \to \Gamma_J \bs D_J$.

Suppose for the moment that $b \in A$.  Then $f_J \equiv 0$ modulo $W_{-1}(\fz_J)$ along $Z_J^* \cap A \cap U$, and \eqref{E:hJ} implies
\begin{equation}\label{E:pos1}
  \bi^{2\sfw-\sfm} Q \left( e_0 \,,\, 
  N^{\sfm-\sfw}\,\overline{e_0} \right) \ > \ 0 \,.
\end{equation}
Since $f_J$ takes value in the centralizer $\fz_J$ of the $\{N_j\}_{j=1}^k$, it follows from Lemma \ref{L:hI} that
\begin{equation}\label{E:Q=hJ}
  Q \left( \exp(f_J) \cdot e_0 \,,\, 
  \overline{\lambda\,\exp(f_J) \cdot e_\infty} \right)
  \quad\hbox{\emph{is a positive multiple of}}\quad 
  h_J
\end{equation}
By \eqref{E:tildeh}, \eqref{E:lima} and \eqref{E:limQ}
\[
  Q \left( \exp(f_J) \cdot e_0 \,,\, 
  \overline{\lambda\,\exp(f_J) \cdot e_\infty} \right)
  \ = \ h\big|_{Z_J^* \cap U} \,.
\]
This establishes the theorem.
\end{proof}

\begin{lemma} \label{L:hI}
Recall the scalar $\lambda$ defined by \eqref{E:lambda}.  Suppose that $N \in W_{-2}(\fg_\bR)$ polarizes some mixed Hodge structure $(W,F')$ arising along $A$.  Then $\overline{\lambda \, e_\infty}$ is a positive multiple of $\bi^{2\sfw-\sfm}\, N^{\sfm-\sfw}\, \overline{e_0}$.
\end{lemma}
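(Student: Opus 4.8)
The plan is to identify $N^{\sfm-\sfn}(e_0)$ exactly — I will show it is a \emph{nonzero} scalar multiple $c\,e_\infty$ of $e_\infty$ — and then read off the sign of the resulting comparison constant by pairing against $e_0$ under $Q$. Granting this, since $N \in \fg_\bR$ we have $N^{\sfm-\sfn}(\overline{e_0}) = \overline{N^{\sfm-\sfn}(e_0)} = \overline{c}\,\overline{e_\infty}$, so $\bi^{2\sfn-\sfm}N^{\sfm-\sfn}(\overline{e_0}) = \overline{c}\,\bi^{2\sfn-\sfm}\,\overline{e_\infty}$ is a nonzero complex multiple of $\overline{\lambda\,e_\infty} = \overline{\lambda}\,\overline{e_\infty}$; write (uniquely) $\overline{\lambda\,e_\infty} = \kappa\,\bi^{2\sfn-\sfm}N^{\sfm-\sfn}(\overline{e_0})$ with $\kappa \in \bC^\times$. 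Applying $Q(e_0,\,\cdot\,)$: the left-hand side is $Q(e_0,e_\sfd) = 1$ by \eqref{E:lambda} and \eqref{E:Q}, while the right-hand side is $\kappa\,\bi^{2\sfn-\sfm}Q(e_0,N^{\sfm-\sfn}\overline{e_0})$. The factor $\bi^{2\sfn-\sfm}Q(e_0,N^{\sfm-\sfn}\overline{e_0})$ is a positive real — this is exactly the Hodge--Riemann positivity \eqref{E:pos1} for the mixed Hodge structure $(W,F')$ polarized by $N$ (it records that $\overline{e_0}$ spans the primitive $(\sfn,\sfm-\sfn)$-component of $\tgr^W_\sfm$, on which the Weil operator of $F'$ acts by $\bi^{\sfn-(\sfm-\sfn)}$). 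Hence $\kappa$ is a positive real, which is the assertion.

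It remains to justify $N^{\sfm-\sfn}(e_0) = c\,e_\infty$ with $c \ne 0$. Because $N$ polarizes a limiting mixed Hodge structure $(W,F',\s')$ arising along $A$, Remark \ref{R:anotherchoice} gives $N \in \s' \subset \bigoplus_{p,q\le-1}\fg^{p,q}_{W,F}$. Starting from $e_0 \in H^{\sfn,\sfm-\sfn}_{W,F}$ (see \eqref{E:H}) and applying \eqref{E:g(H)} repeatedly, $N^{\sfm-\sfn}(e_0) \in \bigoplus_{p\le 2\sfn-\sfm,\ q\le0}H^{p,q}_{W,F}$; since Deligne bidegrees satisfy $q\ge0$ and $h^{p,0}_{W,F}=0$ for $p\ne 2\sfn-\sfm$ by \eqref{E:h}, this forces $N^{\sfm-\sfn}(e_0) \in H^{2\sfn-\sfm,0}_{W,F} = \bC\,e_\infty$, i.e.\ $N^{\sfm-\sfn}(e_0) = c\,e_\infty$ for some $c \in \bC$. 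For the non-vanishing: $W = W(N)$ is the weight filtration of $N$ centered at $\sfn$, so $N^{\sfm-\sfn}$ descends to the Lefschetz isomorphism $\tgr^W_\sfm(H_\bC) \to \tgr^W_{2\sfn-\sfm}(H_\bC)$. By the choices in \S\ref{S:einfty}, $e_0 \notin W_{\sfm-1}(H_\bC)$ and $e_\infty \notin W_{2\sfn-\sfm-1}(H_\bC)$, so both have nonzero image in the respective graded pieces; the isomorphism carries the class of $e_0$ to $c$ times the (nonzero) class of $e_\infty$, forcing $c \ne 0$.

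The step I expect to require the most care is this identification, because neither half of it is self-contained: the ``Deligne placement'' computation only locates $N^{\sfm-\sfn}(e_0)$ on the line $\bC\,e_\infty$ (it does not see the coefficient), while the non-vanishing genuinely needs the $\fsl_2$/Lefschetz structure attached to $W = W(N)$ together with the fact that $e_0$ and $e_\infty$ sit at the \emph{top} of their respective steps of $W$ (both of which follow from the definitions of $\sfm$ and $e_\infty$ in \S\ref{S:einfty}). Once $c \ne 0$ is in hand, the remainder is routine bookkeeping with complex conjugation and the two normalizations \eqref{E:Q} and \eqref{E:pos1}; in particular the apparent $\bi^{2\sfn-\sfm}$ ``twist'' is harmless, as it occurs identically on both sides of the comparison.
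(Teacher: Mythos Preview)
Your proof is correct and follows essentially the same route as the paper's: locate $N^{\sfm-\sfn}e_0$ in $\bC\,e_\infty$ via the Deligne bigrading (Remark \ref{R:anotherchoice}, \eqref{E:g(H)}, \eqref{E:h}), then read off the sign of the comparison constant by pairing against $e_0$ with $Q$, using $Q(e_0,\overline{\lambda e_\infty})=1$ from \eqref{E:Q}--\eqref{E:lambda} together with \eqref{E:pos1}. Your separate Lefschetz-isomorphism argument for $c\ne0$ is correct but not strictly required, since \eqref{E:pos1} already forces $N^{\sfm-\sfn}\overline{e_0}\ne0$ (the paper simply omits this step for that reason).
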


\begin{proof}
By Remark \ref{R:anotherchoice}, 
\[
  N \ \in \ \bigoplus_{p,q\le-1} \fg^{p,q}_{W,F} \,.
\]
Recall that $e_0 \in H^{\sfw,\sfm-\sfw}_{W,F}$.  Then \eqref{E:g(H)}, and \eqref{E:h} imply $N^{\sfm-\sfw} e_0 \in H^{2\sfw-\sfm,0}_{W,F}$.  It follows from $H^{2\sfw-\sfm,0}_{W,F} = \tspan_\bC\{e_\infty\}$ that $N^{\sfm-\sfw} e_0$ is a multiple of $e_\infty$.  By \eqref{E:Q} and \eqref{E:lambda} we have
\begin{equation} \nonumber %\label{E:pos2}
  Q( e_0 \,,\, \overline{\lambda\,e_\infty} ) \ = \ 1 \,.
\end{equation}
Taken with \eqref{E:pos1}, this implies the lemma.
\end{proof}

%----------------------------------------------------------
\section{The hermitian symmetric case} \label{S:herm}
%----------------------------------------------------------

\begin{theorem} \label{T:herm}
If $D$ is hermitian, then the function $h : X \to \bR$ is smooth, and $-\log h$ is plurisubharmonic.
\end{theorem}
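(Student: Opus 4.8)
The plan is to upgrade the conclusions of Theorems~\ref{T:extn}, \ref{T:cnst} and \ref{T:hI} in the hermitian case, where the extra rigidity of horizontal period maps in a tube-type (classical) situation forces two improvements: (i) the local coordinate expression for $Q(\eta_0,\overline{\eta_\infty})$ is genuinely holomorphic in the $t_j$ (no surviving $\log t_j$ terms), so that $h$ is smooth across all strata; and (ii) the candidate function $-\log h$, which is already psh along each open stratum $Z_I^*\cap X$ by Corollary~\ref{C:hI}, remains psh when the strata are glued together. For (i), I would revisit \S\ref{S:eta0}: the polynomial $\theta(t)\cdot e_0 = \sum_{|a|\le \sfm-\sfn} c_a\,\ell(t_1)^{a_1}\cdots\ell(t_k)^{a_k}\,N_1^{a_1}\cdots N_k^{a_k}(e_0)$ contributes $\log$-terms only through the $c_a$ with $|a|>0$. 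In the hermitian case the weight filtration of each $N_j$ is very short — the only nonzero graded pieces of a limit MHS attached to a classical domain sit in a narrow band — so $\sfm=\sfn$ (equivalently $F^\sfn(H_\bC)\subset W_\sfn$ but not $W_{\sfn-1}$ forces $\sfm-\sfn=0$ when the Hodge structure on $H$ has the degenerate diamond dictated by $h^{\sfn,0}=1$ together with the hermitian symmetry). Then the sum over $a$ collapses to the $a=0$ term and \eqref{E:lim0} is vacuous: $Q(\eta_0,\overline{\eta_\infty}) = Q(\hat\z(t)\cdot e_0,\overline{\hat\z(t)\cdot e_\infty})$ exactly, with $\hat\z(t)$ holomorphic in the $t_j$ after the substitutions of \S\ref{S:loc2} (the $\tAd_{\hat\theta_I}(f_I)$ are \emph{polynomial} in $\ell(t_j)$ of degree bounded by the nilpotency order of the $N_j$, which is $1$ here). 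Hence $h$ extends \emph{holomorphically in real-analytic dependence} — more precisely $h = \tRe\,Q(\hat\z(t)\cdot e_0,\overline{\lambda\hat\z(t)\cdot e_\infty})$ with $\hat\z$ holomorphic — and is therefore smooth on all of $U$, proving the first assertion.

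For (ii), the point is that $h$ is, up to the trivialization fixed in \S\ref{S:extnh-intro}, the Hodge norm-squared of the line bundle $\Lambda|_X$ — not merely stratum by stratum, but as a single smooth metric on $\Lambda|_X$ (this is what smoothness from step (i) buys us, since both $h$ and the Hodge norm of the holomorphic section $\eta_0$ of $\Lambda$ agree on the dense open $B\cap X$ and are continuous). In the hermitian case $\Lambda$ is (a power of) the automorphic line bundle, whose Hodge metric has curvature the Bergman/Kähler form of $D$ pulled back by the period map; this is the content of Griffiths--Schmid \cite{MR0259958}, used in \S\ref{S:pg} above. Because $D$ is hermitian symmetric the horizontal distribution is the full holomorphic tangent space, so the curvature inequality of \cite[(8.1)]{MR0259958} — positivity of the Levi form of $-\log(\text{Hodge norm})$ on the horizontal bundle — becomes positivity on \emph{all} of $T X$, i.e. $\bi\ddb(-\log h)\ge 0$ on $X$, with the null directions being exactly the fibre directions of $\PhiS$. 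Concretely: write $-\log h = \PhiS^*(\text{potential of the Bergman metric})$ locally, which makes sense because by Theorem~\ref{T:cnst} $h$ descends to $\sX$; then psh-ness is the statement that the Bergman metric is a positive $(1,1)$-form, which is classical.

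The main obstacle I expect is the bookkeeping in step (i): verifying that the hermitian hypothesis really does force $\sfm=\sfn$ and nilpotency order $1$ for the relevant $N_j$ (acting on $H$, not on $V$). One should argue via the structure of boundary components of a classical domain — the local monodromy logarithms act on $V$ with $N^2=0$ on $\tgr^W$ off the middle, and the induced action on the determinant-type module $H$ inherits this — and then translate the vanishing $h^{p,0}_{W,F}=0$ for $p\ne 2\sfn-\sfm$ from \eqref{E:h} into the collapse of the Hodge diamond of $(W,F)$ on $H$ to a single row, which is precisely what happens for $\Lambda$ over a hermitian $D$. Once $\sfm=\sfn$, everything downstream is automatic: $e_0$ and $e_\infty$ (and $e_\sfd$) coincide up to conjugation and scalar, $\tilde h$ in \eqref{E:tildeh} is literally $|Q(\eta_0,\overline{\lambda\eta_0})|$ up to sign, and smoothness plus psh-ness follow from the Griffiths--Schmid curvature computation as above. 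I would also remark, as the paper already notes after Theorem~\ref{T:h}, that this is sharp: outside the hermitian case the $\log$-terms genuinely survive and $-\log h$ can fail to be psh, cf.~\cite{Robles-pseudocnvx-eg}.
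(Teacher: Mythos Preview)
Your argument rests on the claim that in the hermitian case $\sfm=\sfn$, so that the Hodge diamond of $(W,F)$ on $H$ ``collapses to a single row'' and the sum in \S\ref{S:eta0} reduces to the $a=0$ term. This is false: look at the hermitian examples in \S\ref{S:hd1}, where for $\sfn=3$ one has $\sfm=4,5,6$ whenever the degeneration is nontrivial. The point is that while $N_j^2=0$ on $V$ in the classical case, the induced action on the tensor/exterior module $H$ has nilpotency order up to $\sfm-\sfn+1$, which can be large. So the $|a|>0$ terms in $\theta(t)\cdot e_0$ are genuinely present, and your mechanism for killing the $\log$--terms does not work.

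The paper's mechanism is different and does not use $\sfm=\sfn$. The key observation (Lemma~\ref{L:herm}) is that when $D$ is hermitian the Deligne splitting of $\fg$ satisfies $\fg^{p,q}_{W,F}=0$ for $|p|>1$ or $|q|>1$; hence $\fs_F^\perp=\fg^{-1,1}\oplus\fg^{-1,0}\oplus\fg^{-1,-1}$ and $W_{-2}(\fg_\bC)=\fg^{-1,-1}$, and the bracket relation \eqref{E:adg} gives $[\fs_F^\perp,W_{-2}(\fg_\bC)]=0$. Thus $\log\zeta$ \emph{centralizes} every $N_j$. This has two consequences: first, $\hat\zeta(t)=\zeta(t)$ is honestly holomorphic on $U$ (the $\tAd_{\hat\theta_I}$ act trivially on the $f_I$); second, Corollary~\ref{C:Q0} now applies for \emph{every} $a$ with $|a|>0$, so those terms in $Q(\eta_0,\overline{\eta_\infty})$ vanish \emph{identically}, not merely in the limit. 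This yields \eqref{E:Qherm} and smoothness.

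Your plurisubharmonicity argument inherits the same error: you identify $h$ with the Hodge norm-squared of $\Lambda$ on $B\cap X$, but that norm is $\bi^\sfn Q(\eta_0,\overline{\eta_0})$, which agrees with $\tRe\,Q(\eta_0,\overline{\lambda\eta_\infty})$ only when $e_0=e_\infty$, i.e.\ when $\sfm=\sfn$. The paper instead uses Lemma~\ref{L:hI} to rewrite $h$ as $\bi^{2\sfn-\sfm}Q(\hat\zeta(t)\cdot e_0,\,N^{\sfm-\sfn}\overline{\hat\zeta(t)\cdot e_0})$, and then observes that since $\hat\zeta$ centralizes $N$, the nilpotent $N$ polarizes $(W,\hat\zeta(t)\cdot F)$ for every $t$; so $h$ is a Hodge metric for the graded polarized Hodge structures $F^p(\tGr^W_\ell)$ on all of $X$, and the Griffiths curvature computation applies.
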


The essential point in the proof of Theorem \ref{T:herm} is the following lemma. 

\begin{lemma} \label{L:herm}
If $D$ is hermitian, then the subspace $\fs_F^\perp$ of \eqref{E:sFperp} centralizes the nilpotent elements $N_1,\ldots,N_k$ of \S\ref{S:loc1}.
\end{lemma}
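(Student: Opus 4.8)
The statement to prove is that when $D$ is hermitian, the nilpotent subalgebra $\fs_F^\perp = \bigoplus_{p<0}\fg^{p,q}_{W,F}$ centralizes the local monodromy logarithms $N_1,\ldots,N_k$. The key structural input is that for a hermitian symmetric domain, the Griffiths horizontal subbundle is the \emph{entire} holomorphic tangent bundle, which at the infinitesimal level says that the Hodge decomposition of $\fg$ is concentrated in a narrow band. Concretely, for the pure Hodge structure $F\in\check D$ on $\fg_\bC$, one has $\fg_\bC = \fg^{-1,1}\oplus\fg^{0,0}\oplus\fg^{1,-1}$ (the ``abelian'' or ``tube-type-like'' grading characteristic of the hermitian case), so the $(-1)$-eigenspace $\fg^{-1}$ of the relevant grading element is abelian: $[\fg^{-1},\fg^{-1}]=0$. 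I would begin the proof by recording this fact precisely, citing the standard characterization of hermitian $D$ (e.g.\ via \cite{MR0259958} or the structure theory of the isotropy representation), and then transport it to the mixed setting.

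**Main steps.** First, I would note that by \eqref{E:N} the operators $N_1,\ldots,N_k$ lie in $\bigoplus_{p,q\le -1}\fg^{p,q}_{W,F}\subset\fs_F^\perp\cap W_{-2}(\fg_\bC)$; in particular each $N_j$ has $F$-Hodge degree $p\le -1$. Second, I would argue that in the hermitian case the $F$-degrees appearing in $\fg_\bC$ are only $-1,0,1$ (this is the essential use of the hypothesis, applied to the grading by the first index $p$ of the Deligne splitting, which refines the Hodge grading of the pure structure $F$). Hence $N_j\in\fg^{-1,\bullet}_{W,F}$ exactly — the $F$-degree is forced to be $-1$, not $\le -1$. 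Third, $\fs_F^\perp = \bigoplus_{p<0}\fg^{p,q}_{W,F} = \bigoplus_{q}\fg^{-1,q}_{W,F}$, again because no degrees $p\le -2$ occur. Fourth, by \eqref{E:adg} the bracket satisfies $[\fg^{-1,q}_{W,F},\fg^{-1,s}_{W,F}]\subset\fg^{-2,q+s}_{W,F}=0$. Since $N_j\in\fg^{-1,\bullet}_{W,F}$ and $\fs_F^\perp\subset\fg^{-1,\bullet}_{W,F}$, every bracket $[\xi,N_j]$ with $\xi\in\fs_F^\perp$ vanishes, which is precisely the assertion that $\fs_F^\perp$ centralizes $N_1,\ldots,N_k$.

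**Anticipated obstacle.** The one step that needs care — and which I expect to be the crux — is justifying that the Deligne bigrading $\fg_\bC=\bigoplus\fg^{p,q}_{W,F}$ of the \emph{mixed} Hodge structure on $\fg$ has its first index $p$ confined to $\{-1,0,1\}$ in the hermitian case. The hermitian hypothesis is a statement about the \emph{pure} Hodge structure $F\in\check D$ (or about $D$ as a symmetric space), so one must check that passing to a limiting mixed Hodge structure $(W,F)$ does not introduce new $F$-degrees. This should follow because the $F$-filtration of the mixed Hodge structure on $\fg$ is the one induced by $F\in\check D$ as a point of the compact dual, and the bound $p\in\{-1,0,1\}$ on $\tgr_F$ is a closed condition that holds on all of $\check D$ (it is detected by the weights of a cocharacter, and $\check D$ is $G_\bC$-homogeneous); alternatively one invokes that $N_j$ preserves the Hodge filtration $F_\infty$ (as already used in \S\ref{S:eta-infty}, \S\ref{S:eta0}) together with horizontality to pin down its single $F$-degree. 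I would present whichever of these is cleanest, most likely the homogeneity argument, and then the remaining bracket computation via \eqref{E:adg} is immediate.
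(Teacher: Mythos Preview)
Your proposal is correct and follows essentially the same route as the paper: both arguments use the hermitian hypothesis to conclude that $\fg^{p,q}_{W,F}=0$ for $|p|>1$, so that $\fs_F^\perp=\bigoplus_q\fg^{-1,q}_{W,F}$ and $N_j\in\fg^{-1,\bullet}_{W,F}$, whence $[\fs_F^\perp,N_j]\subset\fg^{-2,\bullet}_{W,F}=0$ by \eqref{E:adg}. The paper additionally records $|q|\le1$ and uses it to identify $W_{-2}(\fg_\bC)=\fg^{-1,-1}_{W,F}$ before bracketing, but as you observe, the bound on the first index alone already suffices; your justification of that bound (via the $F$--grading being determined by a point of the $G_\bC$--homogeneous $\check D$) is in fact more explicit than what the paper states.
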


\begin{proof}[Proof of Theorem \ref{T:herm}: smoothness]
By Lemma \ref{L:herm}, the function $\log \z : U \to \fs_F^\perp$ of \S\ref{S:loc2} takes value in the centralizer of the $N_1,\ldots,N_k$.  This implies that $\hat\z(t) = \z(t)$ is smooth.  And since $\hat\z(t)$ centralizes the $N_1,\ldots,N_k$ it follows from Corollary \ref{C:Q0} and \S\ref{S:eta0} that 
\begin{equation}\label{E:Qherm}
  Q(\eta_0,\overline{\eta_\infty}) \ = \ 
  Q \left( \hat\z(t)\cdot e_0 \,,\,
  \overline{\hat\z(t)\cdot e_\infty} \right)
\end{equation}
is smooth.  It now follows from the definition \eqref{E:tildeh} that $h$ is smooth.
\end{proof}

\begin{proof}[Proof of Theorem \ref{T:herm}: plurisubharmonicity]
By Lemma \ref{L:hI} and \eqref{E:Qherm} we have 
\[
  Q(\eta_0 , \overline{\lambda \eta_\infty}) \ = \ 
  \bi^{2\sfw-\sfm}\,Q \left( \hat\z(t)\cdot e_0 \,,\,
  N^{\sfm-\sfw}\,\overline {\hat\z(t)\cdot e_0} \right) \,.
\]
By \eqref{E:tildeh} and Theorem \ref{T:descent}
\[
  h \ = \ \bi^{2\sfw-\sfm}\,Q \left( \hat\z(t)\cdot e_0 \,,\,
  N^{\sfm-\sfw}\,\overline {\hat\z(t)\cdot e_0} \right) \,,
\]
modulo rescaling by a positive constant.  Since $N$ polarizes the mixed Hodge structure $(W,F)$, and $\eta(t)$ centralizes $N$, it follows that $N$ also polarizes the mixed Hodge structure $(W,\hat\z(t)\cdot F)$, \cite{MR3474815}.  This yields a strengthening of \eqref{E:Q=hJ}: $h$ is a Hodge metric, for the polarized Hodge structures $F^p(\tGr^W_\ell)$, on all of $X$.  Since this metric has nonpositive curvature \cite{MR0229641, MR0259958}, it follows that $-\log h$ is plurisubharmonic on all of $X$.
\end{proof}

\begin{proof}[Proof of Lemma \ref{L:herm}]
Then the Deligne splitting $\fg_\bC = \op\,\fg^{p,q}_{W,F}$ of \eqref{E:gpq} has the property that 
\[
  \fg^{p,q}_{W,F} \ = \ 0 
  \quad\hbox{ if either $|p| > 1$ or $|q|>1$.}
\]
By \eqref{E:dsfilts} we have 
\[
  W_{-2}(\fg_\bC) \ = \ \fg^{-1,-1}_{W,F} \,,
\]
and by \eqref{E:sFperp} we have 
\[
  \fs_F^\perp \ = \ \fg^{-1,1}_{W,F} \,\op\, \fg^{-1,0}_{W,F}
  \,\op\, \fg^{-1,-1}_{W,F} \,.
\]
It follows from \eqref{E:adg} that 
\[
  [ \fs_F^\perp \,,\, W_{-2}(\fg_\bC) ] \ = \ 0 \,.
\]
By \eqref{E:N}, the nilpotent operators $N_j$ of \S\ref{S:loc1} lie in $W_{-2}(\fg_\bC)$.   
\end{proof}

%----------------------------------------------------------
\section{Criterion for smoothness}
%----------------------------------------------------------

We have seen that $h : X \to \bR$ is smooth when $D$ is hermitian (Theorem \ref{T:herm}).  Smoothness is actually a consequence of the weaker condition that $\fs_F^\perp \subset W_0(\fg_\bC)$.

\begin{theorem} \label{T:sm}
If $\fs_F^\perp \subset W_0(\fg_\bC)$, then $h : X \to \bR$ is smooth.
\end{theorem}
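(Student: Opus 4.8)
\textbf{Proof plan for Theorem \ref{T:sm}.}
The plan is to trace through the proof of Theorem \ref{T:extn} and isolate exactly where smoothness (as opposed to mere continuity) can fail, then show that the hypothesis $\fs_F^\perp \subset W_0(\fg_\bC)$ removes that failure. Recall from \S\ref{S:loc2} that the local expression $\log\z = \sum_{I\subset J}\hat t_I\,f_I$ gives rise to $\hat\z(t) = \exp\big[\sum_{I\subset J}\hat t_I\,\tAd_{\hat\theta_I(t)}(f_I)\big]$, and the only source of non-smoothness is the appearance of the multivalued functions $\ell(t_j) = \log t_j/(2\pi\bi)$ inside the $\tAd_{\hat\theta_I(t)}$ and inside $\theta(t)\cdot e_0$; the $\hat t_I$ factors are holomorphic. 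So the first step is to observe that $\hat\z(t)$ is smooth — in fact real-analytic in $(t,\bar t)$ away from the strata and extends smoothly across them — as soon as the $\tAd_{\hat\theta_I(t)}(f_I)$ are genuinely \emph{holomorphic} (polynomial-free in $\ell(t_j)$), which happens precisely when each $f_I$ commutes with every $N_j$, $j\notin I$, i.e. when $f_I$ centralizes all of $N_1,\ldots,N_k$.

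The second step is to deduce this centralization from the hypothesis. By \eqref{E:N} the $N_j$ lie in $\bigoplus_{p,q\le-1}\fg^{p,q}_{W,F} \subset W_{-2}(\fg_\bC)$, and by definition $f_I$ takes values in $\fz_I\cap\fs_F^\perp \subset \fs_F^\perp$. The hypothesis $\fs_F^\perp\subset W_0(\fg_\bC)$ then gives $[\fs_F^\perp, N_j]\subset [W_0(\fg_\bC), W_{-2}(\fg_\bC)]\subset W_{-2}(\fg_\bC)$, which is not yet zero, so one needs the finer Deligne bigrading. Writing $\fs_F^\perp = \bigoplus_{p<0}\fg^{p,q}_{W,F}$ and combining $p<0$ with the constraint $p+q\le 0$ coming from $W_0$, one finds $\fs_F^\perp\subset \bigoplus_{p<0,\,q\le -p}\fg^{p,q}_{W,F}$; bracketing with $N_j\in\bigoplus_{r,s\le-1}\fg^{r,s}_{W,F}$ and using \eqref{E:adg} lands in bidegrees $(p+r,q+s)$ with $p+r \le -2$ and $q+s\le -p+s \le -p-1$. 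The point is that for these to be nonzero one needs a $(p',q')$ with $p'\le -2$ in $\fg_\bC$ lying in the image of $N_j$ acting on $\fs_F^\perp$, and the weight-filtration/isotropy structure of the limiting mixed Hodge structure — together with the symmetries \eqref{E:symmetries} that force the bigrading to be supported near the ``classical'' region under the hypothesis — forces this image to vanish. (This is the same mechanism as in the proof of Lemma \ref{L:herm}, where $\fs_F^\perp = \fg^{-1,1}\oplus\fg^{-1,0}\oplus\fg^{-1,-1}$ and $[\fs_F^\perp, \fg^{-1,-1}]=0$ by bidegree count; the hypothesis $\fs_F^\perp\subset W_0$ is exactly what is needed to run that count in general.)

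The third step is bookkeeping: once each $f_I$ centralizes all the $N_j$, the $\hat\theta_I(t)$ act trivially by $\tAd$ on $f_I$, so $\hat\z(t) = \exp\big[\sum_{I\subset J}\hat t_I f_I\big]$ with the $\hat t_I$ holomorphic, hence $\hat\z(t)$ is holomorphic on all of $U$ (not merely continuous). Likewise $N_j(e_\infty)=0$ already, and for $e_0$ we invoke Corollary \ref{C:Q0}: since $\hat\z(t)$ now centralizes every $N_j$, the terms with $|a|>0$ in the expansion of $Q(\eta_0,\overline{\eta_\infty})$ from \S\ref{S:eta0} vanish identically, giving $Q(\eta_0,\overline{\eta_\infty}) = Q(\hat\z(t)\cdot e_0,\,\overline{\hat\z(t)\cdot e_\infty})$ on all of $U$, a smooth (indeed real-analytic) function. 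By the definition \eqref{E:tildeh} of $\tilde h$ and Theorem \ref{T:descent}, $h = \tRe\, Q(\eta_0,\overline{\lambda\eta_\infty})$ is then smooth on $X$.

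The main obstacle I expect is the second step — the bidegree argument showing $[\fs_F^\perp, N_j]=0$ under $\fs_F^\perp\subset W_0(\fg_\bC)$. One has to be careful that $W_0(\fg_\bC)$ for the \emph{limiting} mixed Hodge structure, intersected with $\fs_F^\perp$ (which is defined purely from the $p$-grading), really does confine the relevant bidegrees tightly enough; a priori $W_0\cap\fs_F^\perp$ could contain pieces $\fg^{p,q}$ with $p\le-1$ and $q$ as large as $-p$, and one must verify that $N_j$ kills all of these. This should follow from the fact that $N_j$ lowers weight by $2$ and the limiting MHS structure caps the available weights (the symmetries \eqref{E:symmetries} and the vanishing \eqref{E:h} near the top row), but pinning down the precise inequality and confirming there is no surviving bidegree is the delicate part.
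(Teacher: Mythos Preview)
Your second step is the genuine gap, and your worry about it is well-founded: the bracket $[\fs_F^\perp, N_j]$ need \emph{not} vanish under the hypothesis $\fs_F^\perp \subset W_0(\fg_\bC)$. A clean counterexample is the Hodge--Tate degeneration \eqref{E:hd2-5}: there $\fg^{p,q}_{W,F}=0$ unless $p=q\in\{-2,-1,0,1,2\}$, so $\fs_F^\perp = \fg^{-1,-1}\oplus\fg^{-2,-2}\subset W_{-2}\subset W_0$, yet $[\fg^{-1,-1},N_j]\subset\fg^{-2,-2}$ is typically nonzero. The mechanism of Lemma~\ref{L:herm} really does use the hermitian constraint $|p|,|q|\le 1$, not just $\fs_F^\perp\subset W_0$; your bidegree count lands in $\fg^{p',q'}$ with $p'\le-2$, and nothing in the hypothesis kills those pieces. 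Consequently $\hat\z(t)$ is \emph{not} holomorphic on $U$ in general, and your third step (invoking Corollary~\ref{C:Q0} to drop the $|a|>0$ terms) loses its justification.

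The paper's proof avoids this by working one level up in the weight filtration rather than demanding centralization. The hypothesis gives $\log\z\in W_0(\fg_\bC)$; since $N_j\in W_{-2}(\fg_\bC)$, conjugation by $\hat\theta_I(t)$ preserves $W_0$, so $\log\hat\z(t)\in W_0(\fg_\bC)$ as well. Then $\hat\z(t)$ stabilizes $W$, and the same $Q$--isotropy argument behind Lemma~\ref{L:Q0} (with $W$ in place of $W^j$) kills the $|a|>0$ terms in \S\ref{S:eta0}, giving the reduction $Q(\eta_0,\overline{\eta_\infty}) = Q(\hat\z(t)\cdot e_0,\overline{\hat\z(t)\cdot e_\infty})$ identically on $B\cap U$. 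Now $\hat\z(t)$ is still not smooth, but the non-smooth part of $\log\hat\z(t)$ --- the iterated brackets with the $N_j$ coming from $\tAd_{\hat\theta_I(t)}$ --- lies in $W_{-2}(\fg_\bC)$, so $\log\hat\z(t)$ is smooth modulo $W_{-1}$. The final observation is that the map $W_0(\fg_\bC)\to\bC$, $w\mapsto Q(\exp(w)\cdot e_0,\overline{\exp(w)\cdot e_\infty})$, descends to $W_0/W_{-1}$ (again by $Q$--isotropy of $W$, using $e_0\in W_\sfm$ and $e_\infty\in W_{2\sfn-\sfm}$), so only the smooth part of $\log\hat\z$ contributes. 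Your outline can be salvaged by replacing ``$\hat\z$ centralizes $N_j$'' with ``$\hat\z$ stabilizes $W$, and the $W_{-1}$ part of $\log\hat\z$ is invisible to $Q(\,\cdot\,e_0,\overline{\,\cdot\,e_\infty})$.''
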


\begin{remark}
The hypothesis that $\fs_F^\perp \subset W_0(\fg_\bC)$ can not be dropped, see \cite{Robles-pseudocnvx-eg}.
\end{remark}

\begin{proof}
As in the proof of Theorem \ref{T:herm}, it follows from Lemma \ref{L:Q0} and \S\ref{S:eta0} that 
\begin{equation} \nonumber %\label{E:sm}
  Q(\eta_0,\overline{\eta_\infty}) \ = \ 
  Q \left( \hat\z(t)\cdot e_0 \,,\,
  \overline{\hat\z(t)\cdot e_\infty} \right) \,.
\end{equation}

In general, $\hat\z(t)$ is not smooth.  However, $\log\z \in W_0(\fg_\bC)$ and $N_j \in W_{-2}(\fg_\bC)$ implies that $\log\hat\z(t)$ is smooth modulo $W_{-2}(\fg_\bC)$.

As in the proof of Theorem \ref{T:hI}, \eqref{E:e0W}, \eqref{E:einfty} and the fact that $W$ is $Q$--isotropic \eqref{E:QW} imply that the map $W_0(\fg_\bC) \to \bC$ given by 
\[
  w \ \mapsto \ 
  Q \left( \exp(w)\cdot e_0 \,,\,
  \overline{\exp(w)\cdot e_\infty} \right)
\]
descends to a well-defined map $W_0(\fg_\bC)/W_{-1}(\fg_\bC) \to \bC$.  It follows that $Q(\eta_0,\overline{\eta_\infty})$ is smooth.  Smoothness of $h$ follows from the definition \eqref{E:tildeh}.
\end{proof}

%----------------------------------------------------------
\appendix
%----------------------------------------------------------

%----------------------------------------------------------
\section{Hodge diamonds} \label{S:hd}
%----------------------------------------------------------

Given a mixed Hodge structure $(W,F)$ on a vector space $V$ the \emph{Hodge diamond} $\Diamond_{W,F}(V)$ is a visual representation of the Deligne splitting $V_\bC = \op\,V^{p,q}_{W,F}$ (\S\ref{S:gpq}) that is given by a configuration of points in the $(p,q)$--plane that are labeled with $\tdim_\bC\,V^{p,q}_{W,F}$.  This device encodes much of the discrete data in $(W,F)$, and may illuminate some of the constructions here that utilize limiting mixed Hodge structures.  In this appendix we consider two period domains, and list all possible Hodge diamonds coming from limiting mixed Hodge structures on those period domains.  We include the diamonds $\Diamond(\fg)$ and $\Diamond(H)$ of the induced mixed Hodge structures.  In the first example the domain is hermitian (\S\ref{S:hd1}), in the second example the domain is non-hermitian (\S\ref{S:hd2}).

%----------------------------------------------------------
\subsection{Weight $\sfn=1$ and $g=3$} \label{S:hd1}
%----------------------------------------------------------
Suppose that $D$ is the hermitian symmetric period domain parameterizing pure, effective, weight $\sfn=1$ polarized Hodge structures on $V \simeq \bQ^6$.  There are four possible Hodge diamonds, indexed by nonnegative integers $0 \le a,b \in\bZ$ satisfying $a+b=3$.  The diamonds for $V$ and $\fg$ are given by 
\begin{center}
\hsp{0pt} \hfill
\begin{tikzpicture}
  \node [above] at (0.5,1.5) {$\Diamond(V)$};
  \draw [<->] (0,1.5) -- (0,0) -- (1.5,0);
  \draw [gray] (1,0) -- (1,1);
  \draw [gray] (0,1) -- (1,1);
  \draw [fill] (0,0) circle [radius=0.08];
  \node [left] at (0,0) {\scriptsize{$a$}};
  \draw [fill] (0,1) circle [radius=0.08];
  \node [left] at (0,1) {\scriptsize{$b$}};
  \draw [fill] (1,1) circle [radius=0.08];
  \node [right] at (1,1) {\scriptsize{$a$}};
  \draw [fill] (1,0) circle [radius=0.08];
  \node [above right] at (1,0) {\scriptsize{$b$}};
  \node at (0.5,-0.75) {\scriptsize{$a+b=3$}};
  \node at (0,-1) {};
\end{tikzpicture}
\hfill %
\begin{tikzpicture}
  \node [above] at (-1,1.5) {$\Diamond(\fg)$};
  \draw [<->] (0,-1.75) -- (0,1.75);
  \draw [<->] (-1.75,0) -- (1.75,0);
  \draw [gray] (1,-1) -- (1,1);
  \draw [gray] (-1,-1) -- (-1,1);
  \draw [gray] (-1,1) -- (1,1);
  \draw [gray] (-1,-1) -- (1,-1);
  \draw [fill] (-1,1) circle [radius=0.08];
  \node [left] at (-1,1) {\scriptsize{$\half b(b+1)$}};
  \draw [fill] (-1,0) circle [radius=0.08];
  \node [above left] at (-1,0) {\scriptsize{$ab$}};
  \draw [fill] (-1,-1) circle [radius=0.08];
  \node [left] at (-1,-1) {\scriptsize{$\half a(a+1)$}};
  \draw [fill] (0,1) circle [radius=0.08];
  \node [above right] at (0,1) {\scriptsize{$ab$}};
  \draw [fill] (0,0) circle [radius=0.08];
  \node [below] at (0.1,0) {\scriptsize{$a^2+b^2$}};
  \draw [fill] (0,-1) circle [radius=0.08];
  \node [below right] at (0,-1) {\scriptsize{$ab$}};
  \draw [fill] (1,1) circle [radius=0.08];
  \node [right] at (1,1) {\scriptsize{$\half a(a+1)$}};
  \draw [fill] (1,0) circle [radius=0.08];
  \node [above right] at (1,0) {\scriptsize{$ab$}};
  \draw [fill] (1,-1) circle [radius=0.08];
  \node [right] at (1,-1) {\scriptsize{$\half b (b+1)$}};
\end{tikzpicture}
\end{center}
The underlying limiting mixed Hodge structure is pure ($\s=0$) if and and only if $a=0$.  The Hodge diamonds for $H = \tw^3 V$ (of weight $\sfw=3$) are 
\begin{center}
\hsp{0pt} \hfill
\begin{tikzpicture}
  \node [left] at (-0.5,3.2) {$\Diamond(H)$};
  \node [left] at (-0.5,2.6) {$a=0$};
  \node [left] at (-0.5,0.5) {$\sfm=3$};
  \draw [<->] (0,3.75) -- (0,0) -- (3.75,0);
  \draw [gray] (1,0) -- (1,3);
  \draw [gray] (2,0) -- (2,3);
  \draw [gray] (3,0) -- (3,3);
  \draw [gray] (0,1) -- (3,1);
  \draw [gray] (0,2) -- (3,2);
  \draw [gray] (0,3) -- (3,3);
  \draw [fill] (0,3) circle [radius=0.08];
  \node [above right] at (0,3) {\scriptsize{$1$}};
  \node [below right] at (0,3) {\footnotesize{$e_\sfd$}};
  \draw [fill] (1,2) circle [radius=0.08];
  \node [above right] at (1,2) {\scriptsize{$9$}};
  \draw [fill] (2,1) circle [radius=0.08];
  \node [above right] at (2,1) {\scriptsize{$9$}};
  \draw [fill] (3,0) circle [radius=0.08];
  \node [above right] at (3,0) {\scriptsize{$1$}};
  \node [below] at (3,0) {\footnotesize{$e_0=e_\infty$}};
\end{tikzpicture}
\hfill %
\begin{tikzpicture}
  \node [left] at (-0.5,3.2) {$\Diamond(H)$};
  \node [left] at (-0.5,2.6) {$a=1$};
  \node [left] at (-0.5,0.5) {$\sfm=4$};
  \draw [<->] (0,3.75) -- (0,0) -- (3.75,0);
  \draw [gray] (1,0) -- (1,3);
  \draw [gray] (2,0) -- (2,3);
  \draw [gray] (3,0) -- (3,3);
  \draw [gray] (0,1) -- (3,1);
  \draw [gray] (0,2) -- (3,2);
  \draw [gray] (0,3) -- (3,3);
  \draw [fill] (0,2) circle [radius=0.08];
  \node [above right] at (0,2) {\scriptsize{$1$}};
  \node [below right] at (0,2) {\footnotesize{$e_\sfd$}};
  \draw [fill] (1,3) circle [radius=0.08];
  \node [above right] at (1,3) {\scriptsize{$1$}};
  \draw [fill] (1,2) circle [radius=0.08];
  \node [above right] at (1,2) {\scriptsize{$4$}};
  \draw [fill] (1,1) circle [radius=0.08];
  \node [above right] at (1,1) {\scriptsize{$4$}};
  \draw [fill] (2,2) circle [radius=0.08];
  \node [above right] at (2,2) {\scriptsize{$4$}};
  \draw [fill] (2,1) circle [radius=0.08];
  \node [above right] at (2,1) {\scriptsize{$4$}};
  \draw [fill] (2,0) circle [radius=0.08];
  \node [above right] at (2,0) {\scriptsize{$1$}};
  \node [below right] at (2,0) {\footnotesize{$e_\infty$}};
  \draw [fill] (3,1) circle [radius=0.08];
  \node [above right] at (3,1) {\scriptsize{$1$}};
  \node [below right] at (3,1) {\footnotesize{$e_0$}};
\end{tikzpicture}
\hfill \hsp{0pt} \\
\hsp{0pt} \hfill
\begin{tikzpicture}
  \node [left] at (-0.5,3.2) {$\Diamond(H)$};
  \node [left] at (-0.5,2.6) {$a=2$};
  \node [left] at (-0.5,0.5) {$\sfm=5$};
  \draw [<->] (0,3.75) -- (0,0) -- (3.75,0);
  \draw [gray] (1,0) -- (1,3);
  \draw [gray] (2,0) -- (2,3);
  \draw [gray] (3,0) -- (3,3);
  \draw [gray] (0,1) -- (3,1);
  \draw [gray] (0,2) -- (3,2);
  \draw [gray] (0,3) -- (3,3);
  \draw [fill] (0,1) circle [radius=0.08];
  \node [above right] at (0,1) {\scriptsize{$1$}};
  \node [below right] at (0,1) {\footnotesize{$e_\sfd$}};
  \draw [fill] (1,2) circle [radius=0.08];
  \node [above right] at (1,2) {\scriptsize{$4$}};
  \draw [fill] (1,1) circle [radius=0.08];
  \node [above right] at (1,1) {\scriptsize{$4$}};
  \draw [fill] (1,0) circle [radius=0.08];
  \node [above right] at (1,0) {\scriptsize{$1$}};
  \node [below right] at (1,0) {\footnotesize{$e_\infty$}};
  \draw [fill] (2,3) circle [radius=0.08];
  \node [above right] at (2,3) {\scriptsize{$1$}};
  \draw [fill] (2,2) circle [radius=0.08];
  \node [above right] at (2,2) {\scriptsize{$4$}};
  \draw [fill] (2,1) circle [radius=0.08];
  \node [above right] at (2,1) {\scriptsize{$4$}};
  \draw [fill] (3,2) circle [radius=0.08];
  \node [above right] at (3,2) {\scriptsize{$1$}};
  \node [below right] at (3,2) {\footnotesize{$e_0$}};
\end{tikzpicture}
\hfill %
\begin{tikzpicture}
  \node [left] at (-0.5,3.2) {$\Diamond(H)$};
  \node [left] at (-0.5,2.6) {$a=3$};
  \node [left] at (-0.5,0.5) {$\sfm=6$};
  \draw [<->] (0,3.75) -- (0,0) -- (3.75,0);
  \draw [gray] (1,0) -- (1,3);
  \draw [gray] (2,0) -- (2,3);
  \draw [gray] (3,0) -- (3,3);
  \draw [gray] (0,1) -- (3,1);
  \draw [gray] (0,2) -- (3,2);
  \draw [gray] (0,3) -- (3,3);
  \draw [fill] (3,3) circle [radius=0.08];
  \node [above right] at (3,3) {\scriptsize{$1$}};
  \node [below right] at (3,3) {\footnotesize{$e_0$}};
  \draw [fill] (2,2) circle [radius=0.08];
  \node [above right] at (2,2) {\scriptsize{$9$}};
  \draw [fill] (1,1) circle [radius=0.08];
  \node [above right] at (1,1) {\scriptsize{$9$}};
  \draw [fill] (0,0) circle [radius=0.08];
  \node [above right] at (0,0) {\scriptsize{$1$}};
  \node [below] at (0,0) {\footnotesize{$e_\sfd=e_\infty$}};
\end{tikzpicture}
\hfill \hsp{0pt} %
\end{center}

%----------------------------------------------------------
\subsection{Weight $\sfn=2$ and $\bfh(2,\sfh,2)$} \label{S:hd2}
%----------------------------------------------------------

Suppose that $D$ is the (non-hermitian) period domain parameterizing pure, effective, weight $\sfn=2$ polarized Hodge structures on $V$ with Hodge numbers $\bfh = (2,\sfh,2)$.  There are six possible Hodge diamonds.  We have $H = \tw^2V = \fg \ot \bQ(-2)$ and $\sfw=4$.  In the diamonds below, some of the nodes are left unmarked; those missing dimensions may be determined by \eqref{E:dsfilts} and \eqref{E:symmetries}.
\begin{equation} \label{E:hd2-0}
\begin{tikzpicture}[baseline={([yshift=-.5ex]current bounding box.center)}]
  \node [above] at (1.5,2.4) {$\Diamond(V)$}; 		%cp
  \draw [<->] (0,2.75) -- (0,0) -- (2.75,0);		%cp
  \node [left] at (2,-1) {$\sfm=4$};				%cp
  \draw [gray] (1,0) -- (1,2);
  \draw [gray] (2,0) -- (2,2);
  \draw [gray] (0,1) -- (2,1);
  \draw [gray] (0,2) -- (2,2);
  \draw [fill] (0,2) circle [radius=0.08];
  \node [above right] at (0,2) {\scriptsize{$2$}};
  \draw [fill] (1,1) circle [radius=0.08];
  \node [above right] at (1,1) {\scriptsize{$\sfh$}};
  \draw [fill] (2,0) circle [radius=0.08];
  \node [above right] at (2,0) {\scriptsize{$2$}};
\end{tikzpicture}
\hsp{70pt}
\begin{tikzpicture}[baseline={([yshift=-.5ex]current bounding box.center)}]
  \node [above] at (1,2.5) {$\Diamond(H)$};			%cp
  \draw [<->] (-2,2.75) -- (-2,-2) -- (2.75,-2);	%cp
  \draw [gray] (0,-2) -- (0,2);						%cp
  \draw [gray] (-2,0) -- (2,0);						%cp
  \draw [gray] (2,-2) -- (2,2);
  \draw [gray] (1,-2) -- (1,2);
  \draw [gray] (-2,-2) -- (-2,2);
  \draw [gray] (-1,-2) -- (-1,2);
  \draw [gray] (-2,2) -- (2,2);
  \draw [gray] (-2,1) -- (2,1);
  \draw [gray] (-2,-1) -- (2,-1);
  \draw [gray] (-2,-2) -- (2,-2);
  \draw [fill] (-2,2) circle [radius=0.08];
  \node [above right] at (-2,2) {\scriptsize{$1$}};
  \node [below right] at (-2,2) {\footnotesize{$e_\sfd$}};
  \draw [fill] (-1,1) circle [radius=0.08];
  \node [above right] at (-1,1) {\scriptsize{$2\sfh$}};
  \draw [fill] (0,0) circle [radius=0.08];
  \node [above right] at (0,0) {};
  \draw [fill] (1,-1) circle [radius=0.08];
  \node [above right] at (1,-1) {\scriptsize{$2\sfh$}};
  \draw [fill] (2,-2) circle [radius=0.08];
  \node [above right] at (2,-2) {\scriptsize{$1$}};
  \node [below] at (2,-2) {\footnotesize{$e_0=e_\infty$}};
\end{tikzpicture}
\end{equation}
\begin{equation} \label{E:hd2-1}
\begin{tikzpicture}[baseline={([yshift=-.5ex]current bounding box.center)}]
  \node [above] at (1.5,2.4) {$\Diamond(V)$}; 		%cp
  \draw [<->] (0,2.75) -- (0,0) -- (2.75,0);		%cp
  \node [left] at (2,-1) {$\sfm=5$};				%cp
  \draw [gray] (1,0) -- (1,2);
  \draw [gray] (2,0) -- (2,2);
  \draw [gray] (0,1) -- (2,1);
  \draw [gray] (0,2) -- (2,2);
  \draw [fill] (0,2) circle [radius=0.08];
  \node [above right] at (0,2) {\scriptsize{$1$}};
  \draw [fill] (0,1) circle [radius=0.08];
  \node [above right] at (0,1) {\scriptsize{$1$}};
  \draw [fill] (1,2) circle [radius=0.08];
  \node [above right] at (1,2) {\scriptsize{$1$}};
  \draw [fill] (1,1) circle [radius=0.08];
  \node [above] at (1,1) {};
  \draw [fill] (1,0) circle [radius=0.08];
  \node [above right] at (1,0) {\scriptsize{$1$}};
  \draw [fill] (2,1) circle [radius=0.08];
  \node [above right] at (2,1) {\scriptsize{$1$}};
  \draw [fill] (2,0) circle [radius=0.08];
  \node [above right] at (2,0) {\scriptsize{$1$}};
\end{tikzpicture}
\hsp{70pt}
\begin{tikzpicture}[baseline={([yshift=-.5ex]current bounding box.center)}]
  \node [above] at (1,2.5) {$\Diamond(H)$};			%cp
  \draw [<->] (-2,2.75) -- (-2,-2) -- (2.75,-2);	%cp
  \draw [gray] (0,-2) -- (0,2);						%cp
  \draw [gray] (-2,0) -- (2,0);						%cp
  \draw [gray] (2,-2) -- (2,2);
  \draw [gray] (1,-2) -- (1,2);
  \draw [gray] (-2,-2) -- (-2,2);
  \draw [gray] (-1,-2) -- (-1,2);
  \draw [gray] (-2,2) -- (2,2);
  \draw [gray] (-2,1) -- (2,1);
  \draw [gray] (-2,-1) -- (2,-1);
  \draw [gray] (-2,-2) -- (2,-2);
  \draw [fill] (-2,1) circle [radius=0.08];
  \node [above right] at (-2,1) {\scriptsize{$1$}};
  \node [below right] at (-2,1) {\footnotesize{$e_\sfd$}};
  \draw [fill] (-1,2) circle [radius=0.08];
  \node [above right] at (-1,2) {\scriptsize{$1$}};
  \draw [fill] (-1,1) circle [radius=0.08];
  %\node [above] at (-1,1) {\scriptsize{$\sfh-1$}};
  \draw [fill] (-1,0) circle [radius=0.08];
  \node [above] at (-1,0) {\scriptsize{$\sfh-1$}};
  \draw [fill] (-1,-1) circle [radius=0.08];
  \node [above right] at (-1,-1) {\scriptsize{$1$}};
  \draw [fill] (0,1) circle [radius=0.08];
  \node [above right] at (0,1) {};
  \draw [fill] (0,0) circle [radius=0.08];
  \node [above right] at (0,0) {};
  \draw [fill] (0,-1) circle [radius=0.08];
  \node [above right] at (0,-1) {};
  \draw [fill] (1,-1) circle [radius=0.08];
  \node [above] at (1,-1) {};
  \draw [fill] (1,0) circle [radius=0.08];
  \node [above] at (1,0) {};
  \draw [fill] (1,1) circle [radius=0.08];
  \node [above right] at (1,1) {\scriptsize{$1$}};
  \draw [fill] (1,-2) circle [radius=0.08];
  \node [above right] at (1,-2) {\scriptsize{$1$}};
  \node [below right] at (1,-2) {\footnotesize{$e_\infty$}};
  \draw [fill] (2,-1) circle [radius=0.08];
  \node [above right] at (2,-1) {\scriptsize{$1$}};
  \node [below right] at (2,-1) {\footnotesize{$e_0$}};
\end{tikzpicture}
\end{equation}
\begin{equation} \label{E:hd2-2}
\begin{tikzpicture}[baseline={([yshift=-.5ex]current bounding box.center)}]
  \node [above] at (1.5,2.4) {$\Diamond(V)$}; 		%cp
  \draw [<->] (0,2.75) -- (0,0) -- (2.75,0);		%cp
  \node [left] at (2,-1) {$\sfm=6$};				%cp
  \draw [gray] (1,0) -- (1,2);
  \draw [gray] (2,0) -- (2,2);
  \draw [gray] (0,1) -- (2,1);
  \draw [gray] (0,2) -- (2,2);
  \draw [fill] (0,2) circle [radius=0.08];
  \node [above right] at (0,2) {\scriptsize{$1$}};
  \draw [fill] (0,0) circle [radius=0.08];
  \node [above right] at (0,0) {\scriptsize{$1$}};
  \draw [fill] (1,1) circle [radius=0.08];
  \node [above right] at (1,1) {\scriptsize{$\sfh$}};
  \draw [fill] (2,2) circle [radius=0.08];
  \node [above right] at (2,2) {\scriptsize{$1$}};
  \draw [fill] (2,0) circle [radius=0.08];
  \node [above right] at (2,0) {\scriptsize{$1$}};
\end{tikzpicture}
\hsp{70pt}
\begin{tikzpicture}[baseline={([yshift=-.5ex]current bounding box.center)}]
  \node [above] at (1,2.5) {$\Diamond(H)$};			%cp
  \draw [<->] (-2,2.75) -- (-2,-2) -- (2.75,-2);	%cp
  \draw [gray] (0,-2) -- (0,2);						%cp
  \draw [gray] (-2,0) -- (2,0);						%cp
  \draw [gray] (2,-2) -- (2,2);
  \draw [gray] (1,-2) -- (1,2);
  \draw [gray] (-2,-2) -- (-2,2);
  \draw [gray] (-1,-2) -- (-1,2);
  \draw [gray] (-2,2) -- (2,2);
  \draw [gray] (-2,1) -- (2,1);
  \draw [gray] (-2,-1) -- (2,-1);
  \draw [gray] (-2,-2) -- (2,-2);
  \draw [fill] (2,0) circle [radius=0.08];
  \node [above right] at (2,0) {\scriptsize{$1$}};
  \node [below right] at (2,0) {\footnotesize{$e_0$}};
  \draw [fill] (0,2) circle [radius=0.08];
  \node [above right] at (0,2) {\scriptsize{$1$}};
  \draw [fill] (-1,1) circle [radius=0.08];
  \node [above right] at (-1,1) {\scriptsize{$\sfh$}};
  \draw [fill] (-1,-1) circle [radius=0.08];
  \node [above right] at (-1,-1) {\scriptsize{$\sfh$}};
  \draw [fill] (0,0) circle [radius=0.08];
  \node [above right] at (0,0) {};
  \draw [fill] (1,-1) circle [radius=0.08];
  \node [above right] at (1,-1) {\scriptsize{$\sfh$}};
  \draw [fill] (1,1) circle [radius=0.08];
  \node [above right] at (1,1) {\scriptsize{$\sfh$}};
  \draw [fill] (0,-2) circle [radius=0.08];
  \node [above right] at (0,-2) {\scriptsize{$1$}};
  \node [below right] at (0,-2) {\footnotesize{$e_\infty$}};
  \draw [fill] (-2,0) circle [radius=0.08];
  \node [above right] at (-2,0) {\scriptsize{$1$}};
  \node [below right] at (-2,0) {\footnotesize{$e_\sfd$}};
\end{tikzpicture}
\end{equation}
\begin{equation} \label{E:hd2-3}
\begin{tikzpicture}[baseline={([yshift=-.5ex]current bounding box.center)}]
  \node [above] at (1.5,2.4) {$\Diamond(V)$}; 		%cp
  \draw [<->] (0,2.75) -- (0,0) -- (2.75,0);		%cp
  \node [left] at (2,-1) {$\sfm=6$};				%cp
  \draw [gray] (1,0) -- (1,2);
  \draw [gray] (2,0) -- (2,2);
  \draw [gray] (0,1) -- (2,1);
  \draw [gray] (0,2) -- (2,2);
  \draw [fill] (0,1) circle [radius=0.08];
  \node [above right] at (0,1) {\scriptsize{$2$}};
  \draw [fill] (1,2) circle [radius=0.08];
  \node [above right] at (1,2) {\scriptsize{$2$}};
  \draw [fill] (1,1) circle [radius=0.08];
  \node [above] at (1,1) {};
  \draw [fill] (1,0) circle [radius=0.08];
  \node [above right] at (1,0) {\scriptsize{$2$}};
  \draw [fill] (2,1) circle [radius=0.08];
  \node [above right] at (2,1) {\scriptsize{$2$}};
\end{tikzpicture}
\hsp{70pt}
\begin{tikzpicture}[baseline={([yshift=-.5ex]current bounding box.center)}]
  \node [above] at (1,2.5) {$\Diamond(H)$};			%cp
  \draw [<->] (-2,2.75) -- (-2,-2) -- (2.75,-2);	%cp
  \draw [gray] (0,-2) -- (0,2);						%cp
  \draw [gray] (-2,0) -- (2,0);						%cp
  \draw [gray] (2,-2) -- (2,2);
  \draw [gray] (1,-2) -- (1,2);
  \draw [gray] (-2,-2) -- (-2,2);
  \draw [gray] (-1,-2) -- (-1,2);
  \draw [gray] (-2,2) -- (2,2);
  \draw [gray] (-2,1) -- (2,1);
  \draw [gray] (-2,-1) -- (2,-1);
  \draw [gray] (-2,-2) -- (2,-2);
  \draw [fill] (2,0) circle [radius=0.08];
  \node [above right] at (2,0) {\scriptsize{$1$}};
  \node [below right] at (2,0) {\footnotesize{$e_0$}};
  \draw [fill] (0,2) circle [radius=0.08];
  \node [above right] at (0,2) {\scriptsize{$1$}};
  \draw [fill] (-1,1) circle [radius=0.08];
  \node [above right] at (-1,1) {\scriptsize{$4$}};
  \draw [fill] (-1,0) circle [radius=0.08];
  \node [above left] at (-1,0) {};
  \draw [fill] (-1,-1) circle [radius=0.08];
  \node [above right] at (-1,-1) {\scriptsize{$4$}};
  \draw [fill] (0,1) circle [radius=0.08];
  \node [above right] at (0,1) {};
  \draw [fill] (0,0) circle [radius=0.08];
  \node [above right] at (0,0) {};
  \draw [fill] (0,-1) circle [radius=0.08];
  \node [above right] at (0,-1) {};
  \draw [fill] (1,-1) circle [radius=0.08];
  \node [above right] at (1,-1) {\scriptsize{$4$}};
  \draw [fill] (1,0) circle [radius=0.08];
  \node [above] at (1,0) {};
  \draw [fill] (1,1) circle [radius=0.08];
  \node [above right] at (1,1) {\scriptsize{$4$}};
  \draw [fill] (0,-2) circle [radius=0.08];
  \node [above right] at (0,-2) {\scriptsize{$1$}};
  \node [below right] at (0,-2) {\footnotesize{$e_\infty$}};
  \draw [fill] (-2,0) circle [radius=0.08];
  \node [above right] at (-2,0) {\scriptsize{$1$}};
  \node [below right] at (-2,0) {\footnotesize{$e_\sfd$}};
\end{tikzpicture}
\end{equation}
\begin{equation} \label{E:hd2-4}
\begin{tikzpicture}[baseline={([yshift=-.5ex]current bounding box.center)}]
  \node [above] at (1.5,2.4) {$\Diamond(V)$}; 		%cp
  \draw [<->] (0,2.75) -- (0,0) -- (2.75,0);		%cp
  \node [left] at (2,-1) {$\sfm=7$};				%cp
  \draw [gray] (1,0) -- (1,2);
  \draw [gray] (2,0) -- (2,2);
  \draw [gray] (0,1) -- (2,1);
  \draw [gray] (0,2) -- (2,2);
  \draw [fill] (0,1) circle [radius=0.08];
  \node [above right] at (0,1) {\scriptsize{$1$}};
  \draw [fill] (0,0) circle [radius=0.08];
  \node [above right] at (0,0) {\scriptsize{$1$}};
  \draw [fill] (1,2) circle [radius=0.08];
  \node [above right] at (1,2) {\scriptsize{$1$}};
  \draw [fill] (1,1) circle [radius=0.08];
  \node [above] at (1,1) {};
  \draw [fill] (1,0) circle [radius=0.08];
  \node [above right] at (1,0) {\scriptsize{$1$}};
  \draw [fill] (2,2) circle [radius=0.08];
  \node [above right] at (2,2) {\scriptsize{$1$}};
  \draw [fill] (2,1) circle [radius=0.08];
  \node [above right] at (2,1) {\scriptsize{$1$}};
\end{tikzpicture}
\hsp{70pt}
\begin{tikzpicture}[baseline={([yshift=-.5ex]current bounding box.center)}]
  \node [above] at (1,2.5) {$\Diamond(H)$};			%cp
  \draw [<->] (-2,2.75) -- (-2,-2) -- (2.75,-2);	%cp
  \draw [gray] (0,-2) -- (0,2);						%cp
  \draw [gray] (-2,0) -- (2,0);						%cp
  \draw [gray] (2,-2) -- (2,2);
  \draw [gray] (1,-2) -- (1,2);
  \draw [gray] (-2,-2) -- (-2,2);
  \draw [gray] (-1,-2) -- (-1,2);
  \draw [gray] (-2,2) -- (2,2);
  \draw [gray] (-2,1) -- (2,1);
  \draw [gray] (-2,-1) -- (2,-1);
  \draw [gray] (-2,-2) -- (2,-2);
  \draw [fill] (2,1) circle [radius=0.08];
  \node [above right] at (2,1) {\scriptsize{$1$}};
  \node [below right] at (2,1) {\footnotesize{$e_0$}};
  \draw [fill] (1,2) circle [radius=0.08];
  \node [above right] at (1,2) {\scriptsize{$1$}};
  \draw [fill] (-1,1) circle [radius=0.08];
  \node [above right] at (-1,1) {\scriptsize{$1$}};
  \draw [fill] (-1,0) circle [radius=0.08];
  \node [above] at (-1,0) {\scriptsize{$\sfh-1$}};
  \draw [fill] (-1,-1) circle [radius=0.08];
  %\node [above] at (-1,-1) {\scriptsize{$\sfh-1$}};
  \draw [fill] (0,1) circle [radius=0.08];
  \node [above right] at (0,1) {};
  \draw [fill] (0,0) circle [radius=0.08];
  \node [above right] at (0,0) {};
  \draw [fill] (0,-1) circle [radius=0.08];
  \node [above right] at (0,-1) {};
  \draw [fill] (1,-1) circle [radius=0.08];
  \node [above] at (1,-1) {};
  \draw [fill] (1,0) circle [radius=0.08];
  \node [above] at (1,0) {};
  \draw [fill] (1,1) circle [radius=0.08];
  \node [above] at (1,1) {};
  \draw [fill] (-1,-2) circle [radius=0.08];
  \node [above right] at (-1,-2) {\scriptsize{$1$}};
  \node [below right] at (-1,-2) {\footnotesize{$e_\infty$}};
  \draw [fill] (-2,-1) circle [radius=0.08];
  \node [above right] at (-2,-1) {\scriptsize{$1$}};
  \node [below right] at (-2,-1) {\footnotesize{$e_\sfd$}};
\end{tikzpicture}
\end{equation}
\begin{equation}\label{E:hd2-5}
\begin{tikzpicture}[baseline={([yshift=-.5ex]current bounding box.center)}]
  \node [above] at (1.5,2.4) {$\Diamond(V)$}; 		%cp
  \draw [<->] (0,2.75) -- (0,0) -- (2.75,0);		%cp
  \node [left] at (2,-1) {$\sfm=8$};				%cp
  \draw [gray] (1,0) -- (1,2);
  \draw [gray] (2,0) -- (2,2);
  \draw [gray] (0,1) -- (2,1);
  \draw [gray] (0,2) -- (2,2);
  \draw [fill] (0,0) circle [radius=0.08];
  \node [above right] at (0,0) {\scriptsize{$2$}};
  \draw [fill] (1,1) circle [radius=0.08];
  \node [above right] at (1,1) {\scriptsize{$\sfh$}};
  \draw [fill] (2,2) circle [radius=0.08];
  \node [above right] at (2,2) {\scriptsize{$2$}};
\end{tikzpicture}
\hsp{70pt}
\begin{tikzpicture}[baseline={([yshift=-.5ex]current bounding box.center)}]
  \node [above] at (1,2.5) {$\Diamond(H)$};			%cp
  \draw [<->] (-2,2.75) -- (-2,-2) -- (2.75,-2);	%cp
  \draw [gray] (0,-2) -- (0,2);						%cp
  \draw [gray] (-2,0) -- (2,0);						%cp
  \draw [gray] (2,-2) -- (2,2);
  \draw [gray] (1,-2) -- (1,2);
  \draw [gray] (-2,-2) -- (-2,2);
  \draw [gray] (-1,-2) -- (-1,2);
  \draw [gray] (-2,2) -- (2,2);
  \draw [gray] (-2,1) -- (2,1);
  \draw [gray] (-2,-1) -- (2,-1);
  \draw [gray] (-2,-2) -- (2,-2);
  \draw [fill] (-2,-2) circle [radius=0.08];
  \node [above right] at (-2,-2) {\scriptsize{$1$}};
  \node [below] at (-2,-2) {\footnotesize{$e_\sfd=e_\infty$}};
  \draw [fill] (-1,-1) circle [radius=0.08];
  \node [above right] at (-1,-1) {\scriptsize{$2\sfh$}};
  \draw [fill] (0,0) circle [radius=0.08];
  \node [above right] at (0,0) {};
  \draw [fill] (1,1) circle [radius=0.08];
  \node [above right] at (1,1) {\scriptsize{$2\sfh$}};
  \draw [fill] (2,2) circle [radius=0.08];
  \node [above right] at (2,2) {\scriptsize{$1$}};
  \node [below right] at (2,2) {\footnotesize{$e_0$}};
\end{tikzpicture}
\end{equation}

%----------------------------------------------------------
\section{Weight filtration lemma}
%----------------------------------------------------------

We will need, at two points in the paper, the following technical lemma on weight filtrations.

Let $D$ be a period domain (or Mumford--Tate domain) parameterizing pure, $Q$--polarized Hodge structures on a rational vector space $H$.  Suppose that the Hodge structures are of weight $\sfw$ and have first Hodge number $h^{\sfw,0} = 1$.  Let $(W,F)$ be a limiting Hodge structure polarized by a cone $\s = \tspan_{\bR_{>0}}\{N_1,N_2\}$ of commuting nilpotent operators.  Fix a basis $\{e_0,e_1,\ldots,e_\sfd\}$ of $H_\bC$ as in \S\ref{S:framing}, and define $e_\infty$ and $\sfm$ as in \S\ref{S:einfty}.

We have $W = W(N)[-\sfw]$, cf.~\cite[(2.26)]{MR840721}.  Let $W^1 = W(N_1)[-\sfw]$.  

\begin{lemma} \label{L:m}
There exists $\sfw \le \sfm_1 \le \sfm$ so that $e_0 \in W^1_{\sfm_1}(H_\bC)$ but $e_0 \not\in W^1_{\sfm_1-1}(H_\bC)$; and $e_\infty \in W^1_{2\sfw-\sfm_1}(H_\bC)$ but $e_\infty \not\in W^1_{2\sfw-\sfm_1-1}(H_\bC)$.
\end{lemma}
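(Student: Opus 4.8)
The plan is to deduce the lemma from two standard structural facts together with the linear algebra of monodromy weight filtrations. Set $N = N_1+N_2$, so $W = W(N)[-\sfn]$, and write $W^1 = W(N_1)[-\sfn]$. First, by Remark~\ref{R:anotherchoice} applied to the cone $\s = \tspan_{\bR_{>0}}\{N_1,N_2\}$, both $N_1$ and $N_2$ lie in $\fg^{-1,-1}_{W,F}$: they commute, are defined over $\bQ$, and each sends $W_\ell\to W_{\ell-2}$ and $H^{p,q}_{W,F}\to H^{p-1,q-1}_{W,F}$. Hence $N_2$ preserves $W^1$, and --- since $\tker N_1^j$ and $\tim N_1^j$ are each sums of Deligne pieces of $(W,F)$ --- the filtration $W^1$ is compatible with the Deligne bigrading of $(W,F)$. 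Second, by Cattani--Kaplan--Schmid (\cf~\cite{MR840721}), $W$ is the monodromy weight filtration of $N_2$ relative to $W^1$, so $N_2$ induces on each $\tGr^{W^1}_\ell$ a nilpotent operator whose monodromy weight filtration, centered at $\ell$, is the filtration induced by $W$. I take $\sfm_1$ to be the $W^1$-weight of $e_0$, which is the middle clause of the lemma, and set $q_1 = \sfm_1-\sfn$; it remains to show $\sfn\le\sfm_1$, $\sfm_1\le\sfm$, and that $e_\infty$ has $W^1$-weight exactly $2\sfn-\sfm_1$.

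For $\sfn\le\sfm_1$: note $\overline{\lambda e_\infty} = e_\sfd$ spans $H^{0,2\sfn-\sfm}_{W,F}$ by \eqref{E:H}, so $N_1(e_\sfd)\in H^{-1,2\sfn-\sfm-1}_{W,F} = 0$ by effectivity of $(W,F)$; hence $e_\sfd\in\tker N_1\subseteq W^1_\sfn$, and since $W^1$ is real, $e_\infty$ and $e_\sfd$ have a common $W^1$-weight $\ell_\infty\le\sfn$. As $Q(e_0,e_\sfd) = 1$ by \eqref{E:Q} and $W^1$ is $Q$-isotropic (\eqref{E:Wj}), we get $\sfm_1+\ell_\infty\ge 2\sfn$, so $\sfm_1\ge 2\sfn-\ell_\infty\ge\sfn$. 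For $\sfm_1\le\sfm$: from $N_1\in\fg^{-1,-1}_{W,F}$ and $e_0\in H^{\sfn,\sfm-\sfn}_{W,F}$ one has $N_1^{\sfm-\sfn+1}(e_0)\in H^{2\sfn-\sfm-1,-1}_{W,F} = 0$, again by effectivity; while hard Lefschetz for $W^1$ gives an isomorphism $N_1^{q_1}\colon\tGr^{W^1}_{\sfm_1}\xrightarrow{\ \sim\ }\tGr^{W^1}_{2\sfn-\sfm_1}$, so $N_1^{q_1}(e_0)\ne 0$; thus the $N_1$-string on $e_0$ has length in $[q_1,\sfm-\sfn]$, forcing $\sfm_1\le\sfm$.

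The essential point is the assertion about $e_\infty$, which I would prove by producing an explicit nonzero multiple of $e_\infty$ of the correct $W^1$-weight, namely
\[
  \x \ = \ N_1^{q_1}\,N_2^{\sfm-\sfm_1}(e_0)\,,
\]
well defined since $q_1\ge 0$ and $\sfm-\sfm_1\ge 0$. Then: (i) $\x\in H^{2\sfn-\sfm,0}_{W,F} = \bC e_\infty$, since $N_1,N_2\in\fg^{-1,-1}_{W,F}$ and $q_1+(\sfm-\sfm_1) = \sfm-\sfn$; (ii) $\x\in W^1_{2\sfn-\sfm_1}$, since $N_2$ preserves $W^1$ while $N_1^{q_1}$ drops the $W^1$-weight by $2q_1 = \sfm_1-(2\sfn-\sfm_1)$; and (iii) $\x\ne 0$, in fact $\x\notin W^1_{2\sfn-\sfm_1-1}$. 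For (iii): because $W^1$ is compatible with the Deligne bigrading and $e_0\in H^{\sfn,\sfm-\sfn}_{W,F}$ sits at $W$-weight $\sfm$, the image $\bar e_0\in\tGr^{W^1}_{\sfm_1}$ has induced $W$-weight exactly $\sfm$; by the relative-weight-filtration fact, $N_2$ acts on $\tGr^{W^1}_{\sfm_1}$ as the Lefschetz operator for this induced grading (centered at $\sfm_1$), so $N_2^{\sfm-\sfm_1}(\bar e_0)\ne 0$, i.e.\ $N_2^{\sfm-\sfm_1}(e_0)\in W^1_{\sfm_1}\setminus W^1_{\sfm_1-1}$; applying $N_1^{q_1}\colon\tGr^{W^1}_{\sfm_1}\xrightarrow{\sim}\tGr^{W^1}_{2\sfn-\sfm_1}$ then yields $\x\notin W^1_{2\sfn-\sfm_1-1}$. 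By (i) and (iii), $e_\infty$ is a nonzero multiple of $\x$; with (ii) and (iii) this gives $e_\infty\in W^1_{2\sfn-\sfm_1}\setminus W^1_{2\sfn-\sfm_1-1}$, completing the lemma.

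I expect (iii) to be the main obstacle --- the sole place where more than bookkeeping with weight filtrations is needed --- and specifically its two Hodge-theoretic inputs: that $W = W(N_1+N_2)[-\sfn]$ is the $N_2$-relative weight filtration of $W^1 = W(N_1)[-\sfn]$, and that $W^1$ is compatible with the Deligne bigrading of $(W,F)$ (this is what lets one read off the induced $W$-weight of $\bar e_0$ from the Deligne type of $e_0$). Both belong to the Cattani--Kaplan--Schmid analysis of several-variable nilpotent orbits; a reader preferring a more hands-on route could instead note that $(W^1,F)$ and the induced structure on $\tGr^{W^1}_{\sfm_1}$ are mixed Hodge structures sharing the filtration $F$ and invoke strictness of morphisms of mixed Hodge structures, but this is longer and rests on the same facts. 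Everything else is the linear algebra of monodromy weight filtrations, the $Q$-isotropy \eqref{E:Wj}, and the effectivity (nonnegativity of Deligne indices) of $(W,F)$.
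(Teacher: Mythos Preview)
Your proof is correct and takes a genuinely different route from the paper's. The paper invokes the several-variable $\tSL_2$--orbit theorem of \cite{MR840721} to replace the pair $(N_1,N_2)$ by $(N_1,\hat N_2)$ generating two commuting $\fsl_2$--triples; the lemma is then read off from standard $\fsl_2\times\fsl_2$ representation theory (the highest weight vector $e_0$ determines $\sfm_1$, and $e_\infty$ is a multiple of $\hat N_2^{\sfm-\sfm_1}N_1^{\sfm_1-\sfn}e_0$), with $Q$--isotropy of $W^1$ supplying the final nonvanishing $e_\infty\notin W^1_{2\sfn-\sfm_1-1}$. You instead stay with the original $(N_1,N_2)$ and use two strictly weaker inputs: (a) that $W^1$ is compatible with the Deligne bigrading of $(W,F)$, which is elementary once $N_1\in\fg^{-1,-1}_{W,F}$ (the torus scaling each $H^{p,q}_{W,F}$ conjugates $N_1$ to a scalar multiple, so fixes $W(N_1)$); and (b) the Cattani--Kaplan relative weight filtration identity $W=M(N_2,W^1)$, which is purely algebraic and logically prior to the $\tSL_2$--orbit theorem. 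Your witness $\xi=N_1^{\sfm_1-\sfn}N_2^{\sfm-\sfm_1}(e_0)$ plays exactly the role of the paper's $\hat N_2^{\sfm-\sfm_1}N_1^{\sfm_1-\sfn}(e_0)$, with the relative weight filtration property standing in for the second $\fsl_2$. The paper's argument is shorter once the $\tSL_2$--orbit theorem is granted; yours uses a lighter toolkit and makes the mechanism---why the two weight filtrations interact as claimed---more transparent.
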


\begin{proof}
The several-variable $\tSL_2$--orbit theorem \cite{MR840721} associates to $(W,F)$ and the ordered pair $\{N_1,N_2\}$ a mixed Hodge structure $(W,\tilde F)$ and a pair of commuting $\fsl_2$'s with nilpotent operators $\{N_1 , \hat N_2\}$ so that: 
\begin{i_list}
\item
the cone $\s$ polarizes $(W,\tilde F)$; 
\item \label{i:hat}
the cone $\hat\s = \tspan_{\bR_{>0}}\{N_1,\hat N_2\}$ polarizes $(W,\tilde F)$; and 
\item
$\tilde F = g \cdot F$ for some $g \in G_\bC$ which commutes with both $N$ and $N_1$.  
\end{i_list}
The second item implies $W = W(N_1+\hat N_2)[-\sfw]$.  The third item implies $g$ preserves both weight filtrations $W$ and $W^1$.  So without loss of generality, we may assume that $F = \tilde F$.    Then \ref{i:hat} implies $e_\infty$ is a nonzero multiple of $\hat N{}^{\sfm-\sfw} e_0$.  

Since $N_1,\hat N_2 \in \fg^{-1,-1}_{W,F}$, $e_0 \in F^\sfw(H_\bC)$ is necessarily a highest weight vector of both $\fsl_2$'s.  Now it follows from standard $\fsl_2$--representation theory that there exists $\sfw \le \sfm_1 \le \sfm$ such that $e_0 \in W^1_{\sfm_1}(H_\bC)$ and $e_0 \not\in W^1_{\sfm_1-1}(H_\bC)$; and $\hat N{}^{\sfm-\sfw} e_0 = \hat N{}_2^{\sfm-\sfm_1}\,N_1^{\sfm_1-\sfw} e_0$.  The latter is an element of $W^1_{2\sfw-\sfm_1}(H_\bC)$, and so implies $e_\infty \in W^1_{2\sfw-\sfm_1}(H_\bC)$.

Finally $Q(e_0,\overline{e_\infty}) \not=0$ and the fact the the weight polarization $W^1$ is $Q$--isotropic
\[
  Q(W^1_k(H) , W^1_\ell(H)) \ = \ 0
  \quad\hbox{ for all } \quad k+\ell < \sfw\,,
\]
implies $e_\infty \not \in W^1_{2\sfw-\sfm_1-1}(H_\bC)$.
\end{proof}

%----------------------------------------------------------
%\bibliography{../../../../LaTex/refs.bib}
%\bibliographystyle{alpha}
\def\cprime{$'$} \def\Dbar{\leavevmode\lower.6ex\hbox to 0pt{\hskip-.23ex
  \accent"16\hss}D}

%----------------------------------------------------------

%----------------------------------------------------------
\end{document}